\newtheorem{thm}{Theorem}[section]
\newtheorem{cor}[thm]{Corollary}
\newtheorem{lem}[thm]{Lemma}
\newtheorem{prop}[thm]{Proposition}
\newtheorem{prob}[thm]{Problem}
\newtheorem{q}[thm]{Question}
\theoremstyle{definition}
\newtheorem{defn}[thm]{Definition}
\theoremstyle{remark}
\newtheorem{rem}[thm]{Remark}
\newtheorem{ex}[thm]{Example}
\renewcommand{\kappa }{\varkappa}
\renewcommand{\phi}{\varphi}
\renewcommand{\d }{{\rm d} }
\newcommand{\dl}{\d_{\lambda}}
\newcommand{\G }{\Gamma (G, X\sqcup \mathcal H)}
\newcommand{\dxh }{\d_{X\cup\mathcal H}\, }
\newcommand{\Hl }{\{ H_\lambda \} _{\lambda \in \Lambda } }
\newcommand{\e }{\varepsilon }
\newcommand{\Lab}{{\bf Lab}}
\newcommand{\h}{\hookrightarrow _h }
\newcommand{\AH}{\mathcal A(H)}
\newcommand{\Ind}{{\rm Ind}}
\newcommand{\AG}{\mathcal{A}(G)}
\newcommand{\MG}{\mathcal{M}(G)}
\newcommand{\AHi}{\mathcal A (H_1)\times \cdots \times \mathcal A(H_n)}
\newcommand{\AHl}{\times_{\lambda\in \Lambda} \mathcal A (H_\lambda)}
\newcommand{\Hi}{\{ H_1, \ldots, H_n\}}
\newcommand{\diam}{{\rm diam}}
\begin{document}

\title{Extending group actions on metric spaces}
\author{C. Abbott, D. Hume, D. Osin}
\date{}
\maketitle

\begin{abstract}
We address the following natural extension problem for group actions: Given a group $G$, a subgroup $H\le G$, and an action of $H$ on a metric space, when is it possible to extend it to an action of the whole group $G$ on a (possibly different) metric space? When does such an extension preserve interesting properties of the original action of $H$? We begin by formalizing this problem and present a construction of an induced action which behaves well when $H$ is hyperbolically embedded in $G$. Moreover, we show that induced actions can be used to characterize hyperbolically embedded subgroups. We also obtain some results  for elementary amenable groups.
\end{abstract}

\tableofcontents

%%%%%%%%%%%%%%%%%%%%%%%%%%%%%%%%%%%%%%%%%%%%%%%%%%%%%%%%%%%%%%%%%%

\section{Introduction}

%%%%%%%%%%%%%%%%%%%%%%%%%%%%%%%%%%%%%%%%%%%%%%%%%%%%%%%%%%%%%%%%%%

\paragraph{1.1. The extension problem for group actions on metric spaces.} All actions of groups on metric spaces considered in this paper are assumed to be isometric by default. Thus, by a \emph{group action on a metric space} we mean a triple $(G,S,\phi)$, where $G$ is a group, $S$ is a metric space, and $\phi $ is a homomorphism from  $G$ to ${\rm Isom}(S)$, the group of isometries of $S$. If no confusion is possible, we omit $\phi $ from the notation and denote the action $(G,S,\phi)$ by $G\curvearrowright S$.

The main goal of this paper is to address the following natural question.

\begin{q}[Extension Problem]\label{EPintr}
Given a group $G$, a subgroup $H\le G$, and an action $H\curvearrowright R$ of $H$ on a metric space $R$, does there exist an action of $G$ on a (possibly different) metric space that extends $H\curvearrowright R$?
\end{q}

There are several ways to formalize the notion of an extension. Since our interest in this question arose in the context of geometric group theory, we accept a ``coarse" definition, which focuses on large scale invariants of groups and spaces.

Let $H$ be a group acting on metric spaces, $(R,\d_R)$ and $(S,\d_S)$. Recall that a map $f\colon R\to S$ is said to be
\begin{enumerate}
\item[--] \emph{coarsely $H$--equivariant} if for all $x\in R$, we have
\begin{equation}\label{defce}
\sup_{h\in H}\d_S(f(hx),hf(x))<\infty;
\end{equation}
\item[--] \emph{a quasi-isometric embedding} if there is a constant $C$ such that for all $x,y\in R$ we have
\begin{equation}
\label{defqi}\frac1C\d_R(x,y)-C\le \d_S(f(x),f(y))\le C\d_R(x,y)+C;
\end{equation}
if, in addition, $S$ is contained in the $\e$--neighborhood of $f(R)$ for some constant $\e$, $f$ is called a \emph{quasi-isometry}.
\end{enumerate}

\begin{defn}
Let $H$ be a subgroup of a group $G$ and let $H\curvearrowright R$ be an action of $H$ on a metric space $R$. An action $G\curvearrowright S$ of $G$ on a metric space $S$ is an \emph{extension} of $H\curvearrowright R$ if there exists a coarsely $H$--equivariant quasi-isometric embedding $R\to S$. We say that the \emph{extension problem is solvable for the pair} $H\le G$ if the answer to Question \ref{EPintr} is affirmative for every action of $H$ on a metric space.
\end{defn}

%The extension problem is natural enough to be of independent interest and is also motivated by applications to the study of hyperbolic structures on groups, see \cite{ABO}. 
In many cases the answer is negative, the most striking example being the following.

\begin{ex}
Every countable group $H$ embeds in $Sym(\mathbb N)$, the group of all permutations of natural numbers. However it is known that every action of $Sym(\mathbb N)$ on a metric space has bounded orbits \cite{Cor}. Thus no action of $H$ with unbounded orbits can be extended to an action of $Sym(\mathbb N)$
\end{ex}

On the other hand, there are many examples of pairs $H\le G$ for which the answer is obviously affirmative.

\begin{ex} \label{ex2}
\begin{enumerate}
\item[(a)] The extension problem for a pair $H\le G$ is solvable whenever $H$ is finite. Indeed, for any action of a finite group $H$ on a metric space $R$, the trivial action of $G$ on $R$ is an extension of $H\curvearrowright R$.

\item[(b)] Let $H$ be a retract of $G$ and let $\rho\colon G\to H$ be a homomorphism such that $\rho\vert_H\equiv id_H$. It is easy to see that for every action $H\curvearrowright R$, the action of $G$ on the same metric space $R$ defined by $gx=\rho(g)x$ for all $g\in G$ and $x\in R$ is an extension of $H\curvearrowright R$. Indeed the identity map $R\to R$ is an $H$-equivariant isometry.
\end{enumerate}
\end{ex}

If the group $G$ is finitely generated, solvability of the extension problem imposes strong restrictions on the geometry of the embedding $H\le G$.

\begin{prop}[Prop. \ref{undist}]\label{undist-intr}
Let $G$ be a finitely generated group. Suppose that the extension problem is solvable for some $H\le G$. Then $H$ is finitely generated and undistorted in $G$.
\end{prop}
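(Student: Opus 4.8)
The plan is to isolate one elementary estimate and then to deduce both conclusions from it by testing the hypothesis on two well-chosen actions of $H$. Fix once and for all a finite generating set $X$ of $G$ with word metric $\d_X$. The estimate I want, call it $(\ast)$, is: \emph{if $\d_R$ is a left-invariant metric on $H$ and the action of $H$ on $(H,\d_R)$ by left translations is extended by some action $G\curvearrowright S$, then $\d_R(1,h)\le A\,\d_X(1,h)+B$ for all $h\in H$ and suitable constants $A,B\ge0$.} To prove $(\ast)$ I would take the coarsely $H$-equivariant quasi-isometric embedding $\iota\colon(H,\d_R)\to S$ furnished by the extension, set $s_0=\iota(1)$, note that coarse equivariance at the point $1$ gives $B_0:=\sup_{h\in H}\d_S(\iota(h),hs_0)<\infty$, and combine two facts: on one side the triangle inequality together with the quasi-isometric embedding inequality give $\d_S(s_0,hs_0)\ge\frac1C\d_R(1,h)-C-B_0$ (with $C$ the quasi-isometry constant); on the other side, \emph{because $X$ is finite}, the orbit map $g\mapsto gs_0$ from $(G,\d_X)$ to $S$ is $D$-Lipschitz with $D=\max_{a\in X\cup X^{-1}}\d_S(s_0,as_0)$, so $\d_S(s_0,hs_0)\le D\,\d_X(1,h)$. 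Comparing gives $(\ast)$ with $A=CD$ and $B=C(C+B_0)$.

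To get finite generation I would argue by contradiction. If $H$ is not finitely generated then, being countable, it can be written as a strictly increasing union $H=\bigcup_{n\ge0}H_n$ of finitely generated subgroups with $H_0=\{1\}$. Set $\rho(h)=\min\{n:h\in H_n\}$, so that $\rho(h^{-1})=\rho(h)$ and $\rho(gh)\le\max\{\rho(g),\rho(h)\}$, and for each $n\ge1$ pick $t_n\in H_n\setminus H_{n-1}$ with $m_n:=\d_X(1,t_n)$ minimal. I would then choose a non-decreasing $f\colon\mathbb Z_{\ge0}\to\mathbb R_{\ge0}$ with $f(0)=0$, $f(n)>0$ for $n\ge1$ and $f(n)\ge n(m_n+1)$ for all $n$ --- say $f(0)=0$ and $f(n)=\max_{1\le j\le n}j(m_j+1)$. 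Since $f$ is non-decreasing and $\rho(gh)\le\max\{\rho(g),\rho(h)\}$, the rule $\ell(h):=f(\rho(h))$ is a length function on $H$, so $\d_f(g,h):=\ell(g^{-1}h)$ is a left-invariant metric (genuinely a metric, since $H_0=\{1\}$) and $H\curvearrowright(H,\d_f)$ is an isometric action. By solvability it is extended by an action of $G$, so $(\ast)$ forces $f(\rho(h))\le A\,\d_X(1,h)+B$ for all $h$; evaluating at $t_n$ gives $f(n)\le Am_n+B$, contradicting $f(n)\ge nm_n+n>Am_n+B$ for $n>\max\{A,B\}$. Hence $H$ is finitely generated.

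For undistortedness I would fix a finite generating set $Y$ of $H$, with word metric $\d_Y$, and apply solvability to the isometric action $H\curvearrowright(H,\d_Y)$; by $(\ast)$ one gets $\d_Y(1,h)\le A\,\d_X(1,h)+B$ for all $h\in H$. The reverse inequality $\d_X(1,h)\le L\,\d_Y(1,h)$, with $L=\max_{y\in Y\cup Y^{-1}}\d_X(1,y)<\infty$, is immediate since $Y$ is a finite subset of $G$. Together these say exactly that $(H,\d_Y)\hookrightarrow(G,\d_X)$ is a quasi-isometric embedding, i.e.\ $H$ is undistorted in $G$.

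I expect the finite-generation step to be the only substantive one: the trick is to realize that an infinitely generated $H$ carries an action --- the ultrametric $\d_f$ attached to an exhausting chain of finitely generated subgroups, with a gauge $f$ that climbs fast enough along the chain --- whose orbit grows strictly faster than any orbit of an action of the finitely generated group $G$ can, making $(\ast)$ impossible. The rest is just manipulation of the quasi-isometric embedding inequalities and the Lipschitz property of orbit maps of finitely generated groups.
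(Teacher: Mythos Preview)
Your proof is correct and follows the same overall strategy as the paper: both isolate the estimate $(\ast)$ (which is exactly the paper's Lemma \ref{fgH}, proved in the same way via Lemma \ref{OML}) and then test it against a suitably chosen left-invariant metric on $H$.

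The only notable difference is in the finite-generation step. The paper picks an infinite generating set $\{y_i\}$ with $y_{i+1}\notin\langle y_1,\ldots,y_i\rangle$, defines a weight function by $w(y_i)=M_i$ with $M_i/|y_i|_X\to\infty$, and shows that the associated weight metric $\d_w$ violates $(\ast)$ because $\d_w(1,y_i)\ge M_i$. You instead use the ultrametric $\d_f$ coming from an exhausting chain of finitely generated subgroups, with the gauge $f$ chosen to outrun any linear bound in $|t_n|_X$. The two constructions are closely related (your $H_n$ could be taken to be $\langle y_1,\ldots,y_n\rangle$), but yours has the pleasant feature that the metric axioms come for free from the subgroup/ultrametric structure, while the paper's version makes the role of the generating set more visible. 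Either way the punchline is the same: an infinitely generated $H$ carries a left-invariant metric growing faster than any orbit map of a finitely generated $G$ allows.
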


It is worth noting that the extension problem may not be solvable even for finite index subgroups of finitely generated groups, which are always undistorted.

\begin{ex} \label{vfree}
Let $$G=\langle a,b,t\mid t^2=1, \; t^{-1}at=b\rangle \cong F(a,b)\rtimes \mathbb Z_2$$ and $H=\langle a,b\rangle \cong F(a,b)$, where $F(a,b)$ is the free group with basis $\{ a, b\}$. Then the action of $H$ that factors through the translation action of $\langle a \rangle \cong \mathbb Z$ on $\mathbb R$ does not extend to an action of $G$. Indeed, if such an extension $G\curvearrowright S$ existed, the subgroup $\langle b\rangle $ (respectively, $\langle a\rangle $) would have bounded (respectively, unbounded) orbits in $S$. However this is impossible since $a$ and $b$ are conjugate in $G$.
\end{ex}

Proposition \ref{undist-intr} implies that solvability of the extension problem for \emph{all} subgroups of a given group is a rather rare phenomenon. For example, we prove the following in Section 4.1.

\begin{thm}\label{main1}
Let $G$ be a finitely generated elementary amenable group. Then the extension problem is solvable for all subgroups of $G$ if and only if there exists a finite index free abelian subgroup $A\le G$ and the action of $G$ on $A$ by conjugation factors through the action of $\mathbb Z/2\mathbb Z $ by inversion.
\end{thm}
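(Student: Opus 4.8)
\emph{Approach.} The statement is a biconditional: necessity is a rigidity result, sufficiency a construction.

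\emph{Necessity.} The key general observation is that the property ``the extension problem is solvable for every subgroup'' passes to subgroups: if $K\le G$ and $G\curvearrowright S$ extends $H\curvearrowright R$ for some $H\le K$, then the restricted action $K\curvearrowright S$ still extends $H\curvearrowright R$ via the same map $R\to S$. Combined with Proposition~\ref{undist-intr}, this forces every finitely generated subgroup of $G$ — and of every finitely generated subgroup of $G$ — to be undistorted, so in particular $G$ contains no integral Heisenberg group and no solvable Baumslag--Solitar group $BS(1,k)$ with $|k|\ge 2$. Feeding this into the structure theory of finitely generated elementary amenable groups (exhibiting, in every non-virtually-abelian such group, a subgroup for which the extension problem fails — distorted ones in the virtually polycyclic case, infinitely generated abelian or wreath-type ones otherwise) forces $G$ to be virtually $\mathbb Z^n$. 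Fix a finite-index normal free abelian $A\le G$ and let $\rho\colon G\to\mathrm{Aut}(A)$ be the conjugation homomorphism. If $\rho(G)\not\subseteq\{\pm\mathrm{id}\}$, choose $g\in G$ and $a\in A$ with $a':=gag^{-1}\notin\{a,a^{-1}\}$; since $\rho(g)$ preserves primitive vectors, $a$ and $a'$ are linearly independent in $A\otimes\mathbb Q$, so there is $\phi\colon A\to\mathbb Z$ with $\phi(a)=0\ne\phi(a')$. Let $A$ act on $\mathbb R$ by translation through $\phi$. Then $a$ has zero and $a'$ has positive stable translation length for this action, and a coarsely $A$-equivariant quasi-isometric embedding of $\mathbb R$ into any $A$-space distorts stable translation lengths only by a bounded multiplicative factor, so an extension $G\curvearrowright S$ would give $0=\ell_S(a)=\ell_S(a')>0$ (the first two terms being equal because $a$ and $a'=gag^{-1}$ act on $S$ as conjugate isometries) — a contradiction. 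Hence $\rho(G)\subseteq\{\pm\mathrm{id}\}$, which is the asserted condition (the degenerate case $n=0$ corresponding to $G$ finite). This is the mechanism already visible in Example~\ref{vfree}.

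\emph{Sufficiency.} Conversely, suppose $G$ has a finite-index normal subgroup $A\cong\mathbb Z^n$ whose conjugation action factors through $\e\colon G\to\mathbb Z/2\mathbb Z$ acting by inversion; let $H\le G$ and $H\curvearrowright R$, where we may assume the $H$-orbits are unbounded, since otherwise the trivial $G$-action on $R$ is already an extension. I would use two building blocks: transitivity of extensions, and the fact that the extension problem is solvable for any finite-index inclusion $B\le K$ of finitely generated abelian groups — the coinduced $K$-space $\mathrm{CoInd}_B^K R\cong R^{[K:B]}$ with the $\ell^1$ metric works, because commutativity of $K$ makes the tautological bi-Lipschitz map $R\to\mathrm{CoInd}_B^K R$ genuinely $B$-equivariant and makes $K$ act isometrically. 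Using these together with the decomposition of $A$ into $\e$-invariant direct summands — passing to the saturation of $H\cap A$ in $A$ and quotienting $G$ by a complementary summand, which is a normal subgroup of $G$ contained in $A$ and meeting the relevant subgroup trivially, and thus preserves the whole setup — one reduces to the case in which $H$ has finite index in $G$ and (after passing to a normal core) $A\le H$, with $A$ still normal in $G$ and carrying the $\pm 1$ conjugation action.

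\emph{Core construction.} In this remaining case, split on $\e$. If $\e$ is trivial, $A$ is central of finite index, so Schur's theorem gives that $[G,G]$ is finite; torsion-freeness of $A$ forces $A\cap[G,G]=1$, so $A$ embeds with finite index in the abelian group $G^{\mathrm{ab}}$, and one finishes by co-induction inside $G^{\mathrm{ab}}$ — with some bookkeeping to propagate the full $H$-action rather than only the $A$-action — and pulling the resulting action back along $G\twoheadrightarrow G^{\mathrm{ab}}$. If $\e$ is nontrivial, I would build the extension explicitly, generalizing the case of the infinite dihedral group $D_\infty=\mathbb Z\rtimes\mathbb Z/2$: take finitely many ``folded copies'' of $R$ indexed by $G/A$, let the copy over a coset $gA$ carry the $A$-action twisted by $\e(g)=\pm1$, let $G$ permute the copies through $G\curvearrowright G/A$ and act on them through the natural $A$-valued cocycle attached to a transversal, and glue the copies into a metric space by a rule of the form $\d\big((x,gA),(y,g'A)\big)=1+\inf_{a\in A}\big(\d_R(x,a p)+\d_R(y,a^{\pm1}p)\big)$ for a fixed basepoint $p\in R$, the exponent being dictated by $\e$. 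The triangle inequality then reduces to the symmetry of $\d_R$ and the fact that pairwise distances between points of the orbit $A\cdot p$ are invariant under inversion, $G$ acts isometrically because it preserves this orbit, and the inclusion of $R$ as the copy over the trivial coset is a coarsely $A$-equivariant bi-Lipschitz embedding.

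\emph{Main obstacle.} The delicate point is the explicit construction when $\e\ne 1$: making $G$ act by isometries on the union of folded copies while keeping the embedding of $R$ coarsely equivariant, the subtlety being that $R$ itself need not admit \emph{any} isometry conjugating the $A$-action to its inverse, so the inversion must be absorbed by folding onto an orbit rather than by a symmetry of $R$. On the necessity side, the one external input is the structure theory of finitely generated elementary amenable groups invoked to force virtual abelianness.
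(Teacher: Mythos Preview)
Your necessity argument is essentially the paper's, repackaged via stable translation length: both reduce to virtually abelian via elementary-amenable structure theory (all subgroups finitely generated forces virtually polycyclic, then all subgroups undistorted forces virtually abelian), and both rule out conjugation automorphisms other than $\pm\mathrm{id}$ by exhibiting conjugate cyclic subgroups with incompatible orbit growth in a suitable action.

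For sufficiency the paper takes a much shorter and quite different route. It uses Lemma~\ref{finind} to reduce to $H\le A$, then invokes the incompressibility criterion of Theorem~\ref{main3}: given $\d_H\in\mathcal M(H)$ and a $\d_{C,X}$-geodesic decomposition $g=f_1\cdots f_k$ of $g\in H$, the relation $ga=a^{\pm1}g$ lets one push all $H$-letters to one end, writing $g=fh$ with $f$ a word in $X$ landing in $H$ and $h$ a product of the $H$-letters; Lemma~\ref{vab} bounds $\d_H(1,f)$ via the $X$-length, and $\d_H(1,h)$ is bounded term by term. No extension space is ever built.

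Your explicit-construction route has a genuine gap in the $\e\ne1$ case: the folded-copies metric is $G$-invariant only when $[G:A]=2$, and your reduction does not guarantee this. Take $G=D_\infty=\langle a,t\mid tat^{-1}=a^{-1},\ t^2=1\rangle$ with the natural $A=\langle a\rangle$, and let $H=\langle a^2\rangle$. Your reduction leaves $G$ unchanged (saturation of $H$ in $A$ is $A$, no complement to kill), but to achieve ``$A\le H$'' you must shrink $A$ to $\langle a^2\rangle$, so now $[G:A]=4$. With transversal $\{1,a,t,at\}$ and $g=a$, the cocycle values at the two $\ker\e$-representatives are $\alpha_1=1$ (from $a\cdot 1=a\cdot 1$) and $\alpha_2=a^2$ (from $a\cdot a=1\cdot a^2$); whatever sign rule you pick for the exponent, $a$-invariance of the cross-copy distance between these two copies forces $\alpha_2=\alpha_1^{\pm1}$, which fails. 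The problem is structural: your gluing formula records only $\e$, whereas $G$-invariance needs the full $A$-valued cocycle. The natural two-step repair --- extend to $G_0=\ker\e$ via your central case, then across the index-$2$ inclusion $G_0\le G$ --- also stalls, since the second step would need the two-copy construction for $G_0$ in place of $A$, and there is no $\pm1$-structure on $G_0$ to drive the $\inf_a$ formula.
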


We then turn to the opposite side of the group theoretic universe and study Question \ref{EPintr} for groups with hyperbolic-like properties. Our main result in this direction is the following.

\begin{thm}[Cor. \ref{cor-he}]\label{main2}
Let $G$ be a group, $H$ a hyperbolically embedded subgroup of $G$. Then the extension problem for $H\le G$ is solvable.
\end{thm}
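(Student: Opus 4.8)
The plan is to deduce the solvability of the extension problem for $H\le G$ from the more precise assertion that, for a suitable \emph{induced action}, the space $\Ind_H^G(R)$ is itself an extension of $H\curvearrowright R$. Fix a relative generating set $X$ of $G$ with $H\h (G,X)$, so that $\G$ is hyperbolic and the relative metric $\widehat{\d}$ on $H$ is proper, and fix a basepoint $r_0\in R$. I would build $S=\Ind_H^G(R)$ as a graph of spaces: start from the Cayley graph $\Gamma(G,X)$ and, for each left coset $gH$, glue in a copy $R_{gH}$ of $R$ by identifying the vertex $gh$ of $\Gamma(G,X)$ with the point of $R_{gH}$ corresponding to $hr_0$ under the identification $R_{gH}\cong R$ determined by the representative $g$; put the metric of $R$ on each fiber $R_{gH}$ and let $\d_S$ be the resulting length metric. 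Since $X\cup H$ generates $G$ and each fiber is connected, $S$ is connected, and since left multiplication by $G$ permutes the cosets compatibly with the gluings, $G$ acts on $S$ by isometries.

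The candidate embedding is $\iota\colon R\to S$, $r\mapsto [e,r]$, the point of the fiber $R_H$ over the trivial coset. By construction $h\cdot\iota(r)=[h,r]=[e,hr]=\iota(hr)$, so $\iota$ is exactly (hence coarsely) $H$--equivariant, and since $R_H$ carries the metric of $R$ we get $\d_S(\iota(r),\iota(r'))\le \d_R(r,r')$, i.e. $\iota$ is $1$--Lipschitz. Thus the whole content of the theorem reduces to the reverse estimate: there is a constant $C$, independent of $r,r'$, with $\d_R(r,r')\le C\,\d_S(\iota(r),\iota(r'))+C$. Granting this, $G\curvearrowright S$ is an extension of $H\curvearrowright R$, and Question \ref{EPintr} has an affirmative answer for every action of $H$.

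For the reverse estimate I would take a path $\pi$ from $\iota(r)$ to $\iota(r')$ of length at most $\d_S(\iota(r),\iota(r'))+1$ and decompose it into maximal subpaths inside the fiber $R_H$ and maximal \emph{excursions} whose interiors avoid $R_H$. Since $\Gamma(G,X)$--edges meet $R_H$ only at the points $[h,r_0]$ with $h\in H$, each excursion runs from some $[h_i,r_0]$ to some $[h_{i+1},r_0]$, and its ``shadow'' in $\G$ is a path from $h_i$ to $h_{i+1}$ using no edge of the coset $H$, of $\dxh$--length bounded linearly in the length of the excursion (each $X$--edge contributes $1$, and each maximal stretch inside a fiber $R_{gH}$ with $gH\neq H$ projects to a single $\H$--edge). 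Hence $\widehat{\d}(h_i,h_{i+1})$ is at most a linear function of the excursion length, and by properness of $\widehat{\d}$ the element $h_i^{-1}h_{i+1}$ ranges over a finite set determined by that length. Combining the in-fiber subpaths (which already cost at least the corresponding $\d_R$--distances) with the triangle inequality in $R$ across the endpoints $h_ir_0$, $h_{i+1}r_0$ of the excursions, one aims to bound $\d_R(r,r')$ linearly in the length of $\pi$.

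The main obstacle is precisely this final step. Properness of $\widehat{\d}$ by itself only controls the endpoints of an excursion of length $\ell$ up to a finite set depending on $\ell$, and so only bounds $\d_R(h_ir_0,h_{i+1}r_0)$ by some function of $\ell$ that need not be linear --- which is not enough for a quasi-isometric embedding. Turning this into a genuine linear bound requires the finer geometry of hyperbolically embedded subgroups: the bounded-coset-penetration--type estimates on isolated components of quasigeodesic polygons in $\G$, together with the hyperbolicity of $\G$, should force an excursion to be no more efficient, up to multiplicative and additive error, than the corresponding in-fiber path at moving within $R_H$. Controlling this interaction between the relative geometry of $H$ in $G$ and the a priori arbitrary action $H\curvearrowright R$ is the heart of the argument; the construction of $S$ and the verification of its length-space and equivariance properties are then routine.
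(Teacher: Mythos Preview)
Your construction of $S$ and the reduction to the reverse inequality are correct and agree with the paper's induced-action space.  You are also right that the gap lies exactly where you place it: from an excursion of length $\ell_i$ you only get $\widehat{\d}(h_i,h_{i+1})\le\ell_i$, and properness of $\widehat{\d}$ then bounds $\d_R(h_ir_0,h_{i+1}r_0)$ by some function $f(\ell_i)$ that need not be linear.  The appeal to ``BCP-type estimates'' does not close this, because applied to an entire excursion those estimates still only give $\widehat{\d}\le C\ell_i$, which is no better than what you already have.

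The paper closes the gap by a mechanism you have not found.  Rather than bounding across whole excursions in $S$, it factors the reverse estimate through an auxiliary metric on $G$.  Set $\d_H(h_1,h_2)=\d_R(h_1r_0,h_2r_0)$ and let $\d_{C,X}$ be the induced (weighted word) metric on $G$ in which $x\in X$ has weight $1$ and $h\in H$ has weight $\d_H(1,h)$.  The key step (Proposition~\ref{propLR}) is that the nearest-point projection $\pi\colon(G,\d_{C,X})\to(H,\d_H)$ is Lipschitz, and this is proved \emph{one letter at a time}: for $a\in G$ and a single generator $t$, form the geodesic quadrilateral in $\G$ with vertices $\pi(a),a,at,\pi(at)$.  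Either the $H$-edge from $\pi(at)$ to $\pi(a)$ is connected to the edge labelled $t$ (forcing $t\in H$ and giving $\d_H(\pi(a),\pi(at))\le w_{C,X}(t)$ directly), or it is an isolated component of a geodesic $4$-gon, and Lemma~\ref{ngon} gives $\widehat{\d}(\pi(a),\pi(at))\le 4D$ --- a \emph{constant} independent of $t$ and of the action on $R$.  Properness of $\widehat{\d}$ then produces a single constant $K$ bounding $\d_H(\pi(a),\pi(at))$ in this case.  Summing over a geodesic decomposition yields a $K$-Lipschitz projection, hence $(H,\d_H)\hookrightarrow(G,\d_{C,X})$ is a quasi-isometric embedding (this is the paper's notion of \emph{incompressibility}, Corollary~\ref{he->au}).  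A path-decomposition in $S$ of exactly the kind you describe then shows $\d_{C,X}(h,h')\le\d_S(\iota(hr_0),\iota(h'r_0))$ for $h,h'\in H$ (Lemma~\ref{inc->ext}), and chaining the two inequalities gives the linear reverse bound.

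The decisive idea you were missing is that working one letter at a time turns the relevant polygon into a quadrilateral, so the isolated-component estimate yields a \emph{uniform constant} bound on $\widehat{\d}$ at each step rather than a bound growing with the length; that is precisely what upgrades ``finite by properness'' to ``linearly bounded''.
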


The notion of a hyperbolically embedded subgroup was introduced by Dahmani, Guirardel, and Osin in \cite{DGO}. For the definition we refer to the next section. Examples include peripheral subgroups of relatively hyperbolic groups and maximal virtually cyclic subgroups containing a pseudo-Anosov element (respectively, a fully irreducible automorphism) in mapping class groups of closed surfaces (respectively, $Out(F_n)$), etc. For details and more examples see \cite{DGO}.

We mention one application of Theorem \ref{main2} to hyperbolic groups. Recall that a subgroup $H$ of a group $G$ is \emph{almost malnormal} if $|H\cap g^{-1}Hg|<\infty $ for all $g\in G\setminus H$.

\begin{cor}[Cor. \ref{hyp}]\label{tfhyp}
Let $G$ be a hyperbolic group.
\begin{enumerate}
\item[(a)] Suppose that $H$ is quasiconvex in $G$ and either virtually cyclic or almost malnormal. Then the extension problem is solvable for $H\le G$.
\item[(b)] Conversely, if the extension problem is solvable for a subgroup $H\le G$, then $H$ is quasiconvex.
\end{enumerate}
\end{cor}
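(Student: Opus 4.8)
The plan is to derive the two assertions separately: part (b) will be immediate from Proposition~\ref{undist-intr}, while part (a) will be assembled from Theorem~\ref{main2} and Theorem~\ref{main1}, together with standard facts about hyperbolic groups.

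\emph{Part (b).} A hyperbolic group is finitely generated, so Proposition~\ref{undist-intr} applies and shows that $H$ is finitely generated and undistorted in $G$. By the well-known characterisation of quasiconvexity --- a finitely generated subgroup of a hyperbolic group is quasiconvex if and only if its inclusion is a quasi-isometric embedding --- the subgroup $H$ is quasiconvex.

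\emph{Part (a).} If $H$ is finite, the trivial action of $G$ works by Example~\ref{ex2}(a), so assume $H$ is infinite. If $H$ is quasiconvex and almost malnormal, then $H$ is hyperbolically embedded in $G$ --- this is the standard description of hyperbolically embedded subgroups of hyperbolic groups, see \cite{DGO} --- and Theorem~\ref{main2} finishes this case. Suppose now that $H$ is virtually cyclic and quasiconvex, and pick an infinite order element $h\in H$; then $\langle h\rangle$ has finite index in $H$, $h$ is loxodromic in $G$, and we let $E=E_G(h)$ be the stabiliser of the pair $\{h^{+\infty},h^{-\infty}\}\subseteq\partial G$, i.e. the unique maximal virtually cyclic subgroup of $G$ containing $h$. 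Two observations drive the argument. First, $H\le E$, since any virtually cyclic subgroup containing $h$ fixes the same pair of boundary points. Second, $E$ is quasi-isometrically embedded (hence quasiconvex) and almost malnormal in $G$ --- almost malnormality holds because an infinite-order element of $E\cap g^{-1}Eg$ forces $g$ to preserve $\{h^{+\infty},h^{-\infty}\}$, and an infinite subgroup of a hyperbolic group contains such an element --- so $E$ is hyperbolically embedded and, by Theorem~\ref{main2}, the extension problem is solvable for $E\le G$. It then remains to solve the extension problem for $H\le E$, and here I would invoke Theorem~\ref{main1}: $E$ is a finitely generated elementary amenable virtually cyclic group which is infinite, so the normal core of $\langle h\rangle$ in $E$ is a finite-index free abelian subgroup $A\cong\mathbb Z$, and the conjugation action of $E$ on $A$ automatically factors through $\operatorname{Aut}(\mathbb Z)=\mathbb Z/2\mathbb Z$ acting by inversion; hence Theorem~\ref{main1} guarantees that the extension problem is solvable for every subgroup of $E$, in particular for $H\le E$.

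Finally, I would combine the two solvability statements. Given an action $H\curvearrowright R$, first extend it to an action $E\curvearrowright R'$ together with a coarsely $H$-equivariant quasi-isometric embedding $R\to R'$, and then extend $E\curvearrowright R'$ to an action $G\curvearrowright S$ together with a coarsely $E$-equivariant (hence coarsely $H$-equivariant) quasi-isometric embedding $R'\to S$. The composite $R\to R'\to S$ is again a quasi-isometric embedding, and it is again coarsely $H$-equivariant because a quasi-isometric embedding is coarsely Lipschitz; thus $G\curvearrowright S$ extends $H\curvearrowright R$, completing part (a). The main obstacle, and the reason for this particular route, is that solvability of the extension problem is not inherited by finite-index subgroups in general (Example~\ref{vfree}); the virtually cyclic case therefore genuinely depends on identifying the correct hyperbolically embedded intermediate subgroup $E_G(h)$ and on exploiting the very restrictive structure of virtually cyclic overgroups through Theorem~\ref{main1}.
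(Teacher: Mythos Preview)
Your proof is correct and follows essentially the same route as the paper: part (b) via Proposition~\ref{undist-intr} and the undistorted-iff-quasiconvex characterisation, and part (a) by treating the almost malnormal case via hyperbolic embedding (the paper cites Bowditch's relative hyperbolicity criterion, you cite \cite{DGO}, which amounts to the same thing) and the virtually cyclic case by passing to the maximal virtually cyclic subgroup $E$, using that $E\hookrightarrow_h G$, and handling $H\le E$ via Theorem~\ref{main1}. The paper packages the last step as Lemma~\ref{vc} and cites \cite[Theorem~6.8]{DGO} directly for $E\hookrightarrow_h G$ rather than rederiving almost malnormality of $E$, but the substance is identical.
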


Notice that the extension problem for the pair  $F_2\le F_2\times \mathbb Z/2\mathbb Z$, where $F_2$ is the free group of rank $2$, is solvable (see Example \ref{ex2} (b)) while $F_2$ is neither virtually cyclic nor almost malnormal in $F_2\times \mathbb Z/2\mathbb Z$. Thus the sufficient condition from part (a) is not necessary for the extension problem to be solvable. On the other hand, Example \ref{vfree} shows that the necessary condition from part (b) is not sufficient.

\paragraph{1.2. Induced actions.}
Our proof of Theorem \ref{main2} and the ``if" part of Theorem \ref{main1} is based on the construction of an \emph{induced action}. We only present it for geodesic metric spaces in this paper, although a similar theory can be developed in general settings. Restricting to geodesic spaces makes our exposition less technical and still allows us to answer Question \ref{EPintr} in the full generality since any action of any group on a metric space extends to an action of the same group on a geodesic metric space (see Proposition \ref{geo}).

Induced actions can be defined in a functorial way if we consider group actions on metric spaces up to a natural equivalence relation. In order to avoid dealing with proper classes, we fix some cardinal number $c\ge \mathfrak{c}$, where $\mathfrak{c}$ is the cardinality of the continuum and, henceforth, we assume all metric spaces to have cardinality at most $c$. All results proved in this paper remain true for every such $c$. For most applications, it suffices to take $c=\mathfrak{c}$.

\begin{defn}\label{eqdef}
 We say that two actions $G\curvearrowright R$ and $G\curvearrowright S$ of a group $G$ on metric spaces $R$ and $S$ are \emph{equivalent} (and write $G\curvearrowright R \sim G\curvearrowright S$), if there exists a coarsely $G$--equivariant quasi-isometry $R\to S$.
\end{defn}

It is easy to see that $\sim $ is indeed an equivalence relation (see Proposition \ref{equiv}). The equivalence class of an action $G \curvearrowright R$ is denoted by $[G \curvearrowright R]$; we also denote by $\AG$ the set of all equivalence classes of actions of $G$ on geodesic metric spaces (of cardinality at most $c$).

Suppose that a group $G$ is generated by a subset $X$ relative to a subgroup $H$; that is, $G=\langle X\cup H\rangle $. Associated to such a triple $(G,H,X)$ is a natural map
$$
\Ind_X\colon \AH\to \AG,
$$
called the \emph{induced action}. Our construction is especially easy to describe in the particular case when $H$ is generated by a set $Y$ and $A\in \AH$ is the equivalence class of the natural action of $H$ on its Cayley graph $\Gamma (H,Y)$. Then
$$
\Ind_X(A) =[G \curvearrowright \Gamma (G, X\cup Y)].
$$
In the general case, the definition is a bit more technical: $\Ind _X ([H \curvearrowright R])$ is defined by mixing the left action of $G$ on the coset graph of $H$ (with respect to $X$) and the given action $H\curvearrowright R$ in a natural way. This involves several additional parameters, but the resulting action is independent of them up to the equivalence introduced above. If $G$ is finitely generated modulo $H$, $\Ind_X$ turns out to be independent of the choice of a finite relative generating set $X$ and is denoted simply by $\Ind $.

Given $H\le G$, we say that $B\in \AG$ is an \emph{extension} of $A\in \AH$ if some (equivalently, any) action $G\curvearrowright S\in B$ is an extension of some (equivalently, any) $H\curvearrowright R\in A$. Obviously $\Ind_X(A)$ cannot always be an extension of $A\in \AH$, as such an extension may not exist at all. However, our next theorem shows that the induced action is, in a certain sense, the best thing we can hope for. Here we state our result for relatively finitely generated groups and refer to Theorem \ref{EP<->inc} for the general case.

\begin{thm}\label{main3}
Let $G$ be group and let $H\le G$. Assume that $G$ is finitely generated relative to $H$. Then the following conditions are equivalent.
\begin{enumerate}
\item[(a)] The extension problem for the pair $H\le G$ is solvable.
\item[(b)] For every action $A\in \AH$, $\Ind(A)$ is an extension of $A$.
\item[(c)] The subgroup $H$ is incompressible in $G$.
\end{enumerate}
\end{thm}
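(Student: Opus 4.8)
The plan is to prove Theorem \ref{main3} by establishing the cycle of implications (b)$\Rightarrow$(a)$\Rightarrow$(c)$\Rightarrow$(b). The implication (b)$\Rightarrow$(a) is essentially a tautology: if $\Ind(A)$ is an extension of $A$ for \emph{every} $A\in\AH$, then in particular every action $H\curvearrowright R$ admits an extension (namely any representative of $\Ind([H\curvearrowright R])$), so the extension problem is solvable by definition. The only subtlety is to make sure the quantifiers in Definition \ref{eqdef} and the notion of extension for equivalence classes are compatible, but this is exactly the ``some (equivalently, any)'' remark already granted in the excerpt.

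The implication (a)$\Rightarrow$(c) should follow by unpacking what incompressibility means. Here I would expect ``$H$ is incompressible in $G$'' to say (roughly) that no action of $H$ gets ``crushed'' when compared against induced data — concretely, that for every action $H\curvearrowright R$ the natural orbit map from $R$ into (a representative of) $\Ind([H\curvearrowright R])$ is a quasi-isometric embedding, or equivalently that $H$-metrics do not become distorted in the induced space. If the extension problem is solvable, pick an extension $G\curvearrowright S$ of a given $H\curvearrowright R$, with coarsely $H$-equivariant quasi-isometric embedding $R\to S$; then I would use the universal/functorial property of $\Ind_X$ — that $\Ind_X([H\curvearrowright R])$ maps coarsely $G$-equivariantly into any $G$-action that extends $H\curvearrowright R$ — to factor this embedding through $\Ind(A)$ and conclude the relevant non-distortion, hence incompressibility. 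This is where I expect to lean hardest on results proven earlier about $\Ind_X$ (its independence of parameters and of the finite relative generating set, and its behavior with respect to extensions), and it is likely the main obstacle: one must show not merely that \emph{an} extension exists but that the \emph{canonical} one, $\Ind(A)$, already witnesses the non-distortion, which requires the comparison map $\Ind(A)\to S$ to be controlled in both directions on the image of $R$.

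Finally, (c)$\Rightarrow$(b): assuming $H$ is incompressible, I want to show $\Ind(A)$ is an extension of $A$ for every $A\in\AH$. By the definition of extension for equivalence classes it suffices to exhibit, for one representative $H\curvearrowright R\in A$ and one representative $G\curvearrowright S\in\Ind(A)$, a coarsely $H$-equivariant quasi-isometric embedding $R\to S$. The natural candidate is the ``inclusion'' of $R$ into the mixed space defining $\Ind_X([H\curvearrowright R])$ (in the Cayley-graph case, the inclusion $\Gamma(H,Y)\hookrightarrow\Gamma(G,X\cup Y)$). That this map is coarsely $H$-equivariant should be built into the construction; that it is a quasi-isometric embedding is precisely the content of incompressibility — indeed I anticipate that (c) is \emph{defined} so as to make this step immediate, so the real work has already been front-loaded into the construction of $\Ind_X$ and into verifying (a)$\Rightarrow$(c). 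Throughout, finite relative generation of $G$ over $H$ is used exactly to guarantee that $\Ind$ is well defined independently of the choice of finite relative generating set $X$, so that statements (b) and (c) do not secretly depend on auxiliary data.
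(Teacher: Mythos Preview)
Your cycle (b)$\Rightarrow$(a)$\Rightarrow$(c)$\Rightarrow$(b) matches the paper's, and your treatment of (b)$\Rightarrow$(a) is fine (modulo the need for Proposition~\ref{geo} to pass from an arbitrary action to a geodesic one so that it lies in $\AH$). The problems are in the other two implications, and they both stem from a wrong guess about what incompressibility means.

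Incompressibility is \emph{not} defined in terms of $\Ind$ or actions at all: it is the purely metric condition that for every left-invariant integer-valued metric $\d_H\in\mathcal M(H)$, the inclusion $(H,\d_H)\hookrightarrow (G,\d_{C,X})$ is a quasi-isometric embedding, where $\d_{C,X}$ is the induced metric on $G$ built from $\d_H$ and a finite relative generating set $X$ (Definition~\ref{IMG}). Consequently your step (c)$\Rightarrow$(b) is not ``immediate by definition''; it is the content of Lemma~\ref{inc->ext}, which takes real work: one defines $\d_H$ via the orbit map $h\mapsto hb$ in $R$, invokes incompressibility to get $\d_H\le D\,\d_{C,X}$ on $H$, proves separately that $\d_{C,X}(g_1,g_2)\le\d_S(g_1b,g_2b)$ by analyzing a geodesic in the induced space $S$, and then combines these to bound $\d_R$ below by $\d_S$.

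Your step (a)$\Rightarrow$(c) also does not go as you sketch. The paper (Lemma~\ref{EP->inc}) never factors through $\Ind(A)$ and never uses a universal property of the induced action; no such comparison map $\Ind(A)\to S$ is constructed. Instead, given $\d_H$, one applies solvability of the extension problem to the \emph{left-regular} action $H\curvearrowright(H,\d_H)$, obtaining an extension $G\curvearrowright S$ with qi embedding $\beta\colon(H,\d_H)\to S$; one then shows directly that the orbit map $(G,\d_{C,X})\to S$ is Lipschitz (this is where finiteness of $X$ is used, in the spirit of Lemma~\ref{OML}), and since the composition $(H,\d_H)\to(G,\d_{C,X})\to S$ is close to $\beta$ and hence a qi embedding, the first map must be as well. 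Your proposed ``universal property'' of $\Ind_X$ may well be provable along similar lines, but it is not in the paper, and in any case it would deliver (b) rather than the metric statement (c).
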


The notion of an incompressible subgroup is introduced in Section 3.4. One can think of it as a stronger version of the notion of an undistorted subgroup of a finitely generated group. Unlike solvability of the extension problem, the property of being incompressible can be defined for a subgroup $H$ of a group $G$ in intrinsic terms, without mentioning any actions at all. Incompressibility and a generalization of Theorem \ref{main3} are instrumental in all proofs of extendability results in our paper.

Since hyperbolicity of a geodesic space is a quasi-isometry invariant, we can define \emph{hyperbolic elements} of $\AG$ to be equivalence classes of $G$-actions on hyperbolic spaces.  The following theorem shows that the construction of the induced action behaves well for hyperbolically embedded subgroups and, moreover, it can be used to characterize hyperbolic embeddings.

\begin{thm}\label{main4}
Assume that a group $G$ is generated by a set $X$ relative to a subgroup $H$.
\begin{enumerate}
\item[(a)] If $H$ is hyperbolically embedded in $G$ with respect to $X$, then for every $A\in \AH$, $\Ind_X(A)$ is an extension of $A$; if, in addition, $A$ is hyperbolic then so is $Ind_X(A)$.
\item[(b)] Suppose that $H$ is countable and for every hyperbolic $A\in \AH$, $\Ind_X(A)$ is a hyperbolic extension of $A$. Then $H$ is hyperbolically embedded in $G$ with respect to $X$.
\end{enumerate}
\end{thm}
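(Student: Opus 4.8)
The two implications need rather different arguments, so I would handle them separately.

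\emph{Part (a).} The first thing is to recall the space $S$ representing $\Ind_X(A)$: it is built from the Cayley graph $\Gamma(G,X)$ by attaching, over each left coset $gH$, a copy $R_{gH}$ of a fixed geodesic representative of $R$, gluing the vertex $gh$ to the point $ho\in R_{gH}$ (for a fixed basepoint $o\in R$); $G$ acts on $S$, and the inclusion $\iota$ of the fiber $R_H$ is an $H$--equivariant, coarsely Lipschitz map $R\to S$. What has to be proved is that $\iota$ is a quasi-isometric embedding, i.e. $\d_R(p,q)\le C\,\d_S(p,q)+C$ for $p,q\in R_H$, the opposite inequality being clear. The plan is to take a geodesic $\gamma$ of $S$ with endpoints in $R_H$, decompose it into maximal subarcs lying in $R_H$ and \emph{excursions} through $\Gamma(G,X)\cup\bigcup_{gH\ne H}R_{gH}$, and estimate the pieces: a subarc inside $R_H$ contributes exactly the $\d_R$--distance of its endpoints, while an excursion from $ho$ to $h'o$, after replacing each maximal subarc inside a fiber $R_{gH}$ ($gH\ne H$) by the corresponding $H$--edge, projects to a path of $\Gamma(G,X\sqcup H)$ from $h$ to $h'$ which avoids the subgraph spanned by the coset $H$ -- so that its length is at least the relative distance $\hat d(h,h')$. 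One then has to convert this into a bound of the form $(\text{length})\ge\frac1C\d_R(ho,h'o)-C$; this is where the two defining properties of $H\h(G,X)$ -- hyperbolicity of $\Gamma(G,X\sqcup H)$ and local finiteness of $\hat d$ -- enter, controlling how the length an excursion spends on $X$--edges compares with the length it spends traversing fibers. Since each excursion has length at least $1$, summing over the decomposition and using the triangle inequality in $R$ would give the lower bound. For the last assertion -- that $\Ind_X(A)$ is hyperbolic whenever $A$ is -- I would prove a combination statement: $S$ is got from the hyperbolic space $\Gamma(G,X\sqcup H)$ by replacing each bounded coset-blob with a uniformly hyperbolic space glued along a uniformly quasiconvex subset, the cosets being geometrically separated because $H\h(G,X)$; hyperbolicity of $S$ then follows either by a ``guessing geodesics'' argument -- routing a geodesic triangle of $S$ as a concatenation of fiber-geodesics and excursions shadowing a geodesic triangle of $\Gamma(G,X\sqcup H)$ -- or by invoking a combination theorem for trees/complexes of hyperbolic spaces, in analogy with the fact that a group hyperbolic relative to hyperbolic subgroups is hyperbolic.

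\emph{Part (b).} Here $G=\langle X\cup H\rangle$ is given, and I must check the remaining two defining properties of $H\h(G,X)$. Hyperbolicity of $\Gamma(G,X\sqcup H)$ is immediate from the hypothesis: apply it to the bounded, hence hyperbolic, action $A_0=[H\curvearrowright\Gamma(H,H)]$; by the displayed formula for induced actions on Cayley graphs $\Ind_X(A_0)=[G\curvearrowright\Gamma(G,X\cup H)]$, which is quasi-isometric to $\Gamma(G,X\sqcup H)$, and the hypothesis says this is hyperbolic. For local finiteness of $\hat d$ I would argue by contradiction: suppose $F:=\{h\in H:\hat d(1,h)\le n\}$ is infinite for some $n$. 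Using countability of $H$ I would first extract, by an application of the infinite Ramsey theorem (the relation ``$h$ labels an edge of a fixed $\hat d$--geodesic from $1$ to $h'$'' has bounded out-degree $n$, so has no infinite clique), an infinite subset $F_0\subseteq F$ with the property that no element of $F_0$ occurs as the label of an edge in one of those $\hat d$--geodesics joining $1$ to an element of $F_0$; then I would build a hyperbolic action $A=[H\curvearrowright R]$ -- a combinatorial horoball over a suitable connected graph on $H$ is the natural candidate, horoballs being hyperbolic -- for which the orbit of the basepoint is unbounded along $F_0$ while the basepoint is moved only a bounded amount by all the edge-labels occurring in those $\hat d$--geodesics. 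On one hand the extension hypothesis gives a coarsely $H$--equivariant quasi-isometric embedding $f\colon R\to\Ind_X(A)$, so $\{f(ho):h\in F_0\}$ is unbounded in $\Ind_X(A)$; on the other hand, realizing those length-$\le n$ $\hat d$--geodesics inside $\Ind_X(A)$ -- their $X$--edges contribute a bounded amount and their remaining edges run through fibers over cosets $\ne H$, which by the choice of $A$ are traversed a bounded amount -- places the vertex $h$ in a bounded neighborhood of the vertex $1$ of $\Ind_X(A)$ for every $h\in F_0$; once $f$ is identified, up to bounded error, with the fiber inclusion $\iota$, this gives the desired contradiction.

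I expect the genuine difficulties to be the following. In part (a): proving the excursion estimate -- actually carrying out the comparison between the $X$--edge length and the fiber-traversal length of an excursion, which is exactly where hyperbolic embeddedness is used -- together with the combination theorem, which is a constant-sensitive argument. In part (b): the final step, identifying the abstract extension map $f$ with the fiber inclusion (using hyperbolicity of $\Ind_X(A)$ and the structure of the $H$--action on it), together with the bookkeeping needed to choose $A$ so that the selected $\hat d$--geodesics realize to bounded length in $\Ind_X(A)$ -- if $H$ were finitely generated one could take $A$ to be a proper action on a combinatorial horoball over a locally finite Cayley graph and the argument would be routine, whereas for a general countable $H$ no proper action on a hyperbolic space need exist, which is precisely what forces these estimates to be done by hand.
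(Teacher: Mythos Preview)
Your outline diverges from the paper's proof in both parts, and in each case there is a key device you are missing.

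\textbf{Part (a).} The paper does not estimate excursions in $S$ directly. It first proves an intermediate statement that does all the work: $H$ is \emph{incompressible} in $(G,X)$, i.e.\ for every left-invariant integer metric $\d_H$ on $H$ the inclusion $(H,\d_H)\hookrightarrow(G,\d_{C,X})$ is a quasi-isometric embedding (where $\d_{C,X}$ is the weighted word metric with weight $1$ on $X$ and weight $\d_H(1,h)$ on $h\in H$). This is obtained by showing that any nearest-point projection $\pi\colon G\to H$ is Lipschitz from $\d_{C,X}$ to $\d_H$; the crucial estimate is the bound on isolated $H$-components in geodesic $n$-gons of $\Gamma(G,X\sqcup H)$. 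Once one has incompressibility, the fiber inclusion is a quasi-isometric embedding by the short argument you sketch (first and last exit points on an $S$-geodesic, triangle inequality). Your excursion estimate, by contrast, produces only the inequality $\widehat d(h,h')\le(\text{projected length})\le(\text{excursion length})$; to finish you would need $\widehat d(h,h')\ge(1/C)\,\d_R(ho,h'o)-C$, which is simply false---$\widehat d$ is a fixed proper metric on $H$, while the orbit metric of $R$ can be chosen arbitrarily large. The quantity that has to be bounded below is $\d_{C,X}(h,h')$, not $\widehat d(h,h')$, and the Lipschitz retraction is exactly what accomplishes this. For hyperbolicity, the paper again takes a different route: it uses the isoperimetric characterisation of hyperbolic embeddings (strongly bounded relative presentation with linear relative Dehn function) together with Bowditch's homological isoperimetric criterion, filling loops of $S$ via van Kampen diagrams. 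A combination theorem would also work, but you would still need quantitative separation of the fibers, which comes back to the same isolated-components lemma.

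\textbf{Part (b).} Your derivation of hyperbolicity of $\Gamma(G,X\sqcup H)$ matches the paper's. For local finiteness of $\widehat d$, however, the paper bypasses your Ramsey/bookkeeping scheme by a trick you explicitly flag as unavailable: it embeds the countable group $H$ in a \emph{finitely generated} group $K$ (Higman--Neumann--Neumann) and takes $R$ to be the combinatorial horoball over $\Gamma(K,Z)$ for a finite generating set $Z$. The contradiction is then obtained not by bounding $\d_S(1,h)$ from above, but geometrically: the short $\widehat d$-path together with the $H$-edge labelled $h^{-1}$ is a loop in $\Gamma(G,X\sqcup H)$ with a single isolated $H$-component; lifting to $S$ gives a polygon with one long quasi-geodesic side inside the fiber $\{H\}\times R$, and the Ol'shanskii $n$-gon lemma combined with quasi-geodesic stability forces a long subsegment of that side to fellow-travel some other fiber, impossible because horoball geodesics contain long vertical runs that escape any bounded neighbourhood of $\Gamma(G,X)$. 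Your plan has a real gap: the Ramsey step only guarantees that elements of $F_0$ do not occur as edge-labels of the chosen $\widehat d$-geodesics, but to build an $H$-action in which those labels displace $o$ boundedly while $F_0$ displaces $o$ unboundedly you need much more---if, say, the label set generates $H$, no left-invariant metric can have this property.
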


\begin{rem}
The countability assumption in part (b) cannot be dropped, see Example \ref{ex-c}. Also the condition that $\Ind _X(A)$ is a hyperbolic extension of $A$ for every hyperbolic $A\in \AH$ cannot be replaced with the assumption that every action of $H$ on a hyperbolic space extends to an action of $G$ on a hyperbolic space, see Example \ref{ex-d}.
\end{rem}

Theorem \ref{main4} can be applied to construct interesting actions of
acylindrically hyperbolic groups on hyperbolic spaces.  Recall that
the class of acylindrically hyperbolic groups includes mapping class
groups of closed hyperbolic surfaces, $\operatorname{Out}(F_n)$ for $n\ge 2$, groups
of deficiency at least $2$, most $3$-manifold groups, and many other
examples. It is proved in \cite{DGO} that every acylindrically hyperbolic
group $G$ contains a hyperbolically embedded subgroup isomorphic to
$F_2\times K$, where $K$ is finite and $F_2$ is free of rank $2$. Thus
one can get interesting actions of $G$ on hyperbolic spaces starting
from actions of $F_2$ and applying Theorem \ref{main4} (and Example \ref{ex2} (b)).
This idea is used in \cite{ABO} to obtain several results about hyperbolic
structures on groups.

\paragraph{Acknowledgements.} The first author was supported by NSF RTG award DMS-1502553. The second author was supported by the grant ANR-14-CE25-0004 ``GAMME'', and the NSF grant DMS-1440140 while the author was in residence at the Mathematical Sciences Research Institute in Berkeley, California, during the Fall 2016 semester. The third author was supported by the NSF grant DMS-1308961.

%%%%%%%%%%%%%%%%%%%%%%%%%%%%%%%%%%%%%%%%%%%%%%%%%%%%%%%%%%%%%%%%%%

\section{Preliminaries}

%%%%%%%%%%%%%%%%%%%%%%%%%%%%%%%%%%%%%%%%%%%%%%%%%%%%%%%%%%%%%%%%%%

\subsection{Metric spaces and group actions: notation and terminology}
In this paper we allow distance functions on metric spaces to take infinite values. More precisely, we extend addition and ordering from $[0, \infty )$ to $[0,\infty]$ in the natural way: $c+\infty =\infty +c =\infty $ and $c\le \infty $ for all $c\in [0, +\infty]$. Following the standard terminology, by an \emph{extended metric space} we mean a set $S$ endowed with a function $\d_S\colon S\times S\to [0, +\infty]$ that satisfies the standard axioms of a metric, where addition and ordering are extended as described above. The function $\d_S$ is called an \emph{extended metric}.

Given an (extended) metric space $S$, we always denote the (extended) metric on $S$ by $\d_S$ unless another notation is introduced explicitly. All graphs (not necessarily connected) are considered as extended metric spaces with respect to the standard combinatorial metric.

Given a path $p$ in a metric space in a metric space $S$, we denote by $p_-$ and $p_+$ the origin and the terminus of $p$, respectively. The length of $p$ is denoted by $\ell (p)$. A path $p$ in a metric space $S$ is called {\it $(\lambda , c)$--quasi--geodesic} for some $\lambda \ge 1$,
$c\ge 0$ if $$\ell (q)\le  \lambda d_S(q_-, q_+)+c$$ for any subpath $q$ of $p$.

For a subset $X$ of a group $G$, we denote by $\Gamma (G,X)$ the Cayley graph of $G$ with respect to $X$. We do not assume that $X$ generates $G$ here and therefore Cayley graphs are not necessarily connected. However we always assume that all subsets $X\subseteq G$ used to form Cayley graphs, as well as all generating sets of $G$, are symmetric. That is, given $x\in X$, we always assume that $x^{-1}\in X$. By $\d_X$ (respectively, $|\cdot |_X$) we denote the extended word metric (respectively length) on $G$ associated to a subset $X\subseteq G$. That is, $|g|_X$ is the usual word length if $g\in \langle X\rangle $ and $\infty$ otherwise; the metric is defined by $\d_X(a,b)=|a^{-1}b|_X$.

In this paper we understand properness of actions in the metric sense. That is, an action of a group $G$ on a metric space $S$ is called

\begin{enumerate}
\item[--] \emph{proper} if for every bounded subset $B\subseteq S$ the set $\{ g\in G\mid gB\cap B\ne \emptyset\}$ is finite;
\item[--] \emph{cobounded} if there exists a bounded subset $B\subseteq S$  such that $S=\bigcup_{g\in G} gB$;
\item[--] \emph{geometric} if it is proper and cobounded.
\end{enumerate}

We will need the following well-known fact about actions of finitely generated groups. We provide the proof for convenience of the reader.

\begin{lem}\label{OML}
Let $G$ be a group generated by a finite set $X$. For every action of $G$ on a metric space $S$ and every $s\in S$, there exists a constant $M$ such that, for all $g\in G$, we have
\begin{equation}\label{MS}
\d_S (s, gs) \le M|g|_X.
\end{equation}
\end{lem}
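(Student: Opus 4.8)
The plan is to reduce the general bound to the single-generator case and then use the triangle inequality. First I would let $M_0 = \max_{x \in X} \d_S(s, xs)$; this maximum is finite because $X$ is finite. The idea is that applying a single generator moves $s$ a bounded distance, and applying a word of length $n$ moves it at most $n$ times that bound.

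Concretely, fix $g \in G$ and write $g = x_1 x_2 \cdots x_n$ as a geodesic word in the generators, so $n = |g|_X$. Set $g_0 = 1$ and $g_i = x_1 \cdots x_i$ for $1 \le i \le n$, so that $g_n = g$. By the triangle inequality,
\[
\d_S(s, gs) \le \sum_{i=1}^{n} \d_S(g_{i-1} s, g_i s).
\]
Since the action is by isometries and $g_i = g_{i-1} x_i$, each term satisfies $\d_S(g_{i-1}s, g_i s) = \d_S(g_{i-1} s, g_{i-1}(x_i s)) = \d_S(s, x_i s) \le M_0$. Summing over $i$ gives $\d_S(s, gs) \le n M_0 = M_0 |g|_X$, so the claim holds with $M = M_0$.

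There is essentially no obstacle here: the only subtlety is ensuring $M_0$ is finite, which is exactly where finiteness of $X$ is used, and one should note that for $g \notin \langle X \rangle$ the inequality is vacuous since $|g|_X = \infty$ (though in fact $G = \langle X \rangle$ here so this case does not arise). If one wishes to avoid fixing a geodesic representative, the estimate $\d_S(s, gs) \le M_0 |g|_X$ can equivalently be proved by induction on $|g|_X$, using $|gx|_X \le |g|_X + 1$ and the isometry property at each step. Either way, the argument is a routine application of the triangle inequality together with $G$-invariance of $\d_S$.
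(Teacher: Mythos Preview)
Your proof is correct and follows essentially the same approach as the paper: define $M=\max_{x\in X}\d_S(s,xs)$, write $g$ as a geodesic word $x_1\cdots x_n$, and telescope using the triangle inequality together with the fact that the action is isometric. The paper's argument is identical in substance.
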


\begin{proof}
Let $$M=\max_{x\in X} \d_S (s, xs).$$ Suppose that an element $g\in G$ decomposes as $g=x_1x_2\ldots x_n$ for some $x_1, x_2, \ldots, x_n\in X$ and $n=|g|_X$. Then we have
\begin{equation}\label{qi1}
\begin{array}{rcl}
\d_S(s, gs)& \le &  \d_S (s, x_1s) + \d_S (x_1s, x_1x_2s) +\cdots + \d_S(x_1\cdots x_{n-1}s , x_1\cdots x_ns) \\&&\\&& \le \d_S (s, x_1s) + \d_S (s, x_2s) +\cdots + \d_S(s , x_ns)\le  Mn=M|g|_X.
\end{array}
\end{equation}
\end{proof}

\subsection{Hyperbolic spaces}

In this paper, we say that a geodesic metric space $S$ is \emph{$\delta $-hyperbolic} for some $\delta \ge 0$ if for every geodesic triangle $\Delta $ in $S$, every side of $\Delta $ belongs to the union of the $\delta$-neighborhood of the other two sides.

We will use two properties of hyperbolic spaces.  The first one is well-known and can be found, for example, in \cite[Theorem 1.7, p.401]{BH}.

\begin{lem}\label{qg}
For any $\delta \ge 0$, $\lambda \ge 1$, $c\ge 0$, there exists a
constant $\kappa =\kappa (\delta , \lambda , c)\ge 0$ such that
every two $(\lambda , c)$--quasi--geodesics in a $\delta $-hyperbolic space with the same endpoints belong to the closed $\kappa $-neighborhoods of each other.
\end{lem}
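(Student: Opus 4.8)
The plan is to follow the standard route to the stability property of quasi-geodesics in hyperbolic spaces (the Morse lemma). Write $N_r(T)$ for the closed $r$-neighborhood of a subset $T$ of $S$. First note that no ``taming'' step is needed here: applying the defining inequality of a $(\lambda,c)$-quasi-geodesic $p$ to the subpath $q=p$ gives $\ell(p)\le\lambda\,\d_S(p_-,p_+)+c<\infty$, so $p$ is rectifiable and may be taken parametrized by arc length. Second, any two $(\lambda,c)$-quasi-geodesics $p,p'$ with the same endpoints can be compared through the geodesic $\gamma=[p_-,p_+]$, so it suffices to produce a constant $D=D(\delta,\lambda,c)$ such that every $(\lambda,c)$-quasi-geodesic $p$ and the geodesic $[p_-,p_+]$ lie in the $D$-neighborhoods of one another; then $\kappa=2D$ works, since $\mathrm{im}\,p\subseteq N_D(\gamma)\subseteq N_{2D}(\mathrm{im}\,p')$ and symmetrically.

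The workhorse is the elementary fact that a geodesic fellow-travels \emph{any} rectifiable path joining its endpoints: if $q$ is rectifiable with $\ell(q)\ge 1$ and $\gamma_q$ is a geodesic with the same endpoints, then every point of $\gamma_q$ lies within $\delta\lceil\log_2\ell(q)\rceil+1$ of $\mathrm{im}\,q$; this is proved by bisecting $q$ by arc length, using $\delta$-slimness of the resulting triangle to transfer the chosen point of $\gamma_q$ onto a geodesic joining the midpoint to one endpoint, and iterating $\lceil\log_2\ell(q)\rceil$ times until the sub-arcs have length $\le 1$. Using this I would establish the \emph{forward inclusion} $\gamma\subseteq N_{D_0}(\mathrm{im}\,p)$ as follows. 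Let $D_0=\max_{z\in\gamma}\d_S(z,\mathrm{im}\,p)$, attained at some $z_0\in\gamma$ by compactness of $\gamma$ (the case $D_0<1$ being trivial). Choose $y_1,y_2\in\gamma$ on opposite sides of $z_0$ at distance $\min(2D_0,\d_S(z_0,p_\mp))$ from $z_0$, together with points $p(s_1),p(s_2)$ of $\mathrm{im}\,p$ with $\d_S(y_i,p(s_i))\le D_0$ (such points exist since each $y_i$ is either $p_\mp$ or an interior point of $\gamma$), and let $q$ run along $[y_1,p(s_1)]$, then along the subpath of $p$ between $p(s_1)$ and $p(s_2)$, then along $[p(s_2),y_2]$. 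The quasi-geodesic inequality together with $\d_S(p(s_1),p(s_2))\le 6D_0$ gives $\ell(q)\le(2+6\lambda)D_0+c$, while every point of $\mathrm{im}\,q$ is at distance $\ge D_0$ from $z_0$ (the geodesic end-pieces because $\d_S(z_0,y_i)\ge 2D_0$ or $y_i=p_\mp$, the middle piece because it lies in $\mathrm{im}\,p$). Applying the workhorse to $q$ and the subsegment of $\gamma$ bounded by $y_1,y_2$, which contains $z_0$, yields $D_0\le\delta\log_2\bigl((2+6\lambda)D_0+c\bigr)+\delta+1$, and this bounds $D_0$ by a constant $D_0(\delta,\lambda,c)$ since the left side grows linearly and the right side logarithmically in $D_0$.

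For the \emph{reverse inclusion} $\mathrm{im}\,p\subseteq N_{D_1}(\gamma)$, pick $p(t_0)$ on $\mathrm{im}\,p$ farthest from $\gamma$, put $R=\d_S(p(t_0),\gamma)$ (attained, the domain being compact), and assume $R\ge 2\delta$. In the geodesic triangle on $p_-,p_+,p(t_0)$ let $u$ be the point of the side $[p_-,p(t_0)]$ with $\d_S(u,p(t_0))=R-2\delta$. By $\delta$-slimness $u$ is $\delta$-close to $\gamma$ or to the side $[p(t_0),p_+]$; the first alternative would force $\d_S(p(t_0),\gamma)\le R-\delta$, contradicting the choice of $p(t_0)$, so $\d_S(u,v)\le\delta$ for some $v$ on $[p(t_0),p_+]$. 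Applying the forward inclusion to the two subpaths of $p$ on either side of $p(t_0)$, we get $\d_S(u,p(s_1))\le D_0$ with $s_1$ on the $p_-$-side of $t_0$ and $\d_S(v,p(s_2))\le D_0$ with $s_2$ on the $p_+$-side. Then $\d_S(p(s_1),p(s_2))\le 2D_0+\delta$, so by the quasi-geodesic inequality the subpath of $p$ from $p(s_1)$ to $p(s_2)$ — which passes through $p(t_0)$ — has length $\le\lambda(2D_0+\delta)+c$, whence $\d_S(p(t_0),p(s_1))\le\lambda(2D_0+\delta)+c$ and therefore $R=\d_S(p(t_0),u)+2\delta\le\d_S(p(t_0),p(s_1))+D_0+2\delta\le\lambda(2D_0+\delta)+c+D_0+2\delta=:D_1$.

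Setting $D=D_0+D_1$, so that $\kappa=2D$, finishes the proof. I expect the reverse inclusion to be the delicate part: it is the one place where $\delta$-slimness of the triangle on $\{p_-,p_+,p(t_0)\}$ has to be combined with the quasi-geodesic inequality \emph{and} with the already-proven forward inclusion in exactly the right order, whereas the path-fellow-traveling lemma and the forward inclusion amount to triangle-inequality bookkeeping plus the logarithm-versus-linear comparison. (Should one prefer to cite rather than reprove, this is precisely \cite[Theorem 1.7, p.401]{BH}.)
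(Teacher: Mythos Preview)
Your argument is the standard Bridson--Haefliger proof and is correct; the paper itself does not prove this lemma at all but simply cites \cite[Theorem 1.7, p.~401]{BH}, exactly as you note in your final parenthetical. So there is nothing to compare: you have reproduced the cited proof rather than diverged from anything the authors wrote.
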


The next lemma is a combination of a simplified version of Lemma 10 from \cite{Ols92} and the fact that in a $\delta$-hyperbolic space every side of a geodesic  quadrilateral belongs to the closed $2\delta$-neighborhood of the other $3$ sides.

\begin{lem}\label{Ols}
Let $S$ be a subset of the set of sides of a geodesic $n$--gon $\mathcal P=p_1p_2\ldots p_n$ in a $\delta $--hyperbolic space. Assume that the total lengths of all sides from $S$ is at least $10^3 cn$ for some $c\ge 30\delta $. Then there exist two distinct sides $p_i$, $p_j$, and subsegments $u$, $v$ of $p_i$ and $p_j$, respectively, such that $p_i\in S$, $\min \{ \ell(u), \ell(v)\} > c$, and $u$, $v$ belong to $15\delta $-neighborhoods of each other.
\end{lem}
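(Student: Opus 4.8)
The plan is to deduce this statement directly from the cited Lemma 10 of \cite{Ols92} together with the elementary quadrilateral fact. First I would recall the precise form of Olshanskii's Lemma 10: for a geodesic $n$-gon in a $\delta$-hyperbolic space, if one designates a subset $S$ of sides whose total length is sufficiently large compared to $cn$, then there exist two distinct designated sides (or, in the general formulation, one designated side $p_i$ and another side $p_j$) together with long subsegments $u\subseteq p_i$, $v\subseteq p_j$ of length greater than $c$ that fellow-travel within a bounded neighborhood of each other, where the fellow-traveling constant is a fixed multiple of $\delta$. The main point is to reconcile the hypotheses and the constants: our statement uses the explicit threshold $10^3 cn$ with $c\ge 30\delta$ and the neighborhood constant $15\delta$, so I would need to check that Olshanskii's bounds, possibly stated with different numerical constants, can be rescaled to these values. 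Since the constant $30$ and $10^3$ are generous, this should be a matter of bookkeeping: if Olshanskii's version requires $c \ge c_0\delta$ and a total length $\ge c_1 cn$ with fellow-traveling constant $c_2\delta$, one absorbs everything by enlarging $c$ and verifying $15\delta \ge c_2\delta$ after using the quadrilateral estimate (which costs $2\delta$ per reduction step, cf. the ``side of a quadrilateral lies in the $2\delta$-neighborhood of the other three'' fact).

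The role of the quadrilateral fact is to handle the reduction from an $n$-gon to the configuration of two sides: in Olshanskii's argument one repeatedly replaces a subpath of the polygon by a geodesic with the same endpoints, and each such replacement stays within the $2\delta$-neighborhood because every side of a geodesic quadrilateral lies in the closed $2\delta$-neighborhood of the other three. Tracking how many such replacements occur (a bounded number depending only on $n$) and accumulating the resulting error gives the $15\delta$ figure. So the second step of the proof is to invoke this quadrilateral estimate to control the geometric distortion introduced by the simplification, and to confirm that the ``simplified version'' referred to in the lemma statement is exactly the combination where we drop the finer conclusions of \cite[Lemma 10]{Ols92} and keep only the existence of the fellow-traveling pair $u,v$.

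The main obstacle is purely one of normalization of constants: \cite{Ols92} is written with its own conventions for $\delta$-hyperbolicity (thin triangles vs. insize vs. Gromov product) and its own numerical constants, and one must translate between them without error. I would proceed by fixing our convention (the $\delta$-slim triangles condition stated above), noting that it differs from Olshanskii's by at most a universal multiplicative factor, and then choosing the constants $10^3$ and $30$ large enough to dominate whatever Olshanskii's lemma produces after this translation; the constant $15\delta$ in the conclusion is then verified to be an upper bound for the resulting fellow-traveling constant. Given the slack built into the statement, no delicate estimate is needed — the only real work is to state Olshanskii's lemma accurately and carry the conversion factors through. Since this is a known combination used repeatedly in the literature on hyperbolically embedded subgroups, I would keep the argument brief, citing \cite{Ols92} for the substantive content and spelling out only the constant-chasing and the quadrilateral step.
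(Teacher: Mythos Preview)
Your proposal is correct and matches the paper's approach exactly: the paper does not give a proof of this lemma but simply states that it is ``a combination of a simplified version of Lemma 10 from \cite{Ols92} and the fact that in a $\delta$-hyperbolic space every side of a geodesic quadrilateral belongs to the closed $2\delta$-neighborhood of the other $3$ sides.'' Your plan to cite Olshanskii for the substantive content and only spell out the constant-chasing and the quadrilateral step is precisely what the paper intends.
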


We will also make use of group actions on combinatorial horoballs introduced by Groves and Manning \cite{GM}.

\begin{defn}\label{GMhoro}
Let $\Gamma$ be any graph.
The \emph{combinatorial horoball based on $\Gamma$}, denoted
$\mathcal {H}(\Gamma)$, is the graph formed as follows:
\begin{enumerate}
\item[1)] The vertex set $\mathcal{H}^{(0)}(\Gamma )$ is $\Gamma^{(0)}\times \left( \{0\}\cup \mathbb N \right)$.
\item[2)] The edge set $\mathcal{H}^{(1)}(\Gamma )$ consists of three types of edges.  The
  first two types are called \emph{horizontal}, and the last type is
  called \emph{vertical}.
\begin{enumerate}
\item[(a)] If $e$ is an edge of $\Gamma$ joining $v$ to $w$ then there is a
  corresponding edge $\bar{e}$ connecting $(v,0)$ to $(w,0)$.
\item[(b)] If $k\in \mathbb N$ and $0<\d _{\Gamma}(v,w)\leq 2^k$, then there is a single edge
  connecting $(v,k)$ to $(w,k)$.
\item[(c)] If $k\in \mathbb N$ and $v\in \Gamma^{(0)}$, there is an edge  joining
  $(v,k-1)$ to $(v,k)$.
\end{enumerate}
\end{enumerate}
\end{defn}

By $\d_\Gamma $ and $\d_{\mathcal H(\Gamma )} $ we denote the combinatorial metrics on $\Gamma $ and $\mathcal H (\Gamma )$, respectively. The following results were proved in \cite[Theorem 3.8 and Lemma 3.10]{GM}.

\begin{lem}\label{GM}
\begin{enumerate}
\item[(a)] For every connected graph $\Gamma $, $\mathcal H (\Gamma )$ is hyperbolic.
\item[(b)] For every two vertices $a,b\in \Gamma $, there exists a geodesic $p=p_1qp_2$ between $a$ and $b$ such that $p_1$ and $p_2$ entirely consist of vertical edges and $q$ has length at most $3$.
\end{enumerate}
\end{lem}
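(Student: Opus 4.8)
The plan is to derive a closed formula for distances in $\mathcal H(\Gamma)$ and read off both statements from it. Write a vertex of $\mathcal H(\Gamma)$ as a pair $(v,n)$ with $v\in\Gamma^{(0)}$, call $n$ its \emph{depth}, and put $D=\d_\Gamma(v,w)$ (finite, since $\Gamma$ is connected). Every edge is vertical (it changes the depth by $1$) or horizontal (it keeps the depth $n$ fixed and, at that depth, joins two $\Gamma$-vertices at $\d_\Gamma$-distance at most $2^n$). Given an edge path from $(v,n)$ to $(w,m)$, if $k^{*}$ is the largest depth it visits, then it has at least $(k^{*}-n)+(k^{*}-m)$ vertical edges (the total variation of the depth along the path) and, since the $\Gamma$-coordinate changes only along horizontal edges, each by at most $2^{k^{*}}$, at least $\lceil D/2^{k^{*}}\rceil$ horizontal edges; conversely, for each $k\ge\max\{n,m\}$ one realizes $(k-n)+(k-m)+\lceil D/2^{k}\rceil$ by ascending straight to depth $k$ in column $v$, running at depth $k$ along a $\Gamma$-geodesic in steps of size $2^{k}$, and descending straight to depth $m$ in column $w$. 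Hence
\[
\d_{\mathcal H(\Gamma)}\big((v,n),(w,m)\big)=\min_{k\ge\max\{n,m\}}\Big((k-n)+(k-m)+\lceil D/2^{k}\rceil\Big).
\]

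Part (b) follows quickly: for $a=(v,0)$ and $b=(w,0)$ the realizing path above already has the required shape $p_1qp_2$ with $p_1,p_2$ vertical and $q$ horizontal of length $\lceil D/2^{k}\rceil$, so it only remains to see that some optimal $k$ gives $\lceil D/2^{k}\rceil\le 3$. If $k_0$ is optimal and $h:=\lceil D/2^{k_0}\rceil\ge 4$, passing to $k_0+1$ adds $2$ to the vertical cost and brings the horizontal cost down to at most $\lceil h/2\rceil$, a net change of at most $2-\lfloor h/2\rfloor\le 0$; iterating (the horizontal cost eventually reaches $1$) gives an equally good, hence still optimal, $k$ with horizontal cost at most $3$.

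For part (a) I would prove uniform $\delta$-hyperbolicity by checking that geodesic triangles are uniformly slim, using (b). For each side fix the geodesic of the form (vertical ascent)(horizontal, length $\le 3$)(vertical descent) given by (b), and let $s_i$ be its peak depth; the formula (and the bookkeeping in part (b)) shows that $s_i$ agrees with $\log_2 D_i$ up to a bounded error, where $D_i$ is the $\Gamma$-distance between the endpoints of side $i$, and the key collapsing property is that two vertices at a common depth $j$ whose $\Gamma$-coordinates lie within $2^j$ of one another are at $\mathcal H(\Gamma)$-distance at most $1$. Since the horizontal portions are uniformly short, it suffices to bound the distance from a point $u$ on a vertical portion of one side to the union of the other two. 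Say $u$ sits at depth $j$ on the ascent out of the endpoint $x$ of the side $[xy]$; then $j$ is at most the peak of $[xy]$, which forces $\d_\Gamma(x,y)$ to be at least of order $2^j$. The side $[xz]$ descends into $x$ along the same column, up to its peak $s'$. If $j-s'$ is at most a fixed constant, $u$ is close to $[xz]$; otherwise $\d_\Gamma(x,z)$ is small compared to $2^j$, so the triangle inequality in $\Gamma$ makes $\d_\Gamma(y,z)$ of order $2^j$, hence the peak of $[yz]$ at least $j$ up to a bounded error, and since $\d_\Gamma(x,z)$ is small compared to $2^j$ the collapsing property identifies $u$, up to bounded distance, with a point of $[yz]$ in the column of $z$. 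Running through the finitely many positions of $u$ (and, symmetrically, points on the short horizontal portions) yields $\delta$ independent of $\Gamma$. Alternatively, one could verify a linear isoperimetric inequality directly from the layered structure of $\mathcal H(\Gamma)$, or feed the paths of (b) into a standard criterion for hyperbolicity in terms of a preferred family of paths.

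The arithmetic around the distance formula — the ceilings, and the fact (immediate from the formula) that the paths in (b) are genuinely geodesic — is routine. The step I expect to need real care, and which is the crux, is the slim-triangle verification in (a): one has to organize the case analysis on the relative sizes of the three peak depths, treat separately the points $u$ lying above one, above both, or below the neighbouring peaks, as well as points on the horizontal segments, and in every configuration combine the triangle inequality in $\Gamma$ with the collapsing estimate so that a single universal constant works for every graph $\Gamma$.
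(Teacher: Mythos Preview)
The paper does not prove this lemma; the sentence preceding it reads ``The following results were proved in \cite[Theorem 3.8 and Lemma 3.10]{GM}'', and no argument is given. Your write-up is therefore not competing with a proof in the paper but reconstructing one from scratch.

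What you do is essentially the Groves--Manning argument. The distance formula you derive is exactly their Lemma~3.10, and your proof of it (lower bound from total depth-variation plus the crude horizontal estimate $\le 2^{k^*}$ per step, upper bound from the explicit vertical--horizontal--vertical paths) is correct, as is your deduction of (b). One small point to add: for the slim-triangle argument in (a) you will need geodesics of the shape ``vertical up, horizontal of length $\le 3$, vertical down'' between \emph{arbitrary} vertices of $\mathcal H(\Gamma)$, not just those at depth $0$ as in (b); but this follows from your general distance formula by the same optimisation argument, so it is an easy extension. With that in hand, your slim-triangles sketch for (a) is sound and can be completed by the case analysis you describe; alternatively, since the formula gives $\d_{\mathcal H(\Gamma)}((v,0),(w,0))=2\log_2\d_\Gamma(v,w)+O(1)$, one can verify the Gromov four-point inequality directly and bypass the geodesic-tracking entirely.
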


Finally will need a well-known homological variant of the isoperimetric characterization of hyperbolic graphs. Let $\Sigma $ be a graph. Given a loop $c$ in $\Sigma $, we denote its homology class in $H_1(\Sigma , \mathbb Z_2)$ by $[c]$. Let $\ell (c)$ and $\diam (c)$ denote the length and the diameter of $c$, respectively.

\begin{prop} [Bowditch, {\cite[\S 7]{B_hyp}.}]\label{IP}
For any graph $\Sigma $ the following conditions are equivalent.
\begin{enumerate}
\item[(a)] $\Sigma $ is hyperbolic.

\item[(b)] There are some positive constants $M$, $L$ such that
if $c$ is a simple loop in $\Sigma $, then there exist loops $c_1,
\ldots , c_k$ in $\Sigma $ with $\diam(c_i)\le M$ for all $i=1,
\ldots, k$ such that
\begin{equation}\label{c}
[c]=[c_1]+\ldots +[c_k]
\end{equation}
and $k\le L\ell (c)$.
\end{enumerate}
\end{prop}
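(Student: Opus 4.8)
The statement asserts that hyperbolicity of a graph is equivalent to a linear homological isoperimetric inequality whose filling loops have uniformly bounded diameter. I would prove the two implications separately. The implication (a)$\Rightarrow$(b) is elementary and amounts to a Dehn--type reduction; the implication (b)$\Rightarrow$(a) is the substantial one, being a form of Gromov's principle that a sub-quadratic isoperimetric inequality forces hyperbolicity.

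For (a)$\Rightarrow$(b), assume $\Sigma$ is $\delta$--hyperbolic. Fix $M_0>8\delta$ so that every $M_0$--local geodesic in a $\delta$--hyperbolic space is a $(\lambda_0,c_0)$--quasi--geodesic for constants $\lambda_0,c_0$ depending only on $\delta$ (see \cite{BH}), let $\kappa=\kappa(\delta,\lambda_0,c_0)$ be the constant provided by Lemma \ref{qg}, and set $M=\max\{\,2M_0,\ 2\kappa\,\}$. I would then prove, by induction on $\ell(c)$, that every loop $c$ in $\Sigma$ is a $\mathbb Z$--linear combination of at most $\ell(c)$ loops of diameter at most $M$; this gives (b) with $L=1$, and the ``simple'' hypothesis of the statement is not even needed. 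If $\ell(c)\le M$, then $c$ itself has diameter at most $M$ and there is nothing to prove. If $c$ is not simple, split it at a repeated vertex into two loops of strictly smaller length and apply induction. If $c$ is simple with $\ell(c)>M$, consider sub-arcs of $c$ of length $M_0$: if for some such sub-arc $a$ the endpoints of $a$ are at distance strictly less than $M_0$, replace $a$ by a geodesic $\sigma$ between its endpoints, so that $[c]=[a\bar\sigma]+[c']$, where $a\bar\sigma$ is a loop of length less than $2M_0\le M$, hence of diameter less than $M$, and $c'$ is a loop with $\ell(c')\le\ell(c)-1$; induction applied to $c'$ then yields at most $\ell(c')+1\le\ell(c)$ loops altogether. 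Otherwise every sub-arc of $c$ of length $M_0$ is geodesic, so $c$, traversed once from any of its vertices, is an $M_0$--local geodesic, hence a $(\lambda_0,c_0)$--quasi--geodesic whose two endpoints coincide; by Lemma \ref{qg} it lies in the $\kappa$--neighbourhood of the constant geodesic at that vertex, so $\diam(c)\le2\kappa\le M$ and $c$ is itself an admissible single loop.

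For (b)$\Rightarrow$(a), the goal is to deduce that geodesic triangles in $\Sigma$ are uniformly thin, which is equivalent to hyperbolicity. Given a geodesic triangle with boundary loop $c$, I would use (b) to fill $c$ and organize the filling into a planar $2$--complex $D$ (a singular disc) with at most $L\ell(c)$ two--cells, each of $\Sigma$--diameter at most $M$, whose boundary maps to $c$; equivalently one works directly with the associated homological area function. For an interior cell $p$ and $r\ge0$, let $A(r)$ be the number of cells of $D$ within distance $r$ of $p$ in the dual graph and $\partial(r)$ the length of the boundary of this combinatorial ball; the boundary maps to a loop in $\Sigma$ of length $O(M\partial(r))$, which by (b) bounds $O(ML\partial(r))$ cells, so minimality of $D$ forces $A(r)=O(ML\partial(r))$, and combined with $\partial(r)=O(A(r+1)-A(r))$ this makes $A(r)$ grow exponentially in $r$. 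Since $A(r)\le L\ell(c)$ for all $r$, every cell of $D$ lies within dual distance $O(\log\ell(c))$ of $\partial D$, hence every point on one side of the triangle lies within $\Sigma$--distance $O(\log\ell(c))$ of the union of the other two sides. A bootstrapping step then upgrades this logarithmic estimate to a uniform constant and yields $\delta$--thin triangles; this is the point where the bound in (b) being linear (or at least sub-quadratic) enters essentially. The main obstacle is precisely this last direction: realizing a homological filling by a disc with well-controlled cells and carrying out the combinatorial isoperimetric/bisection argument that converts the linear bound into uniform thinness. Rather than reprove it from scratch I would invoke the work of Gromov, Ol'shanskii, Bowditch and Papasoglu (cf.\ \cite{B_hyp}).
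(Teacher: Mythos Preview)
The paper does not give its own proof of this proposition: it is quoted as a result of Bowditch \cite{B_hyp} and used as a black box later in the proof of Theorem~\ref{he}. So there is nothing in the paper to compare your argument against.

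That said, your sketch is essentially the standard route to this result and is correct in outline. For (a)$\Rightarrow$(b) your Dehn--type induction is fine; one small simplification is that once every $M_0$--subarc of a simple loop $c$ with $\ell(c)>M$ is geodesic, the quasi-geodesic inequality already forces $\ell(c)\le c_0$, so by enlarging $M$ to include $c_0$ that branch of the dichotomy becomes vacuous and you never actually need to invoke Lemma~\ref{qg}. For (b)$\Rightarrow$(a) you correctly identify the nontrivial step---passing from a linear homological filling to thin triangles via an exponential growth/coarea argument on a minimal diagram---and appropriately cite Bowditch for it rather than redoing the combinatorics. There is one genuine technical point you glide over: condition (b) gives a homological decomposition $[c]=\sum[c_i]$, not a van Kampen disc, and converting this into a planar $2$--complex with controlled cells requires an additional argument (this is one of the contributions of \cite{B_hyp}); your sketch assumes this conversion can be done but does not indicate how.
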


\subsection{Hyperbolically embedded subgroups}

Hyperbolically embedded subgroups were formally introduced by Dahmani, Guirardel, and the third author in \cite{DGO} although the idea goes back to the paper \cite{Osi06b}, where this notion was studied in the context of relatively hyperbolic groups.

Let $G$ be a group with a fixed collection of subgroups $\Hl $.
Given a (symmetric) subset $X\subseteq G$ such that $G$ is generated by $X$ together with the union of all $H_\lambda$'s,  we denote by $\G $ the Cayley graph of $G$ whose edges are labeled by letters from the alphabet $X\sqcup\mathcal H$, where
\begin{equation}\label{calH}
\mathcal H= \bigsqcup\limits_{\lambda \in \Lambda } H_\lambda.
\end{equation}
That is, two vertices $g,h\in G$ are connected by an edge going from $g$ to $h$ which is labeled by $a\in X\sqcup\mathcal H$ if and only if $a$ represents the element $g^{-1}h$ in $G$.

Notice that the unions in the definition above are disjoint. This means, for example, that for every $h\in H_\lambda \cap H_\mu$, the alphabet $\mathcal H$ will have two letters representing the element $h$ in $G$: one in $H_\lambda$ and the other in $H_\mu$. It can also happen that a letter from $\mathcal H$ and a letter from $X$ represent the same element of $G$.

In what follows, we think of the Cayley graphs $\Gamma (H_\lambda, H_\lambda  )$ as naturally embedded complete subgraphs of $\G $.

\begin{defn}\label{dl}
For every $\lambda \in \Lambda $, we introduce an extended  metric $\dl \colon H_\lambda \times H_\lambda \to [0, +\infty]$ as follows. Let $$\Delta_\lambda = \G \setminus E(\Gamma (H_\lambda, H_\lambda  ))$$ be the graph obtained from $\G$ by excluding all edges (but not vertices) of $\Gamma(H_\lambda, H_\lambda)$. Then for $h,k\in H_\lambda$, $\dl (h,k)$ is the length of a shortest path in $\Delta_\lambda $ that connects $h$ to $k$ (we think of $h$ and $k$ as vertices of $\G$ here). If no such a path exists, we set $\dl (h,k)=\infty $. Clearly $\dl $ satisfies the triangle inequality.
\end{defn}

\begin{defn}\label{he-def}
Let $G$ be a group, $X$ a (not necessary finite) symmetric subset of $G$. We say that a collection of subgroups $\Hl$ of $G$ is \emph{hyperbolically embedded in $G$ with respect to $X$} (we write $\Hl \h (G,X)$) if the following conditions hold.
\begin{enumerate}
\item[(a)] The group $G$ is generated by $X$ together with the union of all $H_\lambda$ and the Cayley graph $\G $ is hyperbolic.
\item[(b)] For every $\lambda\in \Lambda $, the extended metric space $(H_\lambda, \dl )$ is proper, i.e., any ball of finite radius in $H_\lambda $ contains finitely many elements.
\end{enumerate}
Further we say  that $\Hl$ is \emph{hyperbolically embedded} in $G$ and write $\Hl\h G$ if $\Hl\h (G,X)$ for some $X\subseteq G$.
\end{defn}

For details and examples we refer to \cite{DGO}. The following proposition relates the notions of a hyperbolically embedded subgroup and a relatively hyperbolic group. Readers unfamiliar with relatively hyperbolic groups may think of it as a definition.

\begin{prop}[{\cite[Proposition 4.28]{DGO}}]\label{relhypdef}
Let $G$ be a group, $\Hl$ a finite collection of subgroups of $G$. Then $G$ is hyperbolic relative to $\Hl$ if and only if $\Hl\h (G,X)$ for some finite subset $X\subseteq G$.
\end{prop}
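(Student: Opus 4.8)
The plan is to prove the two implications by passing through Bowditch's characterization of relative hyperbolicity: a finite collection $\Hl$ is hyperbolic in $G$ relative to $G$ (i.e. $G$ is hyperbolic relative to $\Hl$) if and only if $G$ admits an action on a connected, fine, hyperbolic graph with finitely many orbits of edges, finite edge stabilizers, and in which the vertex stabilizers of infinite valence are precisely the conjugates of the $H_\lambda$. Recall that a graph is \emph{fine} if every edge lies in only finitely many embedded circuits of each fixed length.

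Suppose first that $\Hl\h(G,X)$ with $X$ finite. I would build the coned-off graph $\mathcal K$ whose vertices are the elements of $G$ together with one cone vertex $v_{\lambda,gH_\lambda}$ for each pair $(\lambda,gH_\lambda)$, and whose edges are the $X$-edges of $\Gamma(G,X)$ together with an edge joining $g$ to $v_{\lambda,gH_\lambda}$ for all $g$ and $\lambda$. Finiteness of $X$ and of $\Lambda$ gives finitely many edge orbits; the $X$-edge stabilizers are finite and the stabilizer of $v_{\lambda,gH_\lambda}$ is $gH_\lambda g^{-1}$. The inclusion of $G$ into the vertex set of $\mathcal K$ is a quasi-isometry onto a coarsely dense subset---an $\mathcal H$-edge of $\G$ at $g$ labelled by $h\in H_\lambda$ corresponds to the length-two path $g-v_{\lambda,gH_\lambda}-gh$ and conversely---so $\mathcal K$ is hyperbolic because $\G$ is, by condition (a). Finally $\mathcal K$ is fine, and this is exactly where condition (b) of Definition \ref{he-def} enters: an embedded circuit of length at most $n$ through $v_{\lambda,gH_\lambda}$ meets the coset $gH_\lambda$ in exactly two vertices $gh,gk$ and, after translating by $g^{-1}$ and replacing any further cone detours by $\mathcal H$-edges, yields a path of length at most $n$ in $\Delta_\lambda$ from $h$ to $k$ (such a path uses no edge of $\Gamma(H_\lambda,H_\lambda)$, since $\mathcal H$ is a disjoint union of the $H_\mu$), so properness of $(H_\lambda,\dl)$ bounds the number of admissible $k$; circuits through no cone vertex are controlled by hyperbolicity of $\G$. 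Bowditch's theorem then yields that $G$ is hyperbolic relative to $\Hl$.

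Conversely, assume $G$ is hyperbolic relative to $\Hl$. Then $G$ is finitely generated relative to $\Hl$; fix a finite relative generating set $X$, so that $G=\langle X\cup\mathcal H\rangle$ and the generation part of condition (a) holds. Hyperbolicity of $\G$ I would deduce from the linear relative isoperimetric inequality via Proposition \ref{IP}: every simple loop in $\G$ bounds, homologically, a sum of boundedly many $G$-translates of the finitely many, bounded-diameter defining relators together with loops supported inside a single complete subgraph $\Gamma(H_\lambda,H_\lambda)$, each of diameter $1$, and the number of these is linear in the length of the loop. Properness of $(H_\lambda,\dl)$ then follows from the bounded coset penetration property: a shortest path in $\Delta_\lambda$ between distinct $h,k\in H_\lambda$ cannot travel along $\Gamma(H_\lambda,H_\lambda)$ and cannot be long, so BCP forces it to stay in a bounded region of $\G$, bounding $|h^{-1}k|_X$ and hence leaving only finitely many possibilities for $k$.

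I expect the delicate point to be the fineness-versus-properness translation in the forward direction. One must handle carefully that $\mathcal H$ may contain distinct letters representing the same element of $G$ (so an edge of $\G$ is not determined by its endpoints), verify that bounded-length embedded circuits in $\mathcal K$ genuinely correspond to $\dl$-bounded configurations in the $H_\lambda$, and separately bound the circuits avoiding all cone vertices using hyperbolicity of $\G$. By contrast, the reverse direction is comparatively routine once one commits to a specific standard definition of relative hyperbolicity and invokes the relevant equivalence theorems, the only genuine computation being the isoperimetric estimate feeding into Proposition \ref{IP}.
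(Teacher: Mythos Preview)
The paper does not prove this proposition; it is quoted from \cite[Proposition 4.28]{DGO} and the authors explicitly invite the reader to treat it as a definition. So there is no ``paper's own proof'' to compare against.

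Your route via Bowditch's fine-graph model is legitimate, but it works harder than necessary given the tools already assembled in the paper. Theorem~\ref{ipchar} (quoted from \cite[Theorem 4.24]{DGO}) says that $\Hl\h(G,X)$ if and only if there is a strongly bounded relative presentation of $G$ with respect to $X$ and $\Hl$ having linear relative isoperimetric function. Osin's definition of relative hyperbolicity in \cite{O06} is precisely that such a presentation exists with $X$ finite (and the collection of subgroups finite). With that choice of definition, the proposition is essentially a restatement of Theorem~\ref{ipchar} specialized to finite $X$; no passage through coned-off graphs, fineness, or BCP is required. This is almost certainly the intended reading in \cite{DGO}.

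If you insist on the Bowditch route, your fineness argument has a gap worth flagging. You bound, via properness of $(H_\lambda,\dl)$, the number of exit points $k$ from the cone vertex $v_{\lambda,H_\lambda}$ in an embedded circuit of length at most $n$. But fineness asks for finitely many \emph{circuits}, not finitely many neighbours; since the remaining arc from $k$ back to $h$ may pass through other infinite-valence cone vertices, bounding $k$ alone is not enough. One must apply the properness condition at every cone vertex encountered and argue inductively on the length (or invoke one of Bowditch's equivalent angle-metric formulations of fineness). This is standard and fixable, but your sketch as written does not address it. The reverse direction is fine as a plan, though ``BCP forces it to stay in a bounded region'' compresses a genuine argument.
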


We will need the following.

\begin{lem}[{\cite[Corollary 4.27]{DGO}}]\label{rhX}
Let $G$ be a group, $\Hl$ a collection of subgroups of $G$, $X_1$, $X_2$ two relative generating sets of $G$ modulo $\Hl$ such that $X_1\bigtriangleup X_2$ is finite. Then $\Hl \h (G, X_1)$ if and only if $\Hl\h (G, X_2)$.
\end{lem}

In Sections 4.2 and 4.3 we will use terminology and results that first appeared in \cite{O06} and \cite{O07} in the context of relatively hyperbolic groups and then were generalized to hyperbolically embedded subgroups in \cite{DGO}.

\begin{defn}\label{comp}
Let $q$ be a path in the Cayley graph $\G $. A (non-trivial) subpath $p$ of $q$ is called an \emph{$H_\lambda $--subpath}, if the label of $p$ is a word in the alphabet $H_\lambda   $. An $H_\lambda$--subpath $p$ of $q$ is an {\it $H_\lambda $--component} if $p$ is not contained in a longer $H_\lambda$--subpath of $q$; if $q$ is a loop, we require in addition that $p$ is not contained in any longer $H_\lambda $--subpath of a cyclic shift of $q$.

Two $H_\lambda $--subpaths $p_1, p_2$ of a path $q$ in $\G $ are called {\it connected} if there exists an edge
$c$ in $\G $ that connects some vertex of $p_1$ to some vertex of $p_2$ is labeled by an element of $H_\lambda$. In algebraic terms this means that all vertices of $p_1$ and $p_2$ belong to the same left coset of $H_\lambda $. A component of a path $p$ is called \emph{isolated} in $p$ if it is not connected to any other component of $p$.
\end{defn}

It is convenient to enlarge the domain of the extended metric $\dl\colon H_\lambda\times H_\lambda \to [0, \infty]$ defined above to $G\times G$ by assuming $$
\dl (f,g)\colon =\left\{
\begin{array}{ll}
\dl (f^{-1}g,1),& {\rm if}\;  f^{-1}g\in H_\lambda \\
\dl (f,g)=\infty , &{\rm  otherwise.}
\end{array}\right.
$$

Given a path $p$ in the Cayley graph $\G$, we let $p_-$, $p_+$ denote the origin and the terminal point of $p$, respectively. The following result, which is a simplified version of \cite[Proposition 4.13]{DGO}, will play a crucial role in Section 4.2.

\begin{lem}\label{ngon}
Assume that $\Hl\h (G,X)$. Then there exists a constant $D$ such that for any geodesic $n$--gon $p$ in $\G$, any $\lambda \in \Lambda$, and any isolated $H_\lambda$--component $a$ of $p$, we have $\dl (a_-, a_+)\le Dn$.
\end{lem}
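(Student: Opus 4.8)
The plan is to first reinterpret $\dl(a_-,a_+)$ geometrically and then to produce the required path by shortening an obvious but far too long one, using hyperbolicity of $\G$. Set $C:=a_-H_\lambda$, so that $a_+\in C$ as well. Left translation by $a_-$ is a label-preserving isometry of $\G$ carrying $\Delta_\lambda$ onto $\G\setminus a_-E(\Gamma(H_\lambda,H_\lambda))$, i.e.\ the graph obtained from $\G$ by deleting exactly those $H_\lambda$-labeled edges both of whose endpoints lie in $C$. Hence, by Definition~\ref{dl} and its extension to $G$, the number $\dl(a_-,a_+)$ is the length of a shortest path in $\G$ from $a_-$ to $a_+$ traversing no such edge; call a path with this property \emph{admissible}. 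It therefore suffices to exhibit an admissible path of length at most $Dn$.

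The natural candidate is the path $q$ obtained from the loop $p$ by deleting the component $a$: as the sides of $p$ are geodesic, $q$ runs from $a_+$ to $a_-$ and is a concatenation of at most $n+1$ geodesics. This is the single place where the isolation hypothesis enters directly: if $q$ traversed an $H_\lambda$-labeled edge with both endpoints in $C$, that edge would lie in some $H_\lambda$-component $a'\neq a$ of $p$ all of whose vertices lie in $C$, and then $a$ and $a'$ would be connected in the sense of Definition~\ref{comp}, contradicting that $a$ is isolated in $p$. So $q$ is admissible, whence $\dl(a_-,a_+)\le\ell(q)=\ell(p)-\ell(a)$. The content of the lemma is that $\ell(p)$ must be traded for a bound of the form $Dn$, with $D=D(\delta)$ for a hyperbolicity constant $\delta$ of $\G$.

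To accomplish this I would shorten $q$ using hyperbolicity. If $\ell(p)<3\cdot10^{4}\,\delta\,n$ we are already done. Otherwise Lemma~\ref{Ols}, applied with $c:=30\delta$ and $S$ the full set of sides of $p$, yields two distinct sides carrying subsegments $u,v$ with $\min\{\ell(u),\ell(v)\}>30\delta$ that lie in the $15\delta$-neighborhoods of one another. Joining the matched endpoints of the fellow-traveling geodesics $u$ and $v$ by two short ``bridge'' paths of length $O(\delta)$ cuts $p$ into geodesic polygons of smaller length, one of which still has $a$ as an isolated $H_\lambda$-component; an admissible path for that polygon, extended by the two bridges, is admissible for $p$. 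Carried out with due care --- so that the number of sides does not proliferate --- this reduces $p$ to the short case and gives $\dl(a_-,a_+)\le Dn$. (Alternatively, since a geodesic $n$-gon is $O(n\delta)$-thin, one may fill the loop $p$ via Bowditch's criterion, Proposition~\ref{IP}, and route an admissible path from $a_-$ to $a_+$ across only the $O(n)$ bounded filling loops situated near the $O(n)$ ``corners'' of $p$.)

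The crux, and the only genuinely delicate part, is exactly the bookkeeping that makes this shortening legitimate: one must arrange it so that simultaneously (i) the number of sides --- hence the constant multiplying $n$ --- stays bounded throughout; (ii) the polygon obtained at each stage still has $a$ as an \emph{isolated} $H_\lambda$-component; and (iii) each bridge path introduced is itself admissible, i.e.\ avoids $H_\lambda$-labeled edges internal to $C$ --- which requires carrying the isolation information through the whole construction rather than invoking it only once, for $q$. Getting all three at once with a uniform choice of constants is the heart of the matter; here I would follow, in a simplified form, the proof of Proposition~4.13 in \cite{DGO}.
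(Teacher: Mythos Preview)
The paper does not prove this lemma; it is cited as ``a simplified version of \cite[Proposition 4.13]{DGO}''. So the comparison is with the argument in \cite{DGO}, and your sketch diverges from it at the decisive step.

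Your reduction to finding an admissible path (one avoiding $H_\lambda$-edges internal to $C=a_-H_\lambda$) is correct, as is the observation that $q=p\setminus a$ is admissible by isolation of $a$. The gap is exactly where you place it. Your proposed shortening, via Lemma~\ref{Ols} or Proposition~\ref{IP}, uses only the hyperbolicity of $\G$ and gives no control over how the short bridge paths meet $C$: a bridge of length $O(\delta)$ is an arbitrary short path in $\G$, and nothing in your setup prevents it from traversing an $H_\lambda$-edge with both endpoints in $C$. Your item (iii) names this obstacle but does not overcome it; the isolation of $a$ in $p$ is a statement about the other $H_\lambda$-components of the \emph{boundary} $p$, and says nothing about short paths elsewhere in $\G$.

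Your final sentence --- that you would ``follow, in a simplified form, the proof of Proposition~4.13 in \cite{DGO}'' --- is where the sketch becomes misleading. That proof is not a geometric shortening argument of the kind you describe. It is diagrammatic: one fills the polygon by a van Kampen diagram over a strongly bounded relative presentation with linear relative isoperimetric function (supplied by Theorem~\ref{ipchar}; this is where the full hypothesis $\Hl\h(G,X)$, and not merely hyperbolicity of $\G$, is used), and then reads off an admissible path from $a_+$ to $a_-$ along internal edges of the diagram, bounding its length combinatorially via the number of $\mathcal R$-cells. If you intend to defer to \cite{DGO} in the end, the honest route is to do so from the beginning; the Ol'shanskii-lemma approach you outline is a genuinely different strategy, and as written it stops precisely at the step you yourself identify as the crux.
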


We will also use an isoperimetric characterization of hyperbolically embedded subgroups, which generalizes the corresponding definition of relatively hyperbolic groups. We briefly recall all necessary definitions and refer to \cite{DGO} for more details.

Let $G$, $\Hl$, $\mathcal H$, and $X$ be as above. The group $G$ is
a quotient group of the free product
\begin{equation}
F=\left( \ast _{\lambda\in \Lambda } H_\lambda  \right) \ast F(X),
\label{F}
\end{equation}
where $F(X)$ is the free group with the basis $X$. Suppose that kernel of the natural homomorphism $F\to G$ is a normal closure of a subset $\mathcal R$ in the group $F$. For every $\lambda \in \Lambda $, we denote by $\mathcal S_\lambda
$ the set of all words over the alphabet $ H_\lambda $ that represent the identity in $H_\lambda $. Then the group $G$ has the presentation
\begin{equation}
\langle X, \mathcal H  \mid  \mathcal S\cup \mathcal R\rangle,
\label{rp}
\end{equation}
where $\mathcal S=\bigcup\limits_{\lambda\in \Lambda } \mathcal S_\lambda $. In what follows, presentations of this type are called {\it relative presentations of $G$ with respect to $X$ and $\Hl$}.

\begin{defn}
A relative presentation (\ref{rp}) is called \emph{bounded} if the lengths of words from the set $\mathcal R$ are uniformly bounded; if, in addition, for every $\lambda \in \Lambda$, the set of letters from $H_\lambda$ that appear in words $R\in \mathcal R$ is finite, the presentation is called \emph{strongly bounded}.
\end{defn}

Let $\Delta $ be a van Kampen diagram over (\ref{rp}). As usual, a $2$-cell of $\Delta $ is called an $\mathcal R$-cell (respectively, a $\mathcal S$-cell) if its boundary is labeled by a word from $\mathcal R$ (respectively $\mathcal S$).

Given a word $W$ in the alphabet $X\sqcup \mathcal H$ such that $W$
represents $1$ in $G$, there exists an expression
\begin{equation}
W=_F\prod\limits_{i=1}^k f_i^{-1}R_i^{\pm 1}f_i \label{prod}
\end{equation}
where the equality holds in the group $F$, $R_i\in \mathcal R$, and
$f_i\in F $ for $i=1, \ldots , k$. The smallest possible number
$k$ in a representation of the form (\ref{prod}) is called the
{\it relative area} \label{i-ra} of $W$ and is denoted by $Area^{rel}(W)$.

Obviously $Area^{rel}(W)$ can also be defined in terms of van Kampen diagrams. Given a diagram $\Delta $ over (\ref{rp}), we define its {\it relative area}, $Area^{rel} (\Delta )$, to be the number of $\mathcal R$-cells in $\Delta $. Then $Area^{rel}(W)$ is the minimal relative area of a van Kampen diagram over (\ref{rp}) with boundary label $W$.

\begin{defn}
A function  $f\colon \mathbb N\to \mathbb N$ is a {\it relative isoperimetric function} of (\ref{rp}) if for every $n\in \mathbb N$ and every word $W$ of length at most $n$ in the alphabet $X\sqcup \mathcal H$ representing $1$ in $G$, we have $Area^{rel} (W)\le f(n)$. Thus, unlike the standard isoperimetric function, the relative one only counts $\mathcal R$-cells.
\end{defn}

The following can be found in \cite[Theorem 4.24]{DGO}.

\begin{thm}\label{ipchar}
Let $G$ be a group, $\Hl $ a collection of subgroups of $G$, $X$ a relative generating set of $G$ with respect to $\Hl $. Then $\Hl\h (G,X)$ if and only if there exists a strongly bounded relative presentation of $G$ with respect to $X$ and $\Hl $ with linear relative isoperimetric function.
\end{thm}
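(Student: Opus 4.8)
The plan is to prove the two implications separately, using Bowditch's criterion (Proposition~\ref{IP}) and the complete-subgraph structure of cosets for the direction that produces hyperbolicity, and Lemma~\ref{ngon} together with the standard ``diagram surgery'' technology for the direction that produces the presentation. Throughout I would use the identification of the relative area of a word $W$ with the minimal number of $\mathcal R$-cells in a van Kampen diagram over $(\ref{rp})$ with boundary label $W$, write $\G$ for the relative Cayley graph, and recall that $\Hl\h(G,X)$ means precisely that $\G$ is hyperbolic and each $(H_\lambda,\dl)$ is proper.

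\emph{From a strongly bounded presentation with linear relative isoperimetric function to $\Hl\h(G,X)$.} Let $C$ be the linear relative isoperimetric constant and $M_0$ a bound on the lengths of the words in $\mathcal R$. To see that $\G$ is hyperbolic I would verify condition (b) of Proposition~\ref{IP}. Given a simple loop $c$ of length $n$, its label represents $1$ in $G$, so there is a van Kampen diagram $\Delta$ over $(\ref{rp})$ with boundary $c$ and at most $Cn$ $\mathcal R$-cells; mapping the $1$-skeleton of $\Delta$ into $\G$, each $\mathcal R$-cell bounds a loop of diameter $\le M_0$ and each $\mathcal S_\lambda$-cell maps into a single coset $gH_\lambda$, a complete subgraph of $\G$, hence a loop of diameter $\le 1$. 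The number of $\mathcal S$-cells is uncontrolled, but because $\mathcal H=\bigsqcup_\lambda H_\lambda$ is a \emph{disjoint} union, two $\mathcal S$-cells sharing an edge must be $\mathcal S_\lambda$-cells mapping into the same coset; so a connected union of $\mathcal S$-cells is a sub-surface of $\Delta$ lying over one coset, and every edge of its boundary is either on $\partial\Delta$ or shared with an $\mathcal R$-cell. The total length of all such boundaries is therefore $O(n)$, and each such boundary loop, living in one complete subgraph of $\G$, is a sum of linearly many triangles of diameter $\le 1$. This exhibits $[c]$ as a sum of $O(n)$ loops of diameter $\le M_0$, so $\G$ is hyperbolic. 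For properness of $(H_\lambda,\dl)$, given $R$ and $h\in H_\lambda$ with $\dl(1,h)\le R$, I would take a shortest path $q$ from $1$ to $h$ in $\G\setminus E(\Gamma(H_\lambda,H_\lambda))$, let $e$ be the edge labeled by the letter $h\in\mathcal H$ from $1$ to $h$, and fill the loop $c=q\bar e$ by a diagram $\Delta$ over $(\ref{rp})$ with at most $C(R+1)$ $\mathcal R$-cells. The edge $e$ lies on $\partial\Delta$; if the cell across it is an $\mathcal R$-cell, then $h$ is one of the finitely many $H_\lambda$-letters occurring in $\mathcal R$. Otherwise that cell is an $\mathcal S_\lambda$-cell lying over the coset $H_\lambda$ itself; letting $\Delta_0$ be the connected union of $\mathcal S_\lambda$-cells over $H_\lambda$ containing it, the fact that $q$ avoids $E(\Gamma(H_\lambda,H_\lambda))$ forces every boundary edge of $\Delta_0$ other than $e$ to be shared with an $\mathcal R$-cell, hence labeled by one of the finitely many $H_\lambda$-letters appearing in $\mathcal R$. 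Since there are only $O(R)$ such edges, $h$ is a product of boundedly many letters from a fixed finite subset of $H_\lambda$, whence only finitely many $h$ have $\dl(1,h)\le R$. Thus $\Hl\h(G,X)$.

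\emph{From $\Hl\h(G,X)$ to such a presentation.} Assuming $\G$ is $\delta$-hyperbolic and each $(H_\lambda,\dl)$ proper, I would fix a large $\varepsilon$ (depending on $\delta$) and let $\mathcal R$ be the set of all words of length $\le\varepsilon$ over $X\sqcup\mathcal H$ representing $1$ in $G$ in which every $H_\lambda$-letter $h$ satisfies $\dl(1,h)\le D\varepsilon$, where $D$ is the constant of Lemma~\ref{ngon}. Properness makes $\{h:\dl(1,h)\le D\varepsilon\}$ finite, so the relative presentation is strongly bounded. Because $\G$ is hyperbolic, every word over $X\sqcup\mathcal H$ of length $n$ representing $1$ fills, in the ordinary sense, by $O(n)$ words of length $\le\varepsilon$; so it suffices to bound by a constant the relative area of a single word $V$ of length $\le\varepsilon$ representing $1$. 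After absorbing consecutive same-type $H_\lambda$-letters into $\mathcal S_\lambda$-cells (free of charge for the relative area), the loop of $V$ is a geodesic polygon of length $\le\varepsilon$ whose $H_\lambda$-components are single edges: if all of these are isolated, Lemma~\ref{ngon} bounds every $H_\lambda$-letter of $V$ in the $\dl$-metric, so $V\in\mathcal R$ and the relative area is $\le 1$; otherwise I would take two consecutive (hence connected) $H_\lambda$-components lying in a common coset, insert a back-and-forth trip along the $H_\lambda$-edge joining their outer endpoints, absorb two $\mathcal S_\lambda$-cells, and thereby split the loop into two loops of the same total length while strictly decreasing a well-founded complexity (the number of components lying in cosets carrying more than one component, summed over all current loops). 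Iterating reaches $O(\varepsilon)$ isolated-type loops in $O(\varepsilon)$ steps, giving relative area $O(\varepsilon)$ for $V$; feeding this into the ordinary $O(n)$-filling yields a linear relative isoperimetric function, and in particular shows $\mathcal S\cup\mathcal R$ does present $G$.

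\emph{The hard part.} I expect the delicate step to be the last surgery: one must produce a reduction to isolated $H_\lambda$-components that introduces \emph{no} $\mathcal R$-cells, keeps all loop lengths under $\varepsilon$ (so that the linear bound of Lemma~\ref{ngon} actually yields the constant $D\varepsilon$ in the definition of $\mathcal R$), and strictly decreases a genuinely well-founded quantity — so the correct choice of complexity measure, together with the verification that consecutive connected components in a common coset do exist and that the preprocessing really leaves single-edge $H_\lambda$-components, is where the care is needed. This is essentially the diagram-surgery machinery developed for relatively hyperbolic groups and extended to arbitrary families of subgroups in \cite{DGO}; beyond this bookkeeping I anticipate no conceptual difficulty, since the other direction rests only on the clean combination of Bowditch's criterion with the coset-completeness observation for $\mathcal S$-cells and the boundary analysis of $\Delta_0$.
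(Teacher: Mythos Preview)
The paper does not prove Theorem~\ref{ipchar}: it is quoted verbatim from \cite[Theorem 4.24]{DGO} and used as a black box (see the sentence ``The following can be found in \cite[Theorem 4.24]{DGO}'' immediately preceding the statement). So there is no in-paper proof to compare your proposal against.

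That said, your sketch is broadly in line with the argument actually given in \cite{DGO}. A couple of remarks on places where your outline would need tightening. In the properness argument, the region $\Delta_0$ need not be simply connected, so ``the boundary of $\Delta_0$'' may have several components; you have to argue that $h$ is read off the boundary component containing $e$, and that this component is a product in $H_\lambda$ of the remaining boundary letters (which indeed come from the finite set of $H_\lambda$-letters in $\mathcal R$). In the hyperbolicity argument, your grouping of $\mathcal S$-cells into coset-regions and the bound on their total boundary length is correct, but the step ``each such boundary loop \dots\ is a sum of linearly many triangles'' should be stated for each boundary \emph{component} of each region, and you should note that the number of such components is also $O(n)$. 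For the converse direction, your surgery description is essentially the right one; the complexity measure that works cleanly in \cite{DGO} is the total number of $H_\lambda$-components over all current loops, which strictly decreases at each splitting step (two components are merged into one via the inserted $\mathcal S_\lambda$-cell), and one checks separately that each resulting loop still has length $\le \varepsilon$. None of this is a conceptual gap, just the bookkeeping you yourself flagged.
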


\subsection{Weights, length functions, and metrics on groups}

Throughout this section, let $G$ denote a group. When dealing with metrics on $G$, it is often convenient to restrict to those metrics which take values in non-negative integers. For this reason, we formally introduce the following.

\begin{defn}[The set of integral valued left invariant metrics on a group]\label{MG}
Let $\MG$ denote the set of all left invariant metrics on $G$ taking values in $\mathbb N\cup \{ 0\}$. That is, a function $\d\colon G\times G \to \mathbb N\cup \{0\}$ belongs to $\MG$ if and only if it satisfies the following conditions.
\begin{enumerate}
\item[({\bf M$_1$})] $\d$ is a metric on $G$.
\item[({\bf M$_2$})] For all $f,g,h\in G$, $\d (fg,fh)=\d(g,h)$.
\end{enumerate}
\end{defn}

One way to define a metric on a group is through weight functions.

\begin{defn}[Weights on groups]\label{WF}
A \emph{weight function} on $G$ is a map  $w\colon G\to \mathbb N\cup \{0, \infty\}$  satisfying the following conditions for all $g\in G$.
\begin{enumerate}
\item[({\bf W$_1$})] $w (g)=0$ if and only if $g=1$.
\item[({\bf W$_2$})] $w (g)=w(g^{-1})$.
\end{enumerate}
\end{defn}

To every weight function $w$ on $G$ one can associate a function $\d_w\colon G\to \mathbb N\cup \{0, \infty\}$ by letting
\begin{equation}\label{ellw}
\d_w(f,g)=\min\left\{ \left. \sum_{i=1}^k w(x_i) \; \right|\; x_1, \ldots , x_k\in G,\;x_1\cdots x_k=f^{-1}g\right\}
\end{equation}
for every $f,g\in G$. Here the minimum is taken over all possible decompositions $f^{-1}g=x_1\cdots x_k$ of $f^{-1}g$. If the minimum in (\ref{ellw}) is attained at a decomposition $f^{-1}g=x_1\cdots x_k$, we call it a \emph{geodesic decomposition} of $f^{-1}g$.

It is easy to see that $\d_w\in \MG$ if and only if the set
$$
supp (w) =\{ g\in G\mid w(g)<\infty \}
$$
generates $G$. In this case we call $\d_w$ the \emph{ metric on $G$ associated to the weight function} $w$.

We now discuss the notion of an induced metric used later in this paper. Let $\Hl$ be a collection of subgroups of $G$, $X$ a generating set of $G$ relative to $\Hl$. That is, we assume that
\begin{equation}\label{GXH}
G=\left\langle X\cup \left(\bigcup_{\lambda\in \Lambda} H_\lambda \right)\right\rangle.
\end{equation}
Suppose that we are also given a collection $C=\{ \d_{H_\lambda}\}_{\lambda \in \Lambda}$ of metrics $\d_{H_\lambda}\in \mathcal M(H_\lambda)$.

First, for every $g\in G$, we define $$\Lambda(g)=\{ \lambda\in \Lambda \mid g\in H_\lambda\}.$$ Further, let
$$
w_{C,X}(g)=\left\{
\begin{array}{ll}
0, & {\rm if }\; g=1,\\
1, & {\rm if}\; g\in X\setminus \{ 1\},\\
\min \{ \d_{H_\lambda} (1,g) \mid \lambda\in \Lambda (g)\}, & {\rm if}\; \Lambda (g)\ne \emptyset \; {\rm and\; } g\notin X,\\
\infty,& {\rm in\; all\; other\; cases}.
\end{array}\right.
$$
Obviously $w_{C,X}$ is a weight function on $G$. Let $\d_{C,X}=\d_{w_{C,X}}$ be the metric on $G$ associated to $w_{C,X}$.

\begin{defn}\label{IMG}
In the notation introduced above, we call $\d_{C,X}\in \MG$ the \emph{metric on $G$ induced by the collection $C$ (and corresponding to a relative generating set $X$)}.
\end{defn}

\begin{rem}
Alternatively, the induced metric can be defined in the following non-constructive way. Given two metrics $\d_1, \d_2\in \MG$, we write $\d_1\preceq \d_2$ if $\d_1(f,g)\le \d_2(f,g)$ for all $f,g\in G$. Obviously $\preceq $ is a partial order on $\MG$. Then $\d_{C,X}$ is the greatest element of the subset
\begin{equation}\label{subM}
\{ \d\in \MG \mid \d (1,x) =1 \; \forall \, x\in X\setminus \{ 1\} \; {\rm and }\; \d\vert _{H_\lambda} \preceq   \d_{H_\lambda}  \forall\, \lambda \in \Lambda\}.
\end{equation}
Indeed it is clear that $\d_{C,X}$ defined above belongs to the subset described by (\ref{subM}).  The fact that it is the greatest element follows immediately from (\ref{ellw}) and the triangle inequality.
\end{rem}

We conclude with an elementary example.

\begin{ex}\label{indGS}
Let $G$ be a group and let $H$ be a subgroup of $G$. Suppose that $Y$ is a generating set of $H$ and let $\d_Y$ denote the corresponding word length. Let $C=\{ \d_Y\}$. Let $X$ be a relative generating set of $G$ with respect to $H$. Then the corresponding induced metric $\d_{C,X}$ on $G$ is $\d_{X\cup Y}$, the word metric on $G$ corresponding to the generating set $X\cup Y$.
\end{ex}

%%%%%%%%%%%%%%%%%%%%%%%%%%%%%%%%%%%%%%%%%%%%%%%%%%%%%%%%%%%%%%%%%%

\section{The extension problem and induced actions}

%%%%%%%%%%%%%%%%%%%%%%%%%%%%%%%%%%%%%%%%%%%%%%%%%%%%%%%%%%%%%%%%%%

Throughout the rest of the paper, we deal with collections of subgroups of a given group. Sometimes we require the collections to be finite while in other cases our arguments go through without this assumption. To make it easier for the reader to distinguish between these situations we adopt the following convention: we use $\Hi$ to denote a finite collection of subgroups and $\Hl$ to denote collections which are not necessarily finite.

\subsection{The extension problem for group actions on metric spaces}

We begin by formalizing the extension problem for group actions on metric spaces in the most general situation.
Let $G$ be a group and let $\Hl$ be a (finite or infinite) collection of subgroups of $G$.

\begin{defn}[Extension of subgroups' actions]\label{defext}
We say that an action $G \curvearrowright S$ of a group $G$ on a metric space $S$ is an \emph{extension} of a collection of actions
$\{H_\lambda \curvearrowright R_\lambda\}_{\lambda\in \Lambda}$
of subgroups $H_\lambda \le G$ on metric spaces $R_\lambda$, $\lambda \in \Lambda$,
if for every $\lambda \in \Lambda $, there exists a coarsely $H_\lambda $-equivariant quasi-isometric embedding $R_\lambda \to S$.
\end{defn}

\begin{prob}[Extension problem]\label{EP}
Given a group $G$, a collection of subgroups $\Hl$ of $G$, and a collection of actions $\{H_\lambda \curvearrowright R_\lambda\}_{\lambda\in \Lambda}$ as above, does there exist an extension of $\{H_\lambda \curvearrowright R_\lambda\}_{\lambda\in \Lambda}$ to a $G$-action on a metric space?
\end{prob}

\begin{defn}
Given a group $G$ and a collection of subgroups $\Hl$ of $G$, we say that the \emph{extension problem is solvable for $\Hl$ and $G$} if the answer to Problem \ref{EP} is affirmative for every collections of actions $\{H_\lambda \curvearrowright R_\lambda\}_{\lambda\in \Lambda}$.
\end{defn}

\begin{rem}\label{onesubgr}
If the extension problem is solvable for a collection of subgroups $\Hl$ of a group $G$, then  it is obviously solvable for every $H_\lambda \le G$. The converse is false. Indeed there is an obvious obstacle: for any two subgroups $H_1$, $H_2$ from the given collection, the given actions of $H_1\cap H_2$ on the corresponding spaces $R_1$ and $R_2$ must be equivalent for the extension to exist.
\end{rem}

As we already mentioned in the introduction, the extension problem may not be solvable even for a single subgroup. Let us discuss some further examples.

Recall that a group $G$ is called \emph{strongly bounded} if every isometric action of $G$ on a metric space has bounded orbits. It is easy to show that a countable group is strongly bounded if and only if it is finite. However, there are plenty of examples of uncountable strongly bounded groups: $Sym(\mathbb N)$, $\omega_1$--existentially closed groups, and (unrestricted) infinite powers of finite perfect groups. For details on these we refer to \cite{Cor}.

\begin{ex}
It is clear that whenever $G$ is strongly bounded and $H\le G$, no action of $H$ with unbounded orbits extends to a $G$--action.
\end{ex}

We now turn to finitely generated groups. Recall that a finitely generated subgroup $H$ of a finitely generated group $G$ is \emph{undistorted} if the inclusion $H\to G$ induces a quasi-isometric embedding $(H, \d_Y)\to (G, \d_X)$, where $\d_X$ and $\d_Y$ are word metrics associated to some (equivalently, any) finite generating sets $X$ and $Y$ of $G$ and $H$, respectively. This is obviously equivalent to the requirement that there exists a constant $C$ such that $|h|_Y\le C|h|_X$ for all $h\in H$.

\begin{lem}\label{fgH}
Let $G$ be a group generated by a finite set $X$, $H\le G$, and let $\d$ be a left invariant metric on $H$. Suppose that the left action of $H$ on the metric space $(H, \d)$ extends to an action of $G$. Then there exists a constant $K$ such that $$ \d (1,h) \le K|h|_X$$ for all $h\in H$.
\end{lem}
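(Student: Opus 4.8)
The plan is to reduce Lemma \ref{fgH} to the elementary orbit-growth estimate of Lemma \ref{OML}. Suppose the left action of $H$ on $(H,\d)$ extends to an action $G\curvearrowright S$; by definition of extension there is a coarsely $H$--equivariant quasi-isometric embedding $f\colon (H,\d)\to S$, say with quasi-isometry constant $C$ (so \eqref{defqi} holds) and with $\sup_{h\in H}\d_S(f(hx),hf(x))<\infty$ for each $x$; in particular, applying coarse equivariance at the basepoint $x=1\in H$, there is a constant $D$ with $\d_S(f(h), h\cdot f(1))\le D$ for all $h\in H$.

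First I would estimate $\d_S(f(1), h\cdot f(1))$ for $h\in H$. Since $G$ is generated by the finite set $X$ and $G\curvearrowright S$, Lemma \ref{OML} applied to the point $s=f(1)\in S$ gives a constant $M$ with $\d_S(f(1), g\cdot f(1))\le M|g|_X$ for all $g\in G$; in particular $\d_S(f(1), h\cdot f(1))\le M|h|_X$ for $h\in H$. Next I would transfer this back to the metric $\d$ on $H$ using the quasi-isometric embedding and coarse equivariance:
\begin{equation*}
\frac1C\,\d(1,h)-C\le \d_S(f(1),f(h))\le \d_S(f(1), h\cdot f(1))+\d_S(h\cdot f(1), f(h))\le M|h|_X + D,
\end{equation*}
where the first inequality is the lower bound in \eqref{defqi} and the last step uses the coarse equivariance bound $D$ at $x=1$. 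Rearranging gives $\d(1,h)\le C(M|h|_X+D+C) = CM|h|_X + C(D+C)$.

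Finally, since $X$ generates $G$, every nontrivial $h\in H$ has $|h|_X\ge 1$, so $C(D+C)\le C(D+C)|h|_X$, and we obtain $\d(1,h)\le \big(CM + C(D+C)\big)|h|_X$ for all $h\in H\setminus\{1\}$; the inequality is trivial for $h=1$. Thus $K = CM + C(D+C)$ works. I do not anticipate a genuine obstacle here: the only points requiring care are making sure the "coarsely $H$--equivariant" hypothesis is invoked at a single fixed point (the basepoint $1$) so that the relevant supremum is a finite constant, and absorbing the additive constants into the multiplicative one using $|h|_X\ge 1$ for $h\ne 1$. The lemma is essentially the observation that Proposition \ref{undist-intr} (undistortion) already has a one-directional content that holds for an arbitrary left invariant metric on $H$, not just a word metric.
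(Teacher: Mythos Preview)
Your proof is correct and follows essentially the same approach as the paper: both combine the coarse $H$--equivariance of $f$ at the basepoint $1$, the lower bound of the quasi-isometric embedding, and Lemma \ref{OML} applied at $s=f(1)$. The only difference is cosmetic---you explicitly absorb the additive constant using $|h|_X\ge 1$ for $h\ne 1$, while the paper leaves this step implicit.
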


\begin{proof}
Let $G\curvearrowright S$ be an extension of $H\curvearrowright(H, \d)$ and let $f\colon H\to S$ be the corresponding coarsely $H$-equivariant quasi-isometric embedding. Since $f$ is a quasi-isometric embedding, there is a constant $C$ such that for every $h\in H$, we have
\begin{equation}\label{fg1}
\begin{array}{rcl}
\d (1, h) & \le & C \d_S (f(1), f(h)) + C \\ && \\ & \le & C (\d_S (f(1), hf(1)) +\d_S (hf(1), f(h)) +C\\ && \\ &\le & C \d_S (f(1), hf(1)) +CD +C,
\end{array}
\end{equation}
where $D=\sup_{h\in H} \d _S (hf(1), f(h))<\infty$ by coarse $H$-equivariance of $f$. Further by Lemma \ref{OML} applied to $s=f(1)$ we have
$$
\d_S (f(1), hf(1)) \le M |h|_X
$$
for some constant $M$ independent of $h$. Combining this with (\ref{fg1}) we obtain the required inequality.
\end{proof}

\begin{prop}\label{undist}
For any finitely generated group $G$ and any $H\le G$, the following hold.
\begin{enumerate}
\item[(a)] If the extension problem is solvable for $H\le G$, then $H$ is finitely generated.

\item[(b)] Suppose that $H$ is finitely generated. Then some (equivalently, any) geometric $H$--action on a geodesic metric space extends to a $G$--action if and only if $H$ is undistorted in $G$.
\end{enumerate}
\end{prop}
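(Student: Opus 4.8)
The plan is to prove the two parts in sequence, with part (b) containing the real content. For part (a), I would argue by contrapositive: suppose $H$ is not finitely generated and exhibit an action of $H$ that cannot be extended. Fix a finite generating set $X$ of $G$. Since $H$ is not finitely generated, I can choose a strictly increasing chain of finitely generated subgroups $H_1 < H_2 < \cdots$ with $\bigcup_n H_n = H$, and then build a left-invariant metric $\d$ on $H$ for which $\d(1, h)$ is forced to grow faster than any linear function of $|h|_X$. Concretely, one defines a weight function $w$ on $H$ that assigns weight roughly $n \cdot |h|_X$-beating values to elements first appearing in $H_n \setminus H_{n-1}$ — for instance pick for each $n$ an element $h_n \in H_n \setminus H_{n-1}$, note $|h_n|_X \to \infty$ is not guaranteed, so instead set $w(h) = \max\{ n \mid h \notin H_{n-1}\} \cdot (|h|_X + 1)$ on a symmetric generating set and take the associated metric $\d_w \in \mathcal{M}(H)$. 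The point is that $\d_w(1, h)/|h|_X$ is unbounded on $H$, because elements of $H_n\setminus H_{n-1}$ with bounded $X$-length exist for arbitrarily large $n$ (any ball in $(G,\d_X)$ is finite, but infinitely many of them lie outside $H_{n-1}$ as $n$ grows since $H$ is infinite and not the union of finitely many $H_n$). Then Lemma \ref{fgH} shows the translation action $H \curvearrowright (H,\d_w)$ does not extend to $G$, contradicting solvability. I should be a little careful choosing $w$ so that $\d_w$ is genuinely a metric (finite-valued, which holds since $\mathrm{supp}(w) = H$) and so that the ratio is provably unbounded; this bookkeeping is the only delicate point of part (a).

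For part (b), the ``only if'' direction is the easy half: if a geometric $H$-action on a geodesic space $R$ extends to $G$, then by the Milnor–Švarc lemma $R$ is $H$-equivariantly quasi-isometric to $(H, \d_Y)$ for a finite generating set $Y$ of $H$ (here I first note $H$ is finitely generated by hypothesis, or by part (a)), so applying Lemma \ref{fgH} to $\d = \d_Y$ gives a constant $K$ with $|h|_Y = \d_Y(1,h) \le K|h|_X$ for all $h \in H$, which is exactly undistortedness. For the ``if'' direction, suppose $H$ is undistorted, fix finite generating sets $X \supseteq Y$ (we may enlarge $X$ to contain a finite generating set $Y$ of $H$ without changing the quasi-isometry type), and let $H \curvearrowright R$ be geometric with $R$ geodesic. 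By Milnor–Švarc, $R$ is $H$-equivariantly quasi-isometric to $(H, \d_Y)$, so it suffices to extend the translation action $H \curvearrowright (H, \d_Y)$. The natural candidate for the extended space is $(G, \d_{X})$ with $G$ acting by left translation — or, in the language of the paper, the induced metric $\d_{C,X}$ with $C = \{\d_Y\}$, which by Example \ref{indGS} is just $\d_{X\cup Y} = \d_X$ (up to the fixed finite generating set). The inclusion map $(H, \d_Y) \hookrightarrow (G, \d_X)$ is $H$-equivariant by left-invariance, and it is a quasi-isometric embedding precisely because $H$ is undistorted: one direction of the quasi-isometry inequality is $\d_X(1,h) \le \d_Y(1,h)$ (free, since $Y \subseteq X$), and the other is $\d_Y(1,h) \le K \d_X(1,h)$ (the undistortedness constant). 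Thus $G \curvearrowright (G, \d_X)$ is an extension of $H \curvearrowright (H,\d_Y)$, hence of the original geometric action.

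The main obstacle I anticipate is in part (a): constructing a single left-invariant metric on a non-finitely-generated $H$ whose distortion relative to $(G, \d_X)$ is provably superlinear. The subtlety is that I do not get to choose which elements of $H$ are ``short'' in $G$ — that is dictated by the ambient word metric $\d_X$ — so I must argue that no matter how the $X$-short elements of $H$ are distributed among an exhausting chain $\{H_n\}$, I can still assign $H$-weights making the ratio blow up. The clean way is: since $H$ is infinite and not finitely generated, for every $n$ the subgroup $H_n$ has infinite index in $H$ or at least $H \ne H_n$, so there exist elements of $H \setminus H_n$ of $X$-length at most some $r_n$ (just take any such element); assign it $H$-weight $\ge (n+1) r_n$. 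Letting the metric be generated by these assignments and checking finiteness and symmetry closes the argument via Lemma \ref{fgH}. Everything else — Milnor–Švarc, the equivariance of left translation, the ``only if'' of (b) — is routine once this is in place.
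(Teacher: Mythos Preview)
Your proposal is correct and matches the paper's approach in both parts: part (b) is essentially identical (Milnor--\v{S}varc plus Lemma \ref{fgH} for ``only if'', and the inclusion $(H,\d_Y)\hookrightarrow(G,\d_X)$ for ``if''), and for part (a) the paper likewise builds a left-invariant metric on $H$ with $\d_w(1,h)/|h|_X$ unbounded and invokes Lemma \ref{fgH}. The paper's construction for (a) is a slight streamlining of your ``clean way'': it directly picks a generating set $Y=\{y_i\}$ of $H$ with $y_{i+1}\notin\langle y_1,\ldots,y_i\rangle$, assigns weight $M_i$ to $y_i$ (and $\infty$ elsewhere) with $M_i/|y_i|_X\to\infty$, so that the lower bound $\d_w(1,y_i)\ge M_i$ is immediate from the chain condition---this sidesteps the bookkeeping you flagged as the delicate point.
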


\begin{proof}
(a) Let $X$ denote a finite generating set of $G$ and let $\d_X$ be the corresponding metric on $G$. Arguing by contradiction, assume that $H$ is not finitely generated. Then we can find an infinite generating set $Y=\{ y_i\mid i\in \mathbb N\}$ of $H$ with the property that
\begin{equation}\label{xi+1}
y_{i+1}\notin \langle y_1, \ldots, y_i\rangle
\end{equation}
for all $i\in \mathbb N$.
We choose any increasing sequence $(M_i)\subseteq \mathbb N$ such that
\begin{equation}\label{LiMi}
\lim_{i\to \infty} M_i/|y_i|_X=\infty
\end{equation}
and define a weight function on $G$ by the rule
$$
w(g)= \left\{ \begin{array}{ll}
M_i, & {\rm if}\; g=y_i \; {\rm for \; some\;} i\in \mathbb N,\\
\infty, & {\rm otherwise}.\end{array}\right.
$$
Let $\d _w$ be the associated metric on $H$. By our assumption, the left action of $H$ on itself endowed with the metric $\d_w$ extends to an action of $G$. Therefore, by Lemma \ref{fgH} we have  $\d_w (1, h) \le K |h|_X$ for some constant $K$ independent of $h$. In particular, we have $\d_w (1, y_i)\le K|y_i|_X=o(M_i)$ as $i\to \infty$ by (\ref{LiMi}). On the other hand, we have $\d_w(1, y_i)\ge M_i$ for all $i$; indeed every decomposition of $y_i$ into a product of elements of $Y$ must contain an element of weight at least $M_i$ by (\ref{xi+1}). A contradiction.

(b) Let $X$ (respectively, $Z$) be a finite generating set of $G$ (respectively, $H$).  If $H$ is undistorted in $G$, it is straightforward to verify that  $G\curvearrowright (G, \d_X)$ is an extension of $H\curvearrowright (H, \d_Z)$. By the Svarc-Milnor Lemma (see Proposition 8.19 in Chapter I.8 of \cite{BH}), every geometric $H$-action $H\curvearrowright R$ on a geodesic space $R$ is equivalent to $H\curvearrowright (H, \d_Z)$; therefore, $G\curvearrowright (G, \d_X)$ is an extension of $H\curvearrowright R$ as well. Conversely, suppose that $H\curvearrowright (H,\d_Z)$ extends to a $G$--action. By Lemma \ref{fgH}, there exists a constant $K$ such that $|h|_Z\le K|h|_X$ for all $h\in H$. Thus $H$ is undistorted in $G$.
\end{proof}

\begin{ex}
Let $$G=\langle a,b \mid b^{-1}ab=a^2\rangle .$$ Then the extension problem for $H=\langle a \rangle $ and $G$ is unsolvable. Moreover, the natural action of $H\cong \mathbb Z$ on $\mathbb R$ does not extend to an action of $G$. Indeed it is well-known and easy to see that $H$ is (exponentially) distorted in $G$.
\end{ex}

Recall that solvability of the extension problem does not pass to finite index subgroups (see Example \ref{vfree}). However we have the following result, which will be used in the proof of Corollary \ref{tfhyp}.

\begin{lem}\label{finind}
Let $G$ be a group, $H$ a subgroup of $G$, $K$ a finite index subgroup of $H$. Suppose that the extension problem is solvable for $K\le G$. Then it is solvable for $H\le G$.
\end{lem}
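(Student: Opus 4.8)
The plan is to take an arbitrary action $H \curvearrowright R$ of $H$ on a metric space $R$ and produce an extension to a $G$-action. First I would restrict the action to $K$, obtaining $K \curvearrowright R$, and use the hypothesis that the extension problem is solvable for $K \le G$ to get a $G$-action $G \curvearrowright S$ together with a coarsely $K$-equivariant quasi-isometric embedding $f\colon R \to S$. The issue is that $f$ need only be equivariant with respect to $K$, not all of $H$, so $G \curvearrowright S$ is not yet visibly an extension of $H \curvearrowright R$. The key idea is that since $[H:K] < \infty$, the ``defect'' of $f$ along the finitely many coset representatives of $K$ in $H$ is still uniformly bounded, provided $H$ moves points of $R$ at bounded speed per generator — but $R$ need not be a geodesic or finitely generated space, so I would instead modify $R$ rather than argue directly.

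The cleaner route, which I expect to be the actual argument: replace $R$ by the space $R' = H \times_K R$, the ``induced action space'' in the naive sense. Concretely, fix a finite transversal $T = \{t_1, \dots, t_m\}$ for $K$ in $H$ with $t_1 = 1$, let $R' = T \times R$ as a set, and put on it the action of $H$ defined by having $H$ permute/act on the $m$ copies of $R$ through the induced permutation action on $H/K$ combined with the $K$-action on each fiber (this is the standard construction of an induced representation, done metrically). Equip $R'$ with a metric making each fiber $\{t_i\} \times R$ isometric to $R$ and the fibers at bounded (say, distance $1$ or $\diam$-controlled) distance from one another when $R$ has finite diameter, or more carefully, with the metric $\d_{R'}((t_i,x),(t_j,y)) = \d_R(x,y) + [i \ne j]$ suitably symmetrized; one checks this is $H$-invariant because $H$ acts on the fiber index set by permutations and acts isometrically within fibers after correcting by the transversal cocycle. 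The natural inclusion $R = \{1\} \times R \hookrightarrow R'$ is an isometric, coarsely $K$-equivariant embedding — in fact $H$-equivariant is too strong, but the point is that $R \hookrightarrow R'$ is a quasi-isometry and $R'$ carries a genuine $H$-action restricting appropriately. Hence it suffices to extend $H \curvearrowright R'$.

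Now the main step: since the $K$-action on $R'$ (obtained by restriction) extends to a $G$-action $G \curvearrowright S$ with coarsely $K$-equivariant quasi-isometric embedding $g\colon R' \to S$, and since $R' \to R'$, $x \mapsto hx$ for $h \in H$ permutes the $m$ fibers, one shows $g$ is automatically coarsely $H$-equivariant: for $h \in H$ and $x \in R'$, write $h = k t_i$-type decompositions on fibers and bound $\d_S(g(hx), h g(x))$ using coarse $K$-equivariance on each fiber together with the fact that $S$ has an $H$-action (via $G$) so $\sup_{i} \d_S(g(t_i \cdot r), \text{something})$ ranges over a finite set $T$ and is therefore finite. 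The bound is a finite maximum over $T$ of quantities each finite by coarse $K$-equivariance, hence finite. Therefore $G \curvearrowright S$ is an extension of $H \curvearrowright R'$, and composing embeddings $R \hookrightarrow R' \to S$ gives a coarsely $H$-equivariant quasi-isometric embedding $R \to S$, as required.

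The step I expect to be the main obstacle is verifying that the composition $g \circ (R \hookrightarrow R')$ is coarsely $H$-equivariant rather than merely coarsely $K$-equivariant — i.e., controlling $\sup_{h \in H}\d_S(g(h\iota(x)), h g(\iota(x)))$. This requires carefully exploiting that $H$ permutes the finitely many fibers of $R'$ and that the $G$-action (restricted to $H$) on $S$ already ``knows'' how to move the $g$-images of the fibers, so that the discrepancy splits as a $K$-equivariance defect (bounded by hypothesis) plus a bounded ``transversal correction'' coming from the finitely many $t_i$. Making this bookkeeping precise — in particular choosing the metric on $R'$ so that the fibers are uniformly close and the $H$-action is genuinely isometric — is the technical heart of the proof; everything else is routine.
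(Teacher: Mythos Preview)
Your first paragraph already contains the correct idea, and you abandon it for the wrong reason. The paper's proof is exactly the direct argument you sketch and then discard: restrict $H\curvearrowright R$ to $K$, extend to $G\curvearrowright S$ via a coarsely $K$--equivariant quasi-isometric embedding $f\colon R\to S$, and show that this \emph{same} $f$ is already coarsely $H$--equivariant. No auxiliary space is needed.

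The point you miss is that coarse $H$--equivariance only requires $\sup_{h\in H}\d_S(f(hx),hf(x))<\infty$ for each fixed $x$; there is no need to control how $H$ moves points of $R$. Fix $x\in R$, a transversal $T$ with $H=KT$, and write $h=kt$. Since $G$ (hence $H$) already acts isometrically on $S$, one has
\[
\d_S(f(ktx),ktf(x))\le \d_S(f(ktx),kf(tx))+\d_S(kf(tx),ktf(x))=\d_S(f(k\cdot tx),kf(tx))+\d_S(f(tx),tf(x)).
\]
The first term is bounded over $k\in K$ by coarse $K$--equivariance applied at the point $tx$; the second is one of the finitely many numbers $\d_S(f(tx),tf(x))$, $t\in T$. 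That is the whole proof.

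Your detour through $R'=T\times R$ is both unnecessary and problematic as written. The metric you propose, $\d_{R'}((t_i,x),(t_j,y))=\d_R(x,y)+[i\ne j]$, is not $H$--invariant: if $h\cdot(t_i,x)=(t_{\sigma(i)},k_ix)$ with $ht_i=t_{\sigma(i)}k_i$, then $\d_{R'}(h(t_i,x),h(t_j,y))=\d_R(k_ix,k_jy)+[i\ne j]$, and in general $k_i\ne k_j$, so this does not equal $\d_R(x,y)+[i\ne j]$. More to the point, even if you repaired the metric, your step~4 (upgrading coarse $K$--equivariance of $g$ to coarse $H$--equivariance using the finite transversal and the $H$--action on $S$) is precisely the computation above, so the construction of $R'$ buys nothing.
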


\begin{proof}
Let $H\curvearrowright R=(H, R, \alpha)$ be an action of $H$ on a metric space $R$ and let $K\curvearrowright R= (K, R, \alpha\vert_K)$. Let $G\curvearrowright S$ denote an extension of $K \curvearrowright R$ and let $f\colon R\to S$ be a coarsely $K$--equivariant quasi-isometric embedding. Fix some $r\in R$.  Let $T$ be a finite subset of $H$ such that $H=KT$. Further let $$M=\max_{t\in T} \d_S(f(tr), tf (r))$$ and $$C=\max_{t\in T}\sup_{k\in K} \d_S (f(ktr), kf(tr)).$$ Since $f$ is coarsely $K$--equivariant, $C$ is finite. For every $h\in H$, we have $h=kt$ for some $t\in T$ and $k\in K$. Therefore,
$$
\d_S (f(hr), hf(r))\le \d_S (f(ktr), kf(tr))+ \d_S (kf(tr), ktf(r)) \le C +\d_S(f(tr), tf(r)) \le C+M.
$$
Thus $f $ is also coarsely $H$--equivariant and hence $G\curvearrowright S$  is an extension of  $H\curvearrowright R$.
\end{proof}

We conclude this section with an elementary observation which allows us to reduce the extension problem to the particular case of group actions on geodesic metric space.

\begin{prop}\label{geo}
Let $G$ be a group acting on a metric space $R$. Then there exists an extension $G\curvearrowright S$ of $G\curvearrowright R$ such that $S$ is geodesic.
\end{prop}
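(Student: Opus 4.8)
The plan is to replace the metric space $R$ by a geodesic space built from $R$ in a way that remembers the original distances up to a bounded error and carries the $G$-action. The standard device is to attach to $R$ a collection of unit intervals (or intervals of suitable length) realizing distances: concretely, one takes the metric graph $\Sigma$ whose vertex set is $R$ and in which every pair $x,y\in R$ is joined by an edge of length $\d_R(x,y)$ (or, to keep lengths comparable to a fixed scale, by a path of $\lceil \d_R(x,y)\rceil$ unit edges). Declaring $\Sigma$ to be the associated length space, $\Sigma$ is geodesic by construction. I would include $R$ into $\Sigma$ as the vertex set; the induced length metric $\d_\Sigma$ on $R$ satisfies $\d_\Sigma(x,y)\le \d_R(x,y)$ on one side and, by the triangle inequality applied to any edge-path witnessing $\d_\Sigma$, one checks $\d_\Sigma(x,y)\ge \d_R(x,y)$ as well (each edge of the path from $x$ to $y$ has length exactly the $R$-distance of its endpoints, so concatenating and applying the triangle inequality in $R$ gives the lower bound). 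Hence $R\hookrightarrow \Sigma$ is an isometric embedding, in particular a $(1,0)$-quasi-isometric embedding, and trivially coarsely $H$-equivariant for any subgroup — indeed it will be $G$-equivariant.

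The key point is that the $G$-action on $R$ extends to an isometric $G$-action on $\Sigma$. This is immediate from functoriality of the construction: an isometry $g\colon R\to R$ sends the edge joining $x$ to $y$ (of length $\d_R(x,y)$) to the edge joining $gx$ to $gy$ (of length $\d_R(gx,gy)=\d_R(x,y)$), so $g$ induces a length-preserving graph automorphism of $\Sigma$, hence an isometry of the length space $\Sigma$; and this assignment is a homomorphism $G\to \mathrm{Isom}(\Sigma)$ since it is so on the vertex set and the extension to edges is canonical. The inclusion $R\hookrightarrow\Sigma$ then intertwines the two actions on the nose, so it is a coarsely $G$-equivariant (indeed equivariant) quasi-isometric embedding, and therefore $G\curvearrowright\Sigma$ is an extension of $G\curvearrowright R$ in the sense of Definition \ref{defext}.

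The only genuine point requiring care is the cardinality bookkeeping: the paper fixes a cardinal $c\ge\mathfrak c$ and insists all metric spaces have cardinality at most $c$. The space $\Sigma$ has cardinality at most $|R|+|R|^2\cdot\aleph_0\le c$ (since $c\ge\mathfrak c$ is infinite and $|R|\le c$), so $\Sigma$ is still an admissible space; I would remark on this explicitly. A second minor subtlety is whether $\d_R$ is allowed to take the value $\infty$: the excerpt works with extended metrics, but Proposition \ref{geo} is stated for an ordinary metric space $R$, so no issue arises; if one wanted the extended-metric version one would simply allow edge "lengths" $+\infty$, i.e. omit the edge, obtaining a possibly disconnected geodesic graph, and the same argument goes through componentwise. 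I do not expect any real obstacle here — the construction is soft and the verification is a routine check of the length-metric formula and of functoriality; the main thing to be careful about is presenting the lower bound $\d_\Sigma(x,y)\ge\d_R(x,y)$ cleanly.
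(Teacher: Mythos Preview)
Your proposal is correct and is essentially identical to the paper's proof: the paper also takes $S$ to be the complete graph on vertex set $R$ with each edge $\{x,y\}$ given length $\d_R(x,y)$, notes that the $G$-action extends and that the identity map $R\to V(S)$ is a $G$-equivariant isometric embedding. Your write-up is simply more detailed (the explicit lower bound argument, the cardinality remark, and the extended-metric aside are all absent from the paper's three-line proof), but the construction and the logic match exactly.
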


\begin{proof}
Let $S$ be the complete graph with the vertex set $V(S)=R$. For every $x,y\in R$ such that $x\ne y$, we identify the edge connecting $x$ and $y$ with a segment of length $\d_R (x,y)$. This induces a metric on $S$ in the obvious way. The action of $G$ on $V(S)=R$ extends (in the usual sense) to an isometric action on $S$ and it is straightforward to check that the identity map $R\to R=V(S)$ induces a $G$-equivariant isometric embedding $R\to S$.
\end{proof}

\subsection{Equivalence of group actions}

We begin by showing that the relation $\sim$ introduced in Definition \ref{eqdef} is indeed an equivalence relation. This is fairly elementary and straightforward to prove. Nevertheless, we decided to provide a complete proof since this concept is central to our paper.

\begin{defn}\label{defn:cinv}
Let $X$ and $Y$ be two metric spaces and let $\alpha\colon X\to Y$ be a map. We say that a map $\beta \colon Y\to X$ is a (right) \emph{coarse inverse} of $\alpha$ if
\begin{equation}\label{betay}
\d_Y(\alpha \circ\beta (y), y)\le \e
\end{equation}
for all $y\in Y$.
\end{defn}

\begin{lem}\label{lem:cinv}
Let $G\curvearrowright X$ and $G\curvearrowright Y$ be two actions of a group $G$ on metric spaces. Let $\alpha \colon  X\to Y$ be a coarsely $G$--equivariant quasi-isometry. Then there exists a coarse inverse $\beta \colon Y\to X$ of $\alpha$ and every coarse inverse of $\alpha $ is a coarsely $G$--equivariant quasi-isometry.
\end{lem}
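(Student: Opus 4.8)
The plan is to prove the two assertions in turn: first existence of a coarse inverse, then the claim that every coarse inverse is automatically a coarsely $G$--equivariant quasi-isometry. For existence, since $\alpha\colon X\to Y$ is a quasi-isometry, there is a constant $C$ so that $\alpha$ is a $(C,C)$--quasi-isometric embedding and $Y$ lies in the $\e_0$--neighborhood of $\alpha(X)$ for some $\e_0$. For each $y\in Y$ choose (using the axiom of choice) a point $x_y\in X$ with $\d_Y(\alpha(x_y),y)\le\e_0$, and set $\beta(y)=x_y$. Then $\d_Y(\alpha\circ\beta(y),y)\le\e_0$ for all $y$, so $\beta$ is a coarse inverse. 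This part is essentially immediate from the definition of quasi-isometry.

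For the second assertion, let $\beta\colon Y\to X$ be any map satisfying $\d_Y(\alpha\circ\beta(y),y)\le\e$ for all $y$. First I would check that $\beta$ is a quasi-isometric embedding: given $y,y'\in Y$, apply the lower and upper quasi-isometry inequalities for $\alpha$ to the pair $\beta(y),\beta(y')\in X$, and then replace $\d_Y(\alpha\beta(y),\alpha\beta(y'))$ by $\d_Y(y,y')$ at the cost of an additive error $2\e$ via the triangle inequality. This yields constants (depending only on $C$ and $\e$) making $\beta$ a quasi-isometric embedding. Coarse density of $\beta(Y)$ in $X$ follows because $\alpha$ is a quasi-isometric embedding: for $x\in X$, the point $\beta(\alpha(x))$ satisfies $\d_Y(\alpha\beta\alpha(x),\alpha(x))\le\e$, hence $\d_X(\beta\alpha(x),x)\le C\e+C^2$ by the lower quasi-isometry bound for $\alpha$; so every point of $X$ is within a uniform distance of $\beta(Y)$, and $\beta$ is a quasi-isometry.

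It remains to verify coarse $G$--equivariance of $\beta$. Fix $y\in Y$ and $g\in G$; I want to bound $\d_X(\beta(gy),g\beta(y))$ uniformly. The idea is to push everything through $\alpha$, where the comparison becomes cheap, and pull back using that $\alpha$ is a quasi-isometric embedding. Concretely, estimate $\d_Y(\alpha(\beta(gy)),\alpha(g\beta(y)))$: the first term is $\e$--close to $gy$; the second term, since $\alpha$ is coarsely $G$--equivariant with constant $D=\sup_{h\in G}\d_Y(\alpha(hx),h\alpha(x))$ (finite for each $x$, but here applied at the single point $\beta(y)$), is within $D$ of $g\alpha(\beta(y))$, which in turn is $g$ applied to something $\e$--close to $y$, hence (as $g$ acts by isometries) within $\e$ of $gy$. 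So $\d_Y(\alpha\beta(gy),\alpha g\beta(y))\le 2\e+D$, and the lower quasi-isometry bound for $\alpha$ converts this into $\d_X(\beta(gy),g\beta(y))\le C(2\e+D)+C^2$, a bound independent of $g$. Taking the supremum over $g\in G$ finishes the proof. The only mildly delicate point — the "main obstacle," such as it is — is bookkeeping the constant $D$: coarse $G$--equivariance of $\alpha$ gives a finite supremum over $G$ for each fixed basepoint, and one must be careful to apply it always at the same basepoint $\beta(y)$ (with $y$ fixed) rather than at a varying point, so that $D$ genuinely does not depend on $g$; everything else is routine triangle-inequality manipulation.
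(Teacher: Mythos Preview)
Your proposal is correct and follows essentially the same approach as the paper's proof: both establish existence of a coarse inverse by choosing preimages in the $\e$-neighborhood, and then verify quasi-isometry and coarse $G$-equivariance via the same triangle-inequality estimates pushed through $\alpha$. The only cosmetic difference is the order of the checks (the paper does coarse $G$-equivariance first, then the quasi-isometry bounds, then coarse density), and your observation about fixing the basepoint $\beta(y)$ when invoking coarse equivariance of $\alpha$ matches exactly what the paper does with its constant $D$.
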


\begin{proof}
Since $\alpha $ is a quasi-isometry, $Y$ coincides with the $\e$--neighborhood of $\alpha (X)$ for some constant $\e$. Therefore, to every $y\in Y$ we can associate a point $\beta (y)\in X$ such that (\ref{betay}) holds. Thus we get a map $\beta \colon Y\to X$, which is a coarse inverse of $\alpha$.

Let now $C$ denote the quasi-isometry constant of $\alpha$ (as defined in the Introduction) and let $\beta \colon Y\to X$ be any map satisfying (\ref{betay}).  Given $y\in Y$, let $$ D=\sup_{g\in G} \d_Y(\alpha(g\beta(y)), g\alpha\circ\beta(y)).$$ Then for every $g\in G$, we have
$$
\begin{array}{rcl}
\d_X(\beta(gy), g\beta(y))& \le & C(\d _Y(\alpha\circ \beta(gy),\alpha(g\beta(y))) +C)\le \\&&\\&& C(\d _Y(\alpha\circ \beta(gy),g\alpha\circ\beta(y)) +C+D)\le \\&&\\&&
C(\d _Y(\alpha\circ \beta(gy),gy) +\d_Y(gy,g(\alpha\circ \beta(y))) +C+D)\le  C(2\e +C+D).
\end{array}
$$
Thus $\beta $ is coarsely $G$--equivariant.

Further,  for every $y_1, y_2\in Y$, we have
$$
\d_X(\beta (y_1), \beta(y_2)) \le C\left(\d_Y(\alpha \circ \beta (y_1), \alpha \circ\beta (y_2))+C\right)\le C\left( \d_Y(y_1, y_2)+ 2\e+C\right).
$$
Similarly, we obtain
$$
\d_X(\beta (y_1), \beta(y_2)) \ge \frac1C\left(\d_Y(\alpha \circ \beta (y_1), \alpha \circ\beta (y_2))-C\right)\ge \frac1C\left( \d_Y(y_1, y_2)- 2\e-C\right).
$$
Finally, we note that $X$ belongs to the closed $C(\e +C)$--neighborhood of $\beta (Y)$. Indeed for every $x\in X$, we have
$$
\d_X(x, \beta\circ \alpha (x))\le C\left( \d_Y(\alpha (x), \alpha \circ \beta \circ \alpha (x))+C\right)\le C(\e +C)
$$
by (\ref{betay}). Thus $\beta $ is a quasi-isometry.
\end{proof}

\begin{prop}\label{equiv}
The relation $\sim$ introduced in Definition \ref{eqdef} is an equivalence relation.
\end{prop}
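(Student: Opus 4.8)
The plan is to verify the three defining properties of an equivalence relation for $\sim$ directly, using the characterization of $\sim$ via coarsely $G$--equivariant quasi-isometries and the auxiliary facts just established.

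\medskip

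\noindent\textbf{Reflexivity.} For any action $G\curvearrowright R$, the identity map $\mathrm{id}_R\colon R\to R$ is trivially an isometry (hence a quasi-isometry with constant $C=1$), it is onto (so $R$ is in the $0$--neighborhood of its image), and it is $G$--equivariant in the literal sense, so $\sup_{g\in G}\d_R(\mathrm{id}(gx),g\,\mathrm{id}(x))=0<\infty$. Thus $G\curvearrowright R\sim G\curvearrowright R$.

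\medskip

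\noindent\textbf{Symmetry.} Suppose $G\curvearrowright R\sim G\curvearrowright S$, witnessed by a coarsely $G$--equivariant quasi-isometry $\alpha\colon R\to S$. By Lemma~\ref{lem:cinv}, $\alpha$ admits a coarse inverse $\beta\colon S\to R$, and every coarse inverse of $\alpha$ is itself a coarsely $G$--equivariant quasi-isometry. Hence $\beta$ witnesses $G\curvearrowright S\sim G\curvearrowright R$.

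\medskip

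\noindent\textbf{Transitivity.} Suppose $G\curvearrowright R\sim G\curvearrowright S$ and $G\curvearrowright S\sim G\curvearrowright T$, witnessed by coarsely $G$--equivariant quasi-isometries $\alpha\colon R\to S$ and $\gamma\colon S\to T$. I claim $\gamma\circ\alpha\colon R\to T$ is again a coarsely $G$--equivariant quasi-isometry. That $\gamma\circ\alpha$ is a quasi-isometry is the standard composition fact: composing the two quasi-isometric-embedding inequalities with constants $C_\alpha,C_\gamma$ yields a quasi-isometric embedding with a constant depending only on $C_\alpha,C_\gamma$, and coboundedness of $\gamma\circ\alpha$ follows since $T$ lies in a bounded neighborhood of $\gamma(S)$ and $S$ lies in a bounded neighborhood of $\alpha(R)$, so $T$ lies in a bounded neighborhood of $\gamma(\alpha(R))$ (here one uses the Lipschitz-type upper bound in the quasi-isometry inequality for $\gamma$ to push the neighborhood of $\alpha(R)$ in $S$ forward to a neighborhood of $\gamma\alpha(R)$ in $T$). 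For coarse $G$--equivariance, fix $x\in R$ and estimate, for $g\in G$,
\[
\d_T\big((\gamma\circ\alpha)(gx),\,g(\gamma\circ\alpha)(x)\big)\le \d_T\big(\gamma(\alpha(gx)),\,\gamma(g\alpha(x))\big)+\d_T\big(\gamma(g\alpha(x)),\,g\gamma(\alpha(x))\big).
\]
The second term is bounded uniformly in $g$ by coarse $G$--equivariance of $\gamma$. For the first term, apply the upper quasi-isometry bound for $\gamma$ to get $\d_T(\gamma(\alpha(gx)),\gamma(g\alpha(x)))\le C_\gamma\,\d_S(\alpha(gx),g\alpha(x))+C_\gamma$, which is bounded uniformly in $g$ by coarse $G$--equivariance of $\alpha$. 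Adding these, $\sup_{g\in G}\d_T((\gamma\circ\alpha)(gx),g(\gamma\circ\alpha)(x))<\infty$, so $G\curvearrowright R\sim G\curvearrowright T$.

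\medskip

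\noindent None of these steps is a genuine obstacle; the only point requiring a little care is the transitivity argument, where one must remember to convert the ``distance distortion under $\alpha$'' into ``distance distortion under $\gamma\circ\alpha$'' using the Lipschitz-type upper bound in the quasi-isometry inequality for $\gamma$, rather than naively adding the two equivariance defects. Lemma~\ref{lem:cinv} does essentially all the work for symmetry, so the proof is short.
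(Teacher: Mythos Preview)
Your proof is correct and follows the same approach as the paper: symmetry via Lemma~\ref{lem:cinv}, with reflexivity and transitivity handled directly. The paper simply declares reflexivity and transitivity ``obvious'' without writing out the details you give, so your version is a more explicit rendition of the same argument.
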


\begin{proof}
Reflexivity and transitivity are obvious. That $\sim $ is symmetric follows from Lemma \ref{lem:cinv}.
\end{proof}

Recall that given a connected graph $\Gamma$, we can think of it as a metric space with respect to the standard combinatorial metric (that is, the metric obtained by identifying every edge of $\Gamma $ with $[0,1]$). The fact stated below will be used in the next section.

\begin{prop}\label{graph}
For any group $G$, any action of $G$ on a geodesic metric space is equivalent to an action of $G$ on a graph (endowed with the combinatorial metric) with trivial vertex stabilizers.
\end{prop}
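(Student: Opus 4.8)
The plan is to start with a geodesic metric space $R$ on which $G$ acts and replace it, in two steps, first by a connected graph and then by one with trivial vertex stabilizers, checking at each step that the resulting action is equivalent to the original. For the first step, fix a basepoint $r_0\in R$ and let $N=\sup_{g\in G}\d_R(r_0,gr_0)$ if the orbit is bounded, in which case the action is equivalent to the trivial action on a point (which is a graph), so assume the orbit is unbounded. In general I would pick a maximal $1$-separated subset $V\subseteq R$ that contains the orbit $Gr_0$ and is $G$-invariant (one can build this by a transfinite/Zorn argument among $G$-invariant $1$-separated subsets containing $Gr_0$; maximality gives that $V$ is $1$-dense in $R$). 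Since $R$ is geodesic, the graph $\Gamma$ with vertex set $V$ and an edge between $u,v\in V$ whenever $\d_R(u,v)\le 3$ (say) is connected: a geodesic in $R$ between two points of $V$ can be tracked by a sequence of vertices of $V$ at bounded steps. The action of $G$ on $R$ restricts to an action on $V$, hence on $\Gamma$, and the inclusion $V\hookrightarrow R$ is a $G$-equivariant quasi-isometry (the lower bound on distances uses that $V$ is $1$-separated and that $R$ is geodesic so word-distance in $\Gamma$ dominates $\d_R$ up to multiplicative/additive constants; the upper bound uses connectivity; $1$-density gives coboundedness). So $G\curvearrowright R\sim G\curvearrowright\Gamma$.

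For the second step, I need to kill vertex stabilizers without changing the quasi-isometry type or equivariance. The standard trick: replace each vertex by a ``cloud'' on which the stabilizer acts freely. Concretely, build a new graph $\Gamma'$ whose vertex set is $G\times_{?}\dots$ — more cleanly, take $V(\Gamma')=G$ together with one $G$-orbit of added vertices for each $G$-orbit of vertices of $\Gamma$, chosen so that $G$ acts freely on $V(\Gamma')$; then connect a chosen free-orbit vertex $\hat v$ over $v\in V(\Gamma)$ to all of $\mathrm{Stab}_G(v)\cdot\hat v$ by edges of length one, and connect $\hat v$ to $\hat w$ by an edge whenever $v,w$ are adjacent in $\Gamma$, all $G$-equivariantly. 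Since each stabilizer has bounded orbit in this new ``cloud'' (the cloud over $v$ is a single $\mathrm{Stab}_G(v)$-orbit, diameter $\le 2$), the quotient-collapsing map $\Gamma'\to\Gamma$ sending each cloud to the corresponding vertex is a $G$-equivariant quasi-isometry (it is coarsely onto by construction and distorts distances by at most an additive constant). Hence $G\curvearrowright\Gamma'\sim G\curvearrowright\Gamma\sim G\curvearrowright R$, and $G$ acts on $\Gamma'$ with trivial vertex stabilizers, as required.

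I expect the main obstacle to be purely bookkeeping rather than conceptual: making the construction in the second step genuinely $G$-equivariant and simultaneously free on vertices, while keeping the cloud over each vertex of bounded diameter so that the collapsing map is a quasi-isometry. One has to be careful that, although $G$ may act freely on the new vertex set, the old vertex stabilizers still only move points a bounded amount inside their cloud — this is what forces the added edges within each cloud to have uniformly bounded ``length'' (diameter), and it is the only place where one must check a uniform bound rather than an orbit-by-orbit bound. Verifying the two sets of quasi-isometry inequalities (and coarse equivariance, which here is actual equivariance) in each step is routine given Lemma \ref{lem:cinv} and the Švarc–Milnor-type estimates already used in Proposition \ref{undist}, so I would state them and leave the constants to the reader.
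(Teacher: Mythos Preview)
Your two-step plan can be made to work, but Step~1 as written has a genuine gap. You claim a maximal $G$-invariant $1$-separated subset $V\subseteq R$ exists and is $1$-dense. Neither is guaranteed. If some $G$-orbit fails to be $1$-separated (say $G=\mathbb Z$ acting on $\mathbb R$ by $n\cdot x=x+\tfrac{n}{2}$), then \emph{no} nonempty $G$-invariant subset is $1$-separated, so the Zorn argument yields only $V=\emptyset$; in particular there is no $V$ containing $Gr_0$. More generally, maximality among $G$-invariant $1$-separated sets does not force $1$-density: it only says you cannot adjoin a further $G$-orbit and remain $1$-separated, which is vacuous when every orbit is already ``thick''. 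The fix is easy --- drop the separation requirement entirely and take $V=R$, or any $G$-invariant coarsely dense subset --- but then your Step~1 collapses to something trivial. Your Step~2 is essentially right in spirit, though the bookkeeping you flag (equivariance, freeness, uniform cloud diameter) is nontrivial to write out cleanly with orbit representatives.

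The paper avoids all of this by doing both steps at once: take the graph $\Gamma$ with vertex set $G\times S$, with an edge between $(g_1,s_1)$ and $(g_2,s_2)$ whenever $\d_S(s_1,s_2)\le 1$, and let $G$ act diagonally (left multiplication on the first factor, the given action on the second). The action on vertices is free because $G$ acts freely on itself, and $s\mapsto(1,s)$ is a coarsely $G$-equivariant quasi-isometry $S\to\Gamma$. This is exactly your ``cloud'' idea with the cloud over $s$ equal to $G\times\{s\}$, but it sidesteps the need for a separated net, orbit representatives, or a two-stage construction.
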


\begin{proof}
Suppose that $G$ acts on a geodesic metric space $S$. Let $\Gamma $ be the graph with vertex set $V(\Gamma )=G\times S$ and the set of edges consisting of all pairs $\{ (g_1, s_1), (g_2, s_2) \}\subseteq G\times S$ such that $\d_S (s_1,s_2)\le 1$. The given action of $G$ on $S$ and the left action of $G$ on itself extend (in the usual sense) to an action on $V(\Gamma )$, which in turn can be extended to a $G$--action on $\Gamma$ since we define edges in a $G$-equivariant way. It is straightforward to verify that the action of $G$ on $V(\Gamma)$ is free and the map $s\to (1,s)$ induces a coarsely $G$-equivariant quasi-isometry $S\to \Gamma$.
\end{proof}

\begin{rem}
Note however that the action of $G$ on $\Gamma $ constructed in Proposition \ref{graph} may not be free as edges may have non-trivial (setwise) stabilizers generated by involutions. We could make the action of $G$ free by doubling these edges, but having free action on the vertex set is sufficient for our goals.
\end{rem}

\begin{defn}
Given a group $G$, we denote by $\AG$ the collection of all equivalence classes of actions of $G$ on geodesic spaces (of cardinality at most $c$).
\end{defn}

\begin{defn}
Given a group $G$, a collection of subgroups $\Hl$, and $A\in \AG$, $(B_\lambda)_{\lambda\in \Lambda }\in \AHl$ we say that $A$ is an \emph{extension} of $B$ if every action $\mathcal A\in A$ is an extension of every action $\mathcal B_\lambda \in B_\lambda$ for all $\lambda \in \Lambda$.
\end{defn}

The following proposition allows us to replace `every' with `some' in the definition above.

\begin{prop}\label{AABB}
Let $G$ be a group and $\Hl$ a collection of subgroups of $G$. Let $A\in \AG$ and $(B_\lambda)_{\lambda\in \Lambda }\in \AHl$. Then $A$ is an extension of $B$ if and only if for all $\lambda \in \Lambda$, some action $\mathcal A\in A$ is an extension of some action $\mathcal B_\lambda \in B_\lambda$.
\end{prop}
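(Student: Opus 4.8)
The plan is to reduce the statement to a bookkeeping exercise about composing coarsely equivariant quasi-isometries, using Lemma~\ref{lem:cinv} as the key tool. The ``only if'' direction is trivial by definition, so the content is in the ``if'' direction. Suppose that for every $\lambda \in \Lambda$ there exist $\mathcal A \in A$ and $\mathcal B_\lambda \in B_\lambda$ such that $\mathcal A$ is an extension of $\mathcal B_\lambda$; note carefully that the witnessing action $\mathcal A$ may a priori depend on $\lambda$, so the first thing I would do is fix a single representative $\mathcal A_0 = (G \curvearrowright S_0) \in A$ once and for all, together with, for each $\lambda$, a representative $\mathcal B_\lambda^0 = (H_\lambda \curvearrowright R_\lambda^0) \in B_\lambda$. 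I must then show that $\mathcal A_0$ is an extension of $\mathcal B_\lambda^0$ for every $\lambda$, and since $A$ and $B_\lambda$ are arbitrary representatives of fixed equivalence classes, this will establish the ``every/every'' formulation (once we also know the extension property is invariant under $\sim$ on both sides — which I would either cite as already-noted in the excerpt after Definition~\ref{eqdef}, or fold into the same computation).

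The core step is the following transitivity-type claim: if $G \curvearrowright S$ is an extension of $H_\lambda \curvearrowright R$, if $G \curvearrowright S \sim G \curvearrowright S'$, and if $H_\lambda \curvearrowright R \sim H_\lambda \curvearrowright R'$, then $G \curvearrowright S'$ is an extension of $H_\lambda \curvearrowright R'$. To prove it, let $f \colon R \to S$ be a coarsely $H_\lambda$-equivariant quasi-isometric embedding, let $\phi \colon S \to S'$ be a coarsely $G$-equivariant quasi-isometry, and let $\psi \colon R' \to R$ be a coarsely $H_\lambda$-equivariant quasi-isometry; for $\psi$ I would invoke Lemma~\ref{lem:cinv} to produce it as a coarse inverse of a given coarsely $H_\lambda$-equivariant quasi-isometry $R \to R'$, which Lemma~\ref{lem:cinv} guarantees is again a coarsely $H_\lambda$-equivariant quasi-isometry. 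Then $\phi \circ f \circ \psi \colon R' \to S'$ is the desired map. That it is a quasi-isometric embedding is immediate since a composition of a quasi-isometry, a quasi-isometric embedding, and a quasi-isometry is a quasi-isometric embedding. For coarse $H_\lambda$-equivariance one estimates, for $x \in R'$ and $h \in H_\lambda$,
\[
\d_{S'}\bigl(\phi f \psi(hx),\, h\,\phi f \psi(x)\bigr) \le \d_{S'}\bigl(\phi f \psi(hx),\, \phi f (h\psi(x))\bigr) + \d_{S'}\bigl(\phi f(h\psi(x)),\, \phi(h f\psi(x))\bigr) + \d_{S'}\bigl(\phi(h f\psi(x)),\, h\,\phi f\psi(x)\bigr),
\]
and bounds the three terms in turn: the first using that $\phi f$ is Lipschitz-up-to-constants (being a quasi-isometric embedding) together with coarse $H_\lambda$-equivariance of $\psi$; the second using that $\phi$ is Lipschitz-up-to-constants together with coarse $H_\lambda$-equivariance of $f$ (noting $H_\lambda \le G$, so $h$ acts on $S$); the third directly using coarse $G$-equivariance of $\phi$ (again with $h \in H_\lambda \le G$). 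Each bound is uniform in $h$, so the supremum over $h \in H_\lambda$ is finite, as required.

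With this claim in hand the proposition follows: for each $\lambda$, pick the witnessing pair $\mathcal A_\lambda \in A$ (extension of) $\mathcal B_\lambda' \in B_\lambda$; since $\mathcal A_\lambda \sim \mathcal A_0$ and $\mathcal B_\lambda' \sim \mathcal B_\lambda^0$, the claim gives that $\mathcal A_0$ is an extension of $\mathcal B_\lambda^0$. I expect the main obstacle to be purely notational: keeping track of the various quasi-isometry constants and coarse-equivariance bounds through the triple composition, and being careful that the representative $\mathcal A \in A$ witnessing the hypothesis is allowed to vary with $\lambda$ while the conclusion demands a uniform statement — which is exactly why passing through a fixed $\mathcal A_0$ via the $\sim$-invariance claim is essential rather than cosmetic. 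No genuinely hard point arises; the proposition is a formal consequence of Lemma~\ref{lem:cinv} and the stability of the class of coarsely equivariant quasi-isometric embeddings under composition with coarsely equivariant quasi-isometries.
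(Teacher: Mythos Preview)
Your proposal is correct and follows essentially the same approach as the paper: the paper's proof is the one-liner ``this follows immediately from the fact that a composition of a quasi-isometry and a quasi-isometric embedding (in any order) is again a quasi-isometric embedding,'' and your argument is precisely a careful unpacking of that sentence, together with the (implicit in the paper) verification that coarse $H_\lambda$-equivariance survives the composition. Your explicit observation that the witnessing $\mathcal A \in A$ may vary with $\lambda$, and your use of Lemma~\ref{lem:cinv} to reverse the direction of one of the quasi-isometries, are exactly the details the paper suppresses.
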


\begin{proof}
This follows immediately from the fact that a composition of a quasi-isometry and a quasi-isometric embedding (in any order) is again a quasi-isometric embedding.
\end{proof}

\subsection{Induced action}

We are now ready to introduce the concept of an induced action. Throughout this section, let $G$ be a group, $\Hi$ a finite collection of subgroups of $G$, and let $X$ be a generating set of $G$ relative to $\Hi$. Proposition \ref{geo} and Proposition \ref{graph} imply that in order to solve the extension problem, it suffices to deal with actions on connected graphs whose restrictions to vertex sets are free. Henceforth, we fix a collection of actions
$$
\mathcal A = \{ H_1\curvearrowright R_1, \ldots, H_n\curvearrowright R_n\}
$$
on graphs $R_1, \ldots , R_n$ such that the restrictions of these actions to the vertex sets $V(R_1)$, \ldots, $V(R_n)$ are free.

We denote by $\Gamma (G,X)$ the Cayley graph of $G$ with respect to $X$. Notice that $\Gamma (G,X)$ is not necessarily connected since $X$ may not generate $G$ by itself.

Roughly speaking, the induced action of $G$ associated to these data is the natural action of $G$ on the space obtained from $\Gamma (G,X)$ by gluing copies of $R_i$ to all cosets $gH_i$ along a fixed $H_i$--orbit in $R_i$. The construction will depend on the choice of coset representatives of $H_i$ and particular orbits in each $R_i$. However, all induced actions constructed in this way will be equivalent; finiteness of the collection of subgroups will be essentially used in proving this.

To define the induced action formally, we fix a \emph{collection of base vertices}
$$
\mathcal B=\{ b_1, \ldots, b_n\},
$$
where  $b_i\in V(R_i)$. For each $i$ we also fix a collection $T_i$ of representatives of left cosets of $H_i$ in $G$ and denote by $t_i\colon G\to T_i$ the map assigning to an element $g\in G$, the representative of the coset $gH_i$. Without loss of generality, we can (and will) assume that
\begin{equation}\label{t1}
t_i(h)=1 \;\; \forall \; h\in H_i.
\end{equation}
Let
$$
\mathcal T=\{ t_1, \ldots, t_n\}.
$$
We call $\mathcal T$ the \emph{transversal} of $G$ with respect to $\{ H_1, \ldots, H_n\}$.

For each $i=1, \ldots, n$, let
\begin{equation}\label{Yi}
Y_i = G/H_i \times R_i.
\end{equation}
We think of $Y_i$ as a graph, which is a disjoint union of copies $\{ gH_i\} \times R_i$ of $R_i$, for all $gH_i\in G/H_i$. We endow every $Y_i$ with the combinatorial metric (which may take infinite values as $Y_i$ is not connected unless $H_i=G$), so every $Y_i$ becomes an extended metric space.

We first want to define an action of $G$ on each $Y_i$ such that:
\begin{enumerate}
\item[({\bf A}$_1$)] It extends the action of $H_i$ on $R_i$ ($R_i$ is identified with $\{H_i\}\times R_i$) in the set theoretic sense, i.e., $h(H_i, r)=(H_i,hr)$ for all $h\in H_i$ and $r\in R_i$.
\item[({\bf A}$_2$)] $G$ permutes subsets $\{ aH_i\} \times R_i$ according to the rule $g(\{ aH_i\} \times R_i)=\{ gaH_i\} \times R_i$ for all $g,a\in G$.
\end{enumerate}

It is fairly easy to see that there is a unique way to define such an action of $G$. For every $i\in \{ 1, \ldots, n\}$, $r\in R_i$, and $g,a \in G$, we let
\begin{equation}\label{ind1}
 g(aH_i,r)=(gaH_i, \alpha_i(g,a)r)
\end{equation}
where
\begin{equation}\label{ind2}
\alpha_i(g,a)=(t_i(ga))^{-1}gt_i(a).
\end{equation}
Note that $\alpha_i(g,a)r\in R_i$ is well-defined since $\alpha_i (g,a)\in H_i$.

\begin{lem}\label{act}
Formulas (\ref{ind1}) and (\ref{ind2}) define an isometric action of $G$ on each $Y_i$ satisfying conditions ({\bf A}$_1$) and ({\bf A}$_2$). The restriction of this action to each vertex set $V(Y_i)$ is free.
\end{lem}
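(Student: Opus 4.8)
The plan is to derive the entire statement from two elementary properties of the function $\alpha_i$. First, since $t_i(ga)$ lies in the coset $gaH_i$ and $t_i(a)$ lies in $aH_i$, a one-line check shows $\alpha_i(g,a)=(t_i(ga))^{-1}gt_i(a)\in H_i$, so that the right-hand side of (\ref{ind1}) is well-defined (this is already remarked in the text). Second — and this is the one computation I would do carefully — $\alpha_i$ satisfies the cocycle identity
\[
\alpha_i(g,ha)\,\alpha_i(h,a)=\alpha_i(gh,a)\qquad\text{for all }g,h,a\in G,
\]
which follows by cancelling the pair $t_i(ha)(t_i(ha))^{-1}$ in the product $(t_i(gha))^{-1}gt_i(ha)\cdot(t_i(ha))^{-1}ht_i(a)$.

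Granting these, I would verify the action axioms. A direct computation gives $\alpha_i(1,a)=1$, hence $1\cdot(aH_i,r)=(aH_i,r)$; and, using the cocycle identity,
\[
g\bigl(h(aH_i,r)\bigr)=\bigl(ghaH_i,\ \alpha_i(g,ha)\alpha_i(h,a)r\bigr)=\bigl(ghaH_i,\ \alpha_i(gh,a)r\bigr)=(gh)(aH_i,r),
\]
so (\ref{ind1})--(\ref{ind2}) define an action of $G$ on $Y_i$. It is isometric for the (extended) combinatorial metric: $g$ maps the connected component $\{aH_i\}\times R_i$ onto $\{gaH_i\}\times R_i$ by the rule $r\mapsto\alpha_i(g,a)r$, which is an isometry of $R_i$ because $\alpha_i(g,a)\in H_i$ acts isometrically on $R_i$; distinct components lie at distance $\infty$ from one another and $g$ permutes the components bijectively, so every distance — finite or infinite — is preserved.

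Conditions ({\bf A}$_1$) and ({\bf A}$_2$) are then immediate: the $G/H_i$-coordinate of $g(aH_i,r)$ is $gaH_i$ independently of $r$, giving ({\bf A}$_2$); and for $h\in H_i$ the normalization (\ref{t1}) gives $t_i(h)=t_i(1)=1$, hence $\alpha_i(h,1)=h$ and $h(H_i,r)=(H_i,hr)$, giving ({\bf A}$_1$). For freeness on $V(Y_i)$, I would suppose $g(aH_i,r)=(aH_i,r)$ with $r\in V(R_i)$; comparing first coordinates yields $t_i(ga)=t_i(a)$, comparing second coordinates yields $\alpha_i(g,a)r=r$, and since $\alpha_i(g,a)\in H_i$ and the $H_i$-action on $V(R_i)$ is free we get $\alpha_i(g,a)=1$, whence $g=t_i(ga)(t_i(a))^{-1}=1$.

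I do not anticipate a real obstacle: the argument is a chain of routine verifications. The only step that genuinely has to be checked is the cocycle identity for $\alpha_i$, from which all the action axioms follow formally; the only conceptual point worth flagging is that ``isometric'' must be understood for the extended metric on the possibly disconnected graph $Y_i$, so that the bijective permutation of components imposes no additional constraint.
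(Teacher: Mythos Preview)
Your proof is correct and follows essentially the same approach as the paper: both hinge on the cocycle identity $\alpha_i(gh,a)=\alpha_i(g,ha)\alpha_i(h,a)$ (the paper's equation (\ref{afga})), then verify the action axioms, the conditions ({\bf A}$_1$)--({\bf A}$_2$), isometry via the component-permutation picture, and freeness. The only cosmetic difference is that in the freeness step the paper first normalizes to $a\in T_i$ before computing $\alpha_i(g,a)=a^{-1}ga$, whereas you work directly with $t_i(ga)=t_i(a)$; both arrive at $g=1$ the same way.
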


\begin{proof}
Throughout the proof we fix some $i\in \{ 1, \ldots, n\}$. First let us check that the identity element acts trivially. We obviously have
$\alpha_i(1,a)=(t_i(a))^{-1}t_i(a)=1$ and hence $1(aH_i,r)=(aH_i, \alpha_i(1,a)r)=(aH_i,r)$. Further, using (\ref{ind2}) we obtain
\begin{equation}\label{afga}
\begin{array}{rcl}
\alpha _i(fg,a)& = & (t_i(fga))^{-1}fgt_i(a)= (t_i(fga))^{-1}ft_i(ga) ((t_i(ga))^{-1}gt_i(a))=\\
&& \\&& \alpha _i(f,ga)\alpha _i(g,a)
\end{array}
\end{equation}
for all $f,g,a\in G$. Therefore,
$$
\begin{array}{rcl}
(fg)(aH_i, r)& = & (fgaH_i, \alpha_i(fg,a)r)=(fgaH_i, \alpha _i(f,ga)\alpha _i(g,a)r)=\\ &&\\&&f(gaH_i,\alpha_i(g,a)r)=f(g(aH_i, r)).
\end{array}
$$
Thus formulas (\ref{ind1}) and (\ref{ind2}) indeed define an action of $G$.

That the restriction of the action to $V(Y_i)$ is free easily follows from our assumption that the action $H_i\curvearrowright V(R_i)$ is free. Indeed assume that we have
\begin{equation}\label{gfix}
g(aH_i, r)=(aH_i, r)
\end{equation}
for some $(aH_i, r)\in V(Y_i)$. Without loss of generality we can assume that $a\in T_i$, i.e., $t_i(a)=a$.  Comparing (\ref{gfix}) to (\ref{ind1}), we obtain $ga\in aH_i$ and hence $t_i(ga)=a$. Together with (\ref{ind2}), this implies
$\alpha_i(g,a)= a^{-1}ga$. Again combining (\ref{gfix}) and (\ref{ind1}), we obtain $a^{-1}gar=r$. Since the action of $G$ on $V(R_i)$ is free, we have $a^{-1}ga=1$ and hence $g=1$.

If $h\in H_i$, we obtain $\alpha_i(h,1)=(t_i(h))^{-1}ht_i(1)=h$ using (\ref{t1}). This implies ({\bf A}$_1$). Condition ({\bf A}$_2$) follows from (\ref{ind1}) immediately.

It remains to show that $G\curvearrowright Y_i$ is isometric. Let $y_1=(a_1H_i, r_1)$ and $y_2=(a_2H_i, r_2)$ be two points of $Y_i$. First assume that $a_1H_i\ne a_2H_i$. Then $\d_{Y_i} (y_1, y_2)=\infty $. By (\ref{ind1}) we also have $\d_{Y_i} (gy_1, gy_2)=\infty$ as $ga_1H_i\ne ga_2H_i$ in this case. Further if $a_1=a_2=a$, then $$ \d_{Y_i} (gy_1, gy_2)= \d_{R_i} (\alpha_i(g,a)r_1, \alpha_i(g,a)r_2)=\d_{R_i} (r_1, r_2)=\d_{Y_i} (y_1, y_2)$$ by (\ref{ind1}) and the definition of the extended metric on $Y_i$.
\end{proof}

\begin{defn}[The induced action]\label{def-ia}
Let $S_{X, \mathcal T, \mathcal B, \mathcal A}$ denote the graph obtained by gluing $Y_i$, $i=1, \ldots, n$, to $\Gamma(G,X)$ by identifying vertices $g(H_i,b_i)\in V(Y_i)$ and $g\in V(\Gamma (G, X))$ for all $g\in G$ and $i=1, \ldots, n$. We call the graph $S_{X, \mathcal T, \mathcal B, \mathcal A}$ the \emph{space of the induced action}.

Since $S_{X, \mathcal T, \mathcal B, \mathcal A}$ is obtained by gluing, vertices of $S_{X, \mathcal T, \mathcal B, \mathcal A}$ are, formally speaking, equivalence classes. By abuse of notation, we will use representatives of these equivalence classes to denote vertices of $S_{X, \mathcal T, \mathcal B, \mathcal A}$. Thus we think of a vertex of $S_{X, \mathcal T, \mathcal B, \mathcal A}$ is a pair $(gH_i, v)$, where $g\in G$ and $v\in V(R_i)$ for some $i$. If $v$ does not belong to the $H_i$-orbit of $b_i$, then the equivalence class of $(gH_i, v)$ consists of a single pair. Otherwise the corresponding vertex has exactly $n$ representatives. Indeed, in the graph $S_{X, \mathcal T, \mathcal B, \mathcal A}$, we have
\begin{equation}\label{ec}
(gH_i, t_i(g)^{-1}gb_i) = g(H_i, b_i) = g(H_j, b_j)= (gH_j, t_j^{-1}(g)gb_j)
\end{equation}
for all $i,j\in \{ 1, \ldots , n\}$.

Since the gluing maps used to construct $S_{X, \mathcal T, \mathcal B, \mathcal A}$ are $G$-equivariant, the actions of $G$ on graphs $Y_i$ and $\Gamma (G,X)$ induce an action of $G$ on $S_{X, \mathcal T, \mathcal B, \mathcal A}$, denoted by $\Ind_{X, \mathcal T, \mathcal B} (\mathcal A)$.
\end{defn}

Since the actions $H_i\curvearrowright V(Y_i)$ are free by Lemma \ref{act}, the natural ($G$-equivariant) maps $Y_i\to S_{X, \mathcal T, \mathcal B, \mathcal A}$ and $\Gamma (G,X)\to S_{X, \mathcal T, \mathcal B, \mathcal A}$ are injective. Henceforth, we will think of $\Gamma (G,X)$ and $Y_i$, $i=1, \ldots, n$, as subgraphs of $S_{X, \mathcal T, \mathcal B, \mathcal A}$.

\begin{lem}\label{SisMS}
The graph $S_{X, \mathcal T, \mathcal B, \mathcal A}$ is connected.
\end{lem}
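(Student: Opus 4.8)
The plan is to establish connectivity in two steps: first reduce to joining vertices that come from the Cayley graph $\Gamma(G,X)$, and then exploit the fact that $X$ together with the subgroups $H_i$ generates $G$. Recall that each $R_i$ is a connected graph and that, by Lemma \ref{act} and Definition \ref{def-ia}, $G$ acts on $S_{X, \mathcal T, \mathcal B, \mathcal A}$ by automorphisms of the graph. For the first step, note that every vertex of $S_{X, \mathcal T, \mathcal B, \mathcal A}$ has a representative of the form $(aH_i, r)$ with $a\in G$ and $r\in V(R_i)$, and hence lies in the subgraph $\{aH_i\}\times R_i$ of $Y_i$, which is isomorphic to the connected graph $R_i$. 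That subgraph contains the vertex $(aH_i, b_i)$, which by construction of $S_{X, \mathcal T, \mathcal B, \mathcal A}$ is identified with the element $t_i(a)\in V(\Gamma(G,X))$ (indeed $t_i(a)(H_i,b_i)=(aH_i,b_i)$ by (\ref{ec}) together with $t_i(t_i(a))=t_i(a)$). So I would first observe that any vertex of $S_{X, \mathcal T, \mathcal B, \mathcal A}$ is joined, by a path lying inside the appropriate copy of $R_i$, to a vertex of $\Gamma(G,X)$, i.e. to some element of $G$. It then suffices to show that any two elements $g,g'\in G$, regarded as vertices of $\Gamma(G,X)\subseteq S_{X, \mathcal T, \mathcal B, \mathcal A}$, are joined by a path; and applying the automorphism of $S_{X, \mathcal T, \mathcal B, \mathcal A}$ given by the action of $g^{-1}$, it is enough to join the vertex $1$ to an arbitrary $g\in G$.

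For the second step, since $G=\langle X\cup\bigcup_i H_i\rangle$, write $g=s_1s_2\cdots s_m$ where each $s_j$ lies in $X$ or in some $H_{i(j)}$, and induct on $m$; the case $m=0$ is trivial. For the inductive step set $a=s_1\cdots s_{m-1}$, so that it suffices to join $a$ to $as_m$ by a path in $S_{X, \mathcal T, \mathcal B, \mathcal A}$. If $s_m\in X$, then $a$ and $as_m$ span an edge of $\Gamma(G,X)\subseteq S_{X, \mathcal T, \mathcal B, \mathcal A}$. If $s_m\in H_i$, then $as_mH_i=aH_i$ and hence $t_i(as_m)=t_i(a)$; consequently, using (\ref{ec}) as above, the vertices $a$ and $as_m$ of $S_{X, \mathcal T, \mathcal B, \mathcal A}$ coincide respectively with the vertices $t_i(a)^{-1}ab_i$ and $t_i(a)^{-1}as_mb_i$ of the copy $\{aH_i\}\times R_i$ of $R_i$ — both of which do lie in $V(R_i)$, since $t_i(a)^{-1}a$ and $t_i(a)^{-1}as_m$ belong to $H_i$. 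As $R_i$ is connected, these two vertices are joined by a path inside $\{aH_i\}\times R_i\subseteq S_{X, \mathcal T, \mathcal B, \mathcal A}$, completing the induction and hence the proof.

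The argument is essentially bookkeeping about the gluing maps; the one point that genuinely needs care is the final case, where one must check that for $s_m\in H_i$ the two group elements $a$ and $as_m$ are glued into the \emph{same} copy $\{aH_i\}\times R_i$ of $R_i$ — which is precisely the place where $as_mH_i=aH_i$, and hence $t_i(as_m)=t_i(a)$, is used — so that connectedness of $R_i$ can be invoked. No deeper difficulty is anticipated; in particular, the hypothesis that the collection $\Hi$ is finite plays no role in this lemma.
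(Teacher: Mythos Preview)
Your proof is correct and follows essentially the same two-step approach as the paper: first connect an arbitrary vertex to a vertex of $\Gamma(G,X)$ inside a copy of $R_i$, then connect any two elements of $G$ by decomposing one into a product of elements of $X\cup\bigcup_i H_i$ and stepping through the product using edges of $\Gamma(G,X)$ for $X$-letters and connectedness of the relevant copy of $R_i$ for $H_i$-letters. The only cosmetic differences are that you reduce to the basepoint $1$ via the $G$-action and phrase the second step as an induction, whereas the paper works with arbitrary endpoints $y,z$ directly.
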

\begin{proof} By construction, every vertex $(gH_i,v)$ in $S_{X, \mathcal T, \mathcal B, \mathcal A}$ can be connected to the vertex $g=g(H_i, b_i)$ of $\Gamma(G,X)\subseteq S_{X, \mathcal T, \mathcal B, \mathcal A}$ inside $\{ gH_i\} \times R_i\subseteq Y_i$. Thus it suffices to show that every two vertices of $\Gamma(G,X)$ can be connected by a path in $S_{X, \mathcal T, \mathcal B, \mathcal A}$

Let $y=y(H_i,b_i)$ and $z=z(H_i, b_i)$ be any two vertices of $\Gamma(G,X)$. Since $X\cup\left(\bigcup_{i=1}^n H_i\right)$ generates $G$, we have
$$
z=yx_1h_1x_2h_2\ldots x_mh_m,
$$
where $x_j\in X\cup\{1\}$ and $h_j\in H_{i(j)}$ for each $j$. By construction, any two vertices of the form $g=g(H_i,b_i)$ and $gx_j=gx_j(H_i,b_i)$ are connected by an edge of $\Gamma(G,X)$, while any two vertices of the form $g=g(H_i,b_i)= g(H_{i(j)}, b_{i(j)})$ and $gh_j=gh_j(H_i,b_i)=gh_j(H_{i(j)}, b_{i(j)})$ are connected by a path in the subgraph $\{gH_{i(j)}\}\times R_{i(j)}$ of $S_{X, \mathcal T, \mathcal B, \mathcal A}$. Thus, there is a path in $S_{X, \mathcal T, \mathcal B, \mathcal A}$ connecting $y$ to $z$.
\end{proof}

We are now ready to prove the main result of this section.

\begin{prop}\label{prop-ind}
Let $G$ be a group and let $\{ H_1, \ldots, H_n\}$ be a collection of subgroups of $G$. Let $X$, $X^\prime$ be generating sets of $G$ relative to $\Hi$ such that  $|X\bigtriangleup X^\prime|<\infty$.
Let
$$
\mathcal T=\{ t_1, \ldots, t_n\}\;\;\; {\rm and}\;\;\; \mathcal T^\prime =\{ t_1^\prime, \ldots, t_n^\prime\}
$$
be transversals of $G$ with respect to $\Hi$. Let
$$
\mathcal A =\{ H_1\curvearrowright R_1, \ldots, H_n\curvearrowright R_n\}\;\;\; {\rm and}\;\;\; \mathcal A^\prime =\{ H_1\curvearrowright R_1^\prime, \ldots, H_n\curvearrowright R_n^\prime\}
$$
be collections of actions on graphs $R_i$ and $R_i^\prime$ such that the restrictions of these actions to vertex sets are free and $H_i\curvearrowright R_i \sim H_i\curvearrowright R_i^\prime$ for all $i$. Finally let
$$
\mathcal B=\{ b_1, \ldots, b_n\} \;\;\; {\rm and}\;\;\; \mathcal B^\prime=\{ b_1^\prime, \ldots, b_n^\prime\}
$$
be collections of base vertices in graphs $R_i$ and $R^\prime_i$, respectively. Then
\[\Ind_{X,\mathcal T,\mathcal B}(\mathcal A)\sim \Ind_{X^\prime,\mathcal T^\prime,\mathcal B^\prime}(\mathcal A^\prime).\]
\end{prop}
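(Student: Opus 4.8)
The plan is to show the equivalence in three independent reduction steps, each of which changes only one of the four parameters at a time, so that by transitivity of $\sim$ (Proposition \ref{equiv}) it suffices to handle: (i) changing the base vertices $\mathcal B \rightsquigarrow \mathcal B'$ with everything else fixed; (ii) changing the transversal $\mathcal T \rightsquigarrow \mathcal T'$; (iii) changing the relative generating set $X \rightsquigarrow X'$ with $|X \bigtriangleup X'| < \infty$; and (iv) changing the collection of actions $\mathcal A \rightsquigarrow \mathcal A'$ through the given equivalences $H_i \curvearrowright R_i \sim H_i \curvearrowright R_i'$. In each case I would exhibit an explicit coarsely $G$-equivariant quasi-isometry between the two spaces of induced actions, typically one that is the identity on the Cayley graph part $\Gamma(G,X)$ and acts ``block by block'' on the glued copies $\{gH_i\} \times R_i$.

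For step (iii), changing $X$ to $X'$ with finite symmetric difference: the vertex sets of $S_{X,\mathcal T,\mathcal B,\mathcal A}$ and $S_{X',\mathcal T,\mathcal B,\mathcal A}$ are literally the same set, and the two graphs differ only in the edges of $\Gamma(G,X)$ versus $\Gamma(G,X')$. Since $X \bigtriangleup X'$ is finite and every $g \in \langle X \cup X'\rangle$ (which contains all the pieces we need), an edge labeled by $x \in X \setminus X'$ can be replaced by a path of bounded length over $X' \cup (X\bigtriangleup X')$; the key point is that finiteness of $X \bigtriangleup X'$ gives a uniform bound on how much distances are distorted, and conversely. So the identity map on vertices is a $G$-equivariant quasi-isometry. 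For step (ii), changing the transversal $\mathcal T$ to $\mathcal T'$: I would use the map that is the identity on $\Gamma(G,X)$ and sends a vertex $(gH_i, v)$ of $Y_i$ (built with $t_i$) to the ``same'' vertex of $Y_i'$ (built with $t_i'$); concretely one checks that the recoordinatization via $t_i(g)^{-1}t_i'(g) \in H_i$ intertwines the two $G$-actions defined by \eqref{ind1}--\eqref{ind2}, using the cocycle identity \eqref{afga}. This map is even a $G$-equivariant isometry, once the base-vertex identifications are tracked correctly — so in fact steps (ii) together with a careful bookkeeping absorbs part of (i) as well. For step (i), moving the base vertex $b_i$ to $b_i'$ inside the same $R_i$: if $b_i'$ lies in the $H_i$-orbit of $b_i$, the gluing is $G$-equivariantly identical after relabeling; in general $b_i$ and $b_i'$ are at some finite distance in $R_i$, and the map fixing $\Gamma(G,X)$ and translating within each $\{gH_i\}\times R_i$ by the corresponding bounded amount is a coarsely $G$-equivariant quasi-isometry — here one needs the orbit-translation discrepancies to be uniformly bounded over $g \in G$, which holds because the action is by isometries.

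The genuinely substantive step is (iv): replacing each $H_i \curvearrowright R_i$ by an equivalent action $H_i \curvearrowright R_i'$. By hypothesis there is a coarsely $H_i$-equivariant quasi-isometry $\phi_i \colon R_i \to R_i'$; after adjusting base vertices (step (i)) we may assume $\phi_i(b_i) = b_i'$. The natural candidate map $S_{X,\mathcal T,\mathcal B,\mathcal A} \to S_{X,\mathcal T,\mathcal B,\mathcal A'}$ is the identity on $\Gamma(G,X)$ and on the copy $\{gH_i\}\times R_i$ it is $(gH_i, v) \mapsto (gH_i, \alpha_i(g, t_i(g))\,\phi_i(\alpha_i(g,t_i(g))^{-1} v))$ — i.e., transport $\phi_i$ around by the $G$-action so that it becomes genuinely $G$-equivariant, not merely $H_i$-equivariant. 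The two main things to verify are: this map is well-defined across the gluing (the base-vertex identifications \eqref{ec} are respected because $\phi_i(b_i)=b_i'$), and it is a quasi-isometry with constants uniform over all the blocks. Uniformity of the quasi-isometry constants across the infinitely many cosets $gH_i$ is automatic since each block is an isometric copy of $(R_i,\d_{R_i})$ with the same $\phi_i$; the place where I expect to have to be careful — and this is \textbf{the main obstacle} — is controlling distances between points lying in \emph{different} blocks, i.e., showing that a geodesic in $S_{X,\mathcal T,\mathcal B,\mathcal A}$ that passes through several glued copies maps to a path of comparable length in $S_{X,\mathcal T,\mathcal B,\mathcal A'}$. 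This is exactly where the finiteness of the collection $\{H_1,\dots,H_n\}$ is used: a geodesic can enter and leave only finitely many ``types'' of blocks, and the additive error $\sup_{h}\d_{R_i'}(\phi_i(hx), h\phi_i(x))$ from coarse equivariance is incurred only at the bounded number of transition vertices along any path of a given length, not at every edge. Making this counting precise — bounding the number of block-transitions of a geodesic in terms of its length, and assembling the per-block quasi-isometry estimates together with the coarse-equivariance errors into a single global quasi-isometry inequality — is the technical heart of the argument.
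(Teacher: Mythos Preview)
Your decomposition into four independent reductions matches the paper's strategy exactly, and your treatments of the transversal, basepoint, and generating-set changes are essentially what the paper does (the paper actually folds the basepoint change into the action change, handling $\mathcal B$ and $\mathcal A$ together in one map $\phi_2$, but this is cosmetic).

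Where you diverge is in step (iv), and here you are making your life harder than necessary in two ways. First, there is no need to conjugate $\phi_i$ by $\alpha_i(g,t_i(g))$ to manufacture a genuinely $G$-equivariant map: the paper simply sets $\Phi(aH_i,v)=(aH_i,\rho_i(v))$ (with a small correction at base vertices to respect the gluing~\eqref{ec}) and checks that this is \emph{coarsely} $G$-equivariant, which is all the equivalence relation $\sim$ demands. Second, and more importantly, the ``main obstacle'' you identify --- controlling geodesics that pass through many blocks and counting transition vertices --- is a red herring. The paper never analyzes such geodesics. Instead it proves each map $\phi_j$ is Lipschitz by the trivial edge-by-edge check (adjacent vertices in the source go to vertices at uniformly bounded distance in the target), and then, rather than proving the lower quasi-isometry inequality directly, it exhibits a \emph{coarse inverse} built from coarse inverses $\rho_i'$ of the $\rho_i$ (supplied by Lemma~\ref{lem:cinv}). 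This coarse inverse is Lipschitz by the identical edge-by-edge argument, and a Lipschitz map with a Lipschitz coarse inverse is automatically a quasi-isometry. No geodesic bookkeeping, no transition counting.

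Finiteness of $n$ is indeed used, but only to guarantee that the quasi-isometry constants of the finitely many $\rho_i$ and the coarse-equivariance constants in~\eqref{rihbi} have a common bound; it plays no role in any combinatorial counting argument.
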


\begin{proof}
We fix coarsely $H_i$-equivariant quasi-isometries $\rho_i\colon R_i\to R^\prime_i$. Since changing $\rho_i$ by a bounded function does not violate the property of being a coarsely $H_i$-equivariant quasi-isometry, we can assume that $\rho_i$ maps $V(R_i)$ to $V(R_i^\prime)$ and
\begin{equation} \label{ribi}
\rho_i(b_i)=b_i^\prime
\end{equation}
without loss of generality. We also fix a constant $C$ such that the following inequalities hold for all $i=1, \ldots, n$:
\begin{equation}\label{rihbi}
\sup_{h\in H_i} \d_{R_i^{\prime}}(\rho_i(hb_i), h\rho_i(b_i))\le C,
\end{equation}
and
\begin{equation}\label{rixy}
\d_{R_i^{\prime}}(\rho_i(x), \rho_i(y))\le C\d_{R_i}(x,y)+C.
\end{equation}

To prove the proposition, we will construct coarsely $G$--equivariant quasi-isometries between the vertex sets
\[
V_1=V(S_{X,\mathcal T,\mathcal B,\mathcal A})\xrightarrow{\phi_1} V_2=V(S_{X,\mathcal T^\prime,\mathcal B,\mathcal A}) \xrightarrow{\phi_2} V_3=V(S_{X,\mathcal T^\prime,\mathcal B^\prime,\mathcal A^\prime}) \xrightarrow{\phi_3} V_4=V(S_{X^\prime,\mathcal T^\prime,\mathcal B^\prime,\mathcal A^\prime}).
\]
We assume that every $V_i$ is equipped by the metric induced from the corresponding graph; this metric is denoted by $\d_i$.

To change the transversals we use the map defined by
\[\phi_1(aH_i,v)=(aH_i,t^\prime_i(a)^{-1}t_i(a)v)\]
for all $a\in G$, $i=1, \ldots, n$, and $v\in V(R_i)$. Notice that this map is well-defined. Indeed if a point of $S_{X,\mathcal T,\mathcal B,\mathcal A}$ has more than one representative of type $(aH_i,v)$, then these representatives must be of the form (\ref{ec}) and we have
$$
\phi_1(gH_i, t_i(g)^{-1}gb_i)=(gH_i, t_i^\prime (g)^{-1}gb_i)=(gH_j, t_j^\prime (g)^{-1}gb_j)=\phi_1(gH_j, t_j(g)^{-1}gb_j)
$$
for all $i,j\in \{ 1,\ldots, n\}$.

To change the actions and basepoints, we define
\[
\phi_2(aH_i,v)=\left\{\begin{array}{ll} (aH_i,\alpha^\prime_i(a,1)b^\prime_i), & \textrm{if } v=\alpha_i^\prime (a,1)b_i, \\ (aH_i,\rho_i(v)), & \textrm{otherwise,}\end{array}\right.
\]
where $\alpha_i ^\prime $ is defined in the same way as $\alpha_i$ using $\mathcal T^\prime$ instead of $\mathcal T$; i.e., $\alpha_i^\prime (a,b)=t_i^\prime(ab)^{-1}at_i^\prime(b)$. Finally, to change relative generating sets we use the map
defined by $$\phi_3(aH_i,v)=(aH_i,v).$$
As above, it is straightforward to verify that $\phi_2$ and $\phi_3$ are well-defined.

The proof of the proposition is based on two lemmas.

\begin{lem} \label{fGeq}
The maps $\phi_1$, $\phi_2$, and $\phi_3$ are coarsely $G$-equivariant.
\end{lem}

\begin{proof}
Indeed, $\phi_1$ is $G$-equivariant:
\begin{align*}
g(\phi_1(aH_i,v))
&
=
g(aH_i,t^\prime_i(a)^{-1}t_i(a)v) = (gaH_i,\alpha^\prime_i(g,a)t^\prime_i(a)^{-1}t_i(a)v)
\\
&
= (gaH_i, t^\prime_i(ga)^{-1}gt_i(a)v) = \phi_T (g(aH_i,v)).
\end{align*}
We next check $\phi_2$. Fix some $g\in G$. First suppose that $x=a(H_i,b_i)$ for some $a\in G$ and $i\in\{1,\ldots,n\}$; then using (\ref{afga}) we obtain
$$
\phi_2(gx) =\phi_2(gaH_i, \alpha^\prime(ga,1)b_i) =(gaH_i, \alpha^\prime(ga,1)b_i^\prime)=g(aH_i, \alpha^\prime(a,1)b_i^\prime).
$$
In particular, $\phi_2$ is equivariant on vertices of $\Gamma(G,X)$.
If $x\notin V(\Gamma(G,X))$, then  we have  $g\phi_2(x)=(gaH_i,\alpha_i^\prime(g,a)\rho_i(v))$ and $\phi_2(gx)=(gaH_i,\rho_i(\alpha_i^\prime(g,a)(v)))$. Hence
$$
\sup_{g\in G}\d_3(\phi_2(gx),g\phi_2(x))\leq \sup_{g\in G} \d_{R_i}(\alpha_i^\prime(g,a)\rho_i(r),\rho_i(\alpha_i^\prime (g,a)v)))<\infty
$$
since the maps $\rho_i$ are coarsely $H_i$-equivariant.

Finally, for $\phi_3$ there is nothing to prove, its $G$-equivariance is obvious.
\end{proof}

\begin{lem} \label{fLip}
The maps $\phi_1$, $\phi_2$, and $\phi_3$ are Lipschitz.
\end{lem}

\begin{proof}
In each case it suffices to verify that there exists a constant $K$ such that if two vertices $x,y\in V_j$ span an edge in the corresponding graph,
then
\begin{equation}\label{di+1}
\d_{j+1}(\phi_j(x),\phi_j(y))\le K
\end{equation}
for $j=1,2,3$.

We consider two cases.

{\it Case 1}. Assume that $x$ and $y$ are connected by an edge of $\Gamma (G,X)\subseteq S_j$. Then we have $x=f(H_i,b_i)$, $y=g(H_i,b_i)$, and $f^{-1}g\in X$.  It is easy to see that $K=1$ works for $i=1,2$ by equivariance and for $i=3$ we can take $$K=\sup_{x\in X}{\d_4(f(H_i,b_i),fx(H_i,b_i))}= \sup_{x\in X}{\d_4((H_i,b_i),x(H_i,b_i))}<\infty $$ as $\d_4((H_i,b_i),x(H_i,b_i))\le 1$ for all $x\in X^\prime$ and $|X\bigtriangleup X^\prime|<\infty$.

{\it Case 2.} Next we assume that $x$ and $y$ are connected by an edge of $Y_i$ for some $i$. Then $x=(aH_i,u)$, $y=(aH_i,v)$ and $u,v$ span an edge of $R_i$. In this case it is straightforward to see that $K=1$ works for $i=1,3$. Let us now consider the case $i=2$. If none of $x$, $y$ is of the form $(aH_i,\alpha^\prime_i(a,1)b_i)$, then we can take $K=2C$ by (\ref{rixy}). Further, we can assume that the first line in the definition of $\phi_2$ applies to at most one of $x$, $y$ (otherwise Case 1 applies). Thus it suffices to consider the case $u=\alpha^\prime_i(a,1)b_i$, $\phi_2(aH_i,u)= (aH_i, \alpha^\prime_i(a,1)b_i^\prime)$ and $\phi_2(aH_i, v)=(aH_i, \rho_i(v))$. Combining (\ref{ribi}), (\ref{rihbi}), and (\ref{rixy}) we obtain
\begin{align*}
\d_3(\phi_2(x),\phi_2(y))& \le  \d_{R_i^\prime} (\alpha^\prime_i(a,1)b_i^\prime, \rho_i(v))= \d_{R_i^\prime} (\alpha^\prime_i(a,1)\rho_i(b_i), \rho_i(v))
\\ &
\le \d_{R_i^\prime} (\alpha^\prime_i(a,1)\rho_i(b_i), \rho_i(u))+ \d_{R_i^\prime} (\rho_i(u), \rho_i(v))\le 3C.
\end{align*}
\end{proof}

Let us now return to the proof of Proposition \ref{prop-ind}. It is easy to see that $\phi_1$ and $\phi_3$ are bijective on vertex sets and the inverse maps are obtained by reversing the roles of $\mathcal T$ and $\mathcal T^\prime$ (respectively $X$ and $X^\prime$) in the construction. Thus Lemmas \ref{fGeq} and \ref{fLip} apply to $\phi_1^{-1}$ and $\phi_3^{-1}$ as well. This easily implies that $\phi_1$ and $\phi_3$ are quasi-isometries.

Further, recall that a map $f\colon R\to S$ between two metric spaces is called \emph{coarsely surjective} if there exists a constant $\e$ such that $S$ coincides with the closed $\e$-neighborhood of $f(R)$. The map $\phi_2$ is coarsely surjective because so are all $\rho_i$. To find a coarse inverse of $\phi_2$, choose a coarsely $H_i$-invariant coarse inverse $\rho^\prime_i$ of each $\rho_i$ (see Definition \ref{defn:cinv} and Lemma \ref{lem:cinv}). The map $\phi_{2}^\prime$ defined in the same way as $\phi_2$ but reversing the roles of $(\mathcal{A},\mathcal{B})$ and $(\mathcal A^\prime,\mathcal B^\prime)$ and using the quasi-isometries $\rho^\prime_i$, we see that $\phi_2^\prime(\phi_2(y))=y$ whenever $y=a(H_i,b_i)$ for some $a\in G$, otherwise for $y=(H_i,v)$ we have
\[
 \d_S(y,\phi_2^\prime(\phi_2(y))) \le \d_{R_i}(r,\rho^\prime_i(\rho_i(r)))
\]
which is uniformly bounded. Thus $\phi_2^\prime$ is indeed a (coarsely $G$-equivariant) coarse inverse of $\phi _2$. It is straightforward to check that the existence of such a map implies that $\phi _2$ is also a quasi-isometry.

To complete the proof of the proposition, it remains to note that for every group $G$ acting on graphs $R$, $S$, every coarsely $G$-equivariant quasi-isometry $V(R)\to V(S)$ can be extended to a coarsely $G$-equivariant quasi-isometry $R\to S$. Thus the actions of $G$ on the spaces $S_{X,\mathcal T,\mathcal B,\mathcal A}$, $S_{X,\mathcal T^\prime,\mathcal B,\mathcal A}$, $S_{X,\mathcal T^\prime,\mathcal B^\prime,\mathcal A^\prime}$, and $S_{X^\prime,\mathcal T^\prime,\mathcal B^\prime,\mathcal A^\prime}$ are equivalent.
\end{proof}

Recall that $\AG$ denotes the set of all equivalence classes of actions of a group $G$ on geodesic metric spaces (of cardinality at most $c$). Proposition \ref{prop-ind} allows us to formulate the following.

\begin{defn}[Induced action]
Let
\begin{equation}\label{actH}
A=( [H_1\curvearrowright R_1], \ldots, [H_n\curvearrowright R_n] ) \in \AHi.
\end{equation}
By Proposition \ref{graph}, we can assume that every $R_i$ is a graph and the action of $H_i$ on $V(R_i)$ is free.  We define the \emph{induced action} $\Ind_X(A)\in \AG$ by the formula
$$\Ind_X(A)=[\Ind_{X,\mathcal T,\mathcal B}(\mathcal A)],$$
where $\mathcal T$ is any transversal of $G$ with respect to $\{ H_1, \ldots, H_n\}$, $\mathcal B=\{ b_1, \ldots, b_n\}$ is any collection of base vertices $b_i\in V(R_i)$, and
$$
\mathcal A=\{ H_1\curvearrowright R_1, \ldots, H_n\curvearrowright R_n\}.
$$

In the situation where $G$ is finitely generated relative to $H_1,\ldots,H_n$ the induced action does not depend on the choice of finite relative generating set by Proposition \ref{prop-ind}, so we may also define a map $$\Ind\colon\AHi\to\AG$$ by the formula
$$\Ind(A)=[\Ind_{X,\mathcal T,\mathcal B}(\mathcal A)],$$
for every $A$ as in (\ref{actH}), where $X$ is any finite relative generating set.
\end{defn}

Notice that properness and coboundedness of a group action of a metric space is invariant under equivalence. Thus it makes sense to define proper and cobounded elements of $\AG$. The following proposition summarizes some elementary properties of the induced action, which follow immediately from our construction.

\begin{prop}\label{proper}
Let $G$ be a group, $\Hi$ a collection of subgroups of $G$, $X$ a generating set of $G$ relative to $\Hi$. Let $A=( A_1, \ldots, A_n )\in \AHi$.
\begin{enumerate}
\item[(a)] Assume that $A_i$ is cobounded for all $i$. Then so is $\Ind_X(A)$.
\item[(b)] Suppose that $H_i$ is generated by a set $Y_i$ and let $Y=\bigcup_{i=1}^n Y_i$. If $A_i=[H_i\curvearrowright \Gamma (H_i, Y_i)]$, then $Ind_X(A) =[G \curvearrowright \Gamma (G, X\cup Y)]$.
\item[(c)] If $G$ is finitely generated relative to $\Hi$ and $A_i$ is proper for all $i$, then $\Ind(A)$ is proper.
\end{enumerate}
\end{prop}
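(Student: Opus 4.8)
The plan is to treat the three parts in turn, each time working with a convenient representative of the induced action. By Proposition~\ref{graph} I will realize each $A_i$ by an action $H_i\curvearrowright R_i$ on a \emph{connected} graph with free action on the vertex set; I fix base vertices $b_i\in V(R_i)$ and a transversal $\mathcal T$, and write $S=S_{X,\mathcal T,\mathcal B,\mathcal A}$, $s_0=1=(H_1,b_1)\in V(\Gamma (G,X))\subseteq V(S)$. Three structural facts will be used throughout: (i) $S$ is a connected graph (Lemma~\ref{SisMS}), hence geodesic; (ii) the vertices of $\Gamma (G,X)\subseteq S$ form the single $G$--orbit $Gs_0=G$; and (iii) the vertices of the copy $\{gH_i\}\times R_i$ that are glued to vertices of $\Gamma (G,X)$ (the \emph{attachment vertices}) are exactly those of the form $(gH_i,h b_i)$ with $h\in H_i$, since $g(H_i,b_i)=(gH_i,t_i(g)^{-1}g b_i)$. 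For part~(a): coboundedness is an equivalence invariant, so each $R_i$ is cobounded and $D_i:=\sup_{v\in V(R_i)}\d_{R_i}(v,H_i b_i)<\infty$. Inside its own copy every vertex $(gH_i,v)$ of $S$ lies within $D_i$ of an attachment vertex, hence within $\max_j D_j$ of $Gs_0$; therefore every point of $S$ is within $\max_j D_j+1$ of $Gs_0$, and since $S$ is geodesic we get $S=\bigcup_{g\in G}g\bar B_{\max_j D_j+1}(s_0)$, i.e.\ $\Ind_X(A)$ is cobounded.

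For part~(b) I take $R_i=\Gamma (H_i,Y_i)$, which is connected since $Y_i$ generates $H_i$, with $V(R_i)=H_i$ carrying the free left--multiplication action, and put $b_i=1$. Now $H_i b_i=H_i=V(R_i)$, so \emph{every} vertex of $\{gH_i\}\times R_i$ is an attachment vertex, and the assignment $(gH_i,v)\mapsto t_i(g)v$ identifies $V(Y_i)$ with $G=V(\Gamma (G,X))$, bijectively and $G$--equivariantly. Under this identification the edges coming from $\{gH_i\}\times R_i$ become precisely the edges $k\sim ky$, $y\in Y_i$, and the edges of $\Gamma (G,X)$ are the edges $k\sim kx$, $x\in X$; hence the identity map of $G$ is a $G$--equivariant isometry from $S$ onto $\Gamma (G,X\cup Y)$, so $\Ind_X(A)=[G\curvearrowright \Gamma (G,X\cup Y)]$.

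For part~(c) I use a finite relative generating set $X$ and first reduce properness of $G\curvearrowright S$ to the finiteness of balls around $s_0$. If a bounded set $B$ lies in $\bar B_N(s)$ for some vertex $s$, then $\{g:gB\cap B\ne\emptyset\}\subseteq\{g:\d_S(s,gs)\le 2N\}$, and, writing $m_s:=\d_S(1,s)<\infty$ (finite since $S$ is connected), the triangle inequality together with $\d_S(gs,g\cdot 1)=\d_S(s,1)=m_s$ gives $\{g:\d_S(s,gs)\le 2N\}\subseteq\{g\in G:\d_S(1,g)\le 2N+2m_s\}$. So it is enough to show that $\{k\in G:\d_S(1,k)\le N\}$ is finite for all $N$. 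Let $\d_i\in\mathcal M(H_i)$ be the left--invariant metric defined by $\d_i(h,h')=\d_{R_i}(h b_i,h' b_i)$ — integer--valued and finite because $R_i$ is a connected graph with free vertex action — and set $C=\{\d_i\}$, so $\d_{C,X}$ is the induced metric of Definition~\ref{IMG}. The key point is that $\d_S(1,k)\ge\d_{C,X}(1,k)$: cutting a path $p$ in $S$ from $1$ to $k$ at its successive visits to $V(\Gamma (G,X))=G$ writes $k=x_1\cdots x_m$, where between consecutive visits $g_{j-1},g_j$ the subpath is either a single edge of $\Gamma (G,X)$ (so $x_j:=g_{j-1}^{-1}g_j\in X$) or is confined to a single copy $\{aH_i\}\times R_i$ with $g_{j-1},g_j$ both attachment vertices of it (so $x_j\in H_i$ and the subpath has length at least $\d_{R_i}(b_i,x_j b_i)=\d_i(1,x_j)$); in either case the $j$-th subpath has length at least $w_{C,X}(x_j)$, so $\ell(p)\ge\sum_j w_{C,X}(x_j)\ge\d_{C,X}(1,k)$.

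It then only remains to note that $\{k\in G:\d_{C,X}(1,k)\le N\}$ is finite: a geodesic decomposition $k=x_1\cdots x_m$ with $\sum_j w_{C,X}(x_j)\le N$ has at most $N$ factors, and each factor lies in the finite set $X$ or in one of the sets $\{h\in H_i:\d_i(1,h)\le N\}$, which are finite because the actions $H_i\curvearrowright R_i$ are proper. I expect the only real work to be in part~(c): checking that a path in $S$ joining two vertices of $\Gamma (G,X)$ decomposes into ``excursions'', each confined to a single copy of some $R_i$, so that it honestly reads off a word over $X\cup\bigcup_i H_i$ of no smaller length — this is what pins $\d_S|_G$ to the induced metric $\d_{C,X}$ — together with the (minor) point that, since $V(S)$ may meet infinitely many $G$--orbits, properness must be propagated from the single basepoint $s_0$ via the triangle--inequality estimate above. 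Parts~(a) and~(b) then follow directly from the explicit description of $S$.
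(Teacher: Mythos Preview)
Your proof is correct. The paper does not actually give a proof of this proposition: it simply states that the three claims ``follow immediately from our construction.'' You have supplied the details the paper omits.

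A couple of remarks comparing your write--up with what appears elsewhere in the paper. The heart of your argument for (c) is the inequality $\d_S(1,k)\ge \d_{C,X}(1,k)$ for $k\in G$, obtained by cutting a path at its visits to $V(\Gamma(G,X))$ and bounding each excursion by the corresponding weight. This is exactly the inequality (\ref{dCXdS}) that the paper establishes later, in the proof of Lemma~\ref{inc->ext}; you have simply anticipated it. Your reduction of properness to finiteness of $\{k\in G:\d_S(1,k)\le N\}$ via the triangle inequality at an arbitrary basepoint is the expected move and is fine. For (b), your explicit identification $(gH_i,v)\mapsto t_i(g)v$ and the check that it is a $G$--equivariant graph isomorphism onto $\Gamma(G,X\cup Y)$ is precisely the kind of verification the paper has in mind when it says this is immediate. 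Part (a) is a direct consequence of your structural fact (iii), as you note.
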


In order to better understand the construction of the induced actions, we also recommend the reader to consider the following.

\begin{ex}
Let $G$ be the fundamental group of a finite graph of groups with vertex groups $\{ G_v\}_{v\in V}$. Let $A=(A_v)_{v\in V}$ be the collection of equivalence classes of trivial actions $A_v=[G_v\curvearrowright \{ pt\}]$. Then $G$ is finitely generated relative to $\{ G_v\}_{v\in V}$ and $\Ind(A)$ is the equivalence class of the action of $G$ on the associated Bass-Serre tree.
\end{ex}

\subsection{Incompressible subgroups}

Our next goal is to introduce the notion of an incompressible collection of subgroups and to prove Theorem \ref{main3}. The reader is encouraged to review Section 2.4 before reading the following.

\begin{defn}[Incompressible subgroups]
Let $G$ be a group, $\Hl$ a (possibly infinite) collection of subgroups of $G$, and let $X$ be a generating set of $G$ relative to $\Hl$. We say that the collection $\Hl$ is \emph{incompressible} in $G$ with respect to $X$ if for every collection $C=\{ \d_{H_\lambda}\}_{\lambda \in \Lambda}$ of left invariant metrics $\d_{H_\lambda} \in \mathcal M(H_\lambda)$, the inclusion map $H_\lambda \to G$ gives rise to a quasi-isometric embedding $(H_\lambda, \d_{H_\lambda})\to (G, \d_{C,X})$ for every $\lambda \in \Lambda$, where $\d_{C,X}$ is the corresponding induced metric on $G$ (see Definition \ref{IMG}).

Further, if $G$ is finitely generated with respect to $\Hl$, we say that the collection of subgroups $\Hl$ is \emph{incompressible} in $G$ if it is incompressible with respect to some finite generating set $X$ of $G$ relative to $\Hl$. In particular, this definition makes sense if $G$ is finitely generated.
\end{defn}

We could replace ``some finite generating set" with ``any finite generating set" in the definition above. Moreover, we have the following.

\begin{lem}\label{XYinc}
Let $X,Y$ be two generating sets of $G$ with respect to $\Hl$ such that the symmetric difference $X\triangle Y$ is finite. Then $\Hl$ is incompressible with respect to $X$ if and only if it is incompressible with respect to $Y$.
\end{lem}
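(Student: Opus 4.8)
The plan is to reduce the lemma to one observation: for any fixed collection $C=\{\d_{H_\lambda}\}_{\lambda\in\Lambda}$ with $\d_{H_\lambda}\in\mathcal M(H_\lambda)$, the two induced metrics $\d_{C,X}$ and $\d_{C,Y}$ on $G$ are bi-Lipschitz equivalent, with constants allowed to depend on $C$. Granting this, the identity is a bi-Lipschitz homeomorphism $(G,\d_{C,X})\to(G,\d_{C,Y})$, in particular a quasi-isometry; composing the inclusion $(H_\lambda,\d_{H_\lambda})\to(G,\d_{C,X})$ with it produces the inclusion $(H_\lambda,\d_{H_\lambda})\to(G,\d_{C,Y})$, and a map is a quasi-isometric embedding if and only if its composition with a quasi-isometry is. So for each $C$ and each $\lambda$ the inclusion is a quasi-isometric embedding into $(G,\d_{C,X})$ exactly when it is one into $(G,\d_{C,Y})$; quantifying over all $C$ yields the equivalence of the two incompressibility conditions. (That $\d_{C,X}$ and $\d_{C,Y}$ are genuine left-invariant metrics is automatic, since the supports of $w_{C,X}$ and $w_{C,Y}$ both generate $G$.)

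To prove the bi-Lipschitz equivalence, by symmetry (exchange the roles of $X$ and $Y$) it suffices to find a constant $N=N(C)\ge 1$ with $\d_{C,Y}(1,g)\le N\,\d_{C,X}(1,g)$ for all $g\in G$; left-invariance then upgrades this to all pairs of points. First I would fix $g\in G$, choose a geodesic decomposition $g=x_1\cdots x_k$ with respect to $w_{C,X}$, so that $\sum_i w_{C,X}(x_i)=\d_{C,X}(1,g)$ and each $w_{C,X}(x_i)<\infty$, and aim to show $\d_{C,Y}(1,x_i)\le N\,w_{C,X}(x_i)$ for every $i$; summing then gives $\d_{C,Y}(1,g)\le\sum_i\d_{C,Y}(1,x_i)\le N\sum_i w_{C,X}(x_i)=N\,\d_{C,X}(1,g)$. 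If $x_i\in X\cap Y$ both weights equal $1$. If $x_i\notin X$ then $x_i\in H_\mu$ for some $\mu$ and $w_{C,X}(x_i)=\min_{\nu\in\Lambda(x_i)}\d_{H_\nu}(1,x_i)\ge w_{C,Y}(x_i)\ge\d_{C,Y}(1,x_i)$. So in both of these cases $\d_{C,Y}(1,x_i)\le w_{C,X}(x_i)$, which is covered by any $N\ge 1$.

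The one case that is not purely formal — and the only place where finiteness of $X\triangle Y$ is used — is $x_i\in X\setminus Y$: here $w_{C,X}(x_i)=1$, but $w_{C,Y}(x_i)$ may be large, or even infinite if $x_i$ lies in no $H_\mu$. There are only finitely many such elements, so I would fix, once and for all, for each $g\in X\setminus Y$ an expression $g=y_1\cdots y_l$ with every $y_p$ lying in $Y$ or in some $H_\mu$ (possible because $Y\cup\bigcup_\mu H_\mu$ generates $G$); this gives $\d_{C,Y}(1,g)\le\sum_p w_{C,Y}(y_p)=:D_g(C)<\infty$, and taking $N:=\max\big(1,\max_{g\in X\setminus Y}D_g(C)\big)$ disposes of this case as well, since $w_{C,X}(x_i)=1$ for such $x_i$. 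It is harmless that $N$ depends on $C$, because the incompressibility hypotheses quantify over $C$ and each $C$ is treated separately. The remaining ingredients — existence of geodesic decompositions (the relevant minima are over nonempty sets of nonnegative integers), and the symmetry step using the finitely many elements of $Y\setminus X$ — are routine from the definitions in Section 2.4. Thus the genuine content is simply isolating the finitely many ``bad'' generators and noting that a $C$-dependent bound on their $\d_{C,Y}$-length is all that is needed.
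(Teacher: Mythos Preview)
Your proof is correct and follows the same approach as the paper: show that for each fixed $C$ the identity map $(G,\d_{C,X})\to(G,\d_{C,Y})$ is Lipschitz (hence, by symmetry, bi-Lipschitz), and deduce the equivalence of the two incompressibility conditions. Your treatment is in fact more careful than the paper's one-line sketch; the paper asserts the Lipschitz constant is $\max_{x\in X}|x|_Y$, which is only literally correct when every $x\in X\setminus Y$ lies in $\langle Y\rangle$, whereas your argument correctly allows the constant to depend on $C$ to handle the case where an $x\in X\setminus Y$ must be expressed using elements of the $H_\lambda$.
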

\begin{proof}
It is easy to see using the definition of the induced metric that for every collection $C=\{ \d_{H_\lambda}\}_{\lambda \in \Lambda}$ of left invariant metrics $\d_{H_\lambda} \in \mathcal M(H_\lambda)$, the identity map $(G, \d_{C,X})\to (G,\d_{C,Y})$ is Lipschitz with the Lipschitz constant $\max_{x\in X}\{ |x|_Y\}$; the maximum exists since $X\triangle Y$ is finite.
\end{proof}

In particular, Lemma \ref{XYinc} holds true if both $X$ and $Y$ are finite.

The main result of this section is the following (Theorem \ref{main3} is clearly a particular case of it).

\begin{thm}\label{EP<->inc}
Let $G$ be a group, $\Hi$ a collection of subgroups of $G$. Suppose that $G$ is finitely generated relative to $\Hi$. Then the following conditions are equivalent.
\begin{enumerate}
\item[(a)] The extension problem for $\Hi$ and $G$ is solvable.
\item[(b)] $\Hi$ is incompressible in $G$.
\item[(c)] For every $A\in \AHi$, $Ind(A)$ is an extension of $A$.
\end{enumerate}
\end{thm}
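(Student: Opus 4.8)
The plan is to prove the equivalence via the cycle (a) $\Rightarrow$ (b) $\Rightarrow$ (c) $\Rightarrow$ (a), using the induced metric $\d_{C,X}$ as the bridge between the purely metric condition (b) and the two action-theoretic conditions. Throughout, fix a finite relative generating set $X$ of $G$ with respect to $\Hi$.

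For (a) $\Rightarrow$ (b): given an arbitrary collection $C = \{\d_{H_i}\}$ of metrics $\d_{H_i}\in\mathcal M(H_i)$, consider the collection of left actions $H_i \curvearrowright (H_i,\d_{H_i})$. By solvability there is a $G$-action $G\curvearrowright S$ extending all of them simultaneously, with coarsely $H_i$-equivariant quasi-isometric embeddings $f_i\colon (H_i,\d_{H_i})\to S$. Picking a basepoint $s\in S$, the argument of Lemma~\ref{fgH} (applied inside $G$, which is generated by $X$ together with the $H_i$) shows that each $x\in X$ moves $s$ a bounded amount and each $h\in H_i$ moves $s$ at most $\d_{H_i}(1,h)+O(1)$; composing along a $\d_{C,X}$-geodesic decomposition of $g\in G$ gives $\d_S(s,gs)\le K\,\d_{C,X}(1,g)+K$. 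Combined with the lower quasi-isometry bound from $f_i$ this yields $\d_{H_i}(1,h)\le C'\d_{C,X}(1,h)+C'$ for all $h\in H_i$; the reverse inequality $\d_{C,X}\vert_{H_i}\preceq\d_{H_i}$ is immediate from the definition of the induced metric. Hence the inclusion $(H_i,\d_{H_i})\to(G,\d_{C,X})$ is a quasi-isometric embedding, so $\Hi$ is incompressible.

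For (b) $\Rightarrow$ (c): let $A=([H_i\curvearrowright R_i])_i\in\AHi$; by Proposition~\ref{graph} we may take each $R_i$ a graph with free $H_i$-action on vertices, and by the \v{S}varc--Milnor-type reasoning we may replace $R_i$ by $(H_i,\d_{H_i})$ for a suitable $\d_{H_i}\in\mathcal M(H_i)$ (realizing $\d_{H_i}(1,h)\asymp\d_{R_i}(b_i,hb_i)$ for a base vertex $b_i$). Example~\ref{indGS} identifies the relevant induced metric: the space of $\Ind_X(A)$ is, up to equivalence, $(G,\d_{C,X})$ with $C=\{\d_{H_i}\}$. The inclusion $H_i\hookrightarrow G$ composed with the coset-graph gluing is precisely the natural map $(H_i,\d_{H_i})\to(G,\d_{C,X})$, which is coarsely $H_i$-equivariant (in fact $H_i$-equivariant on the glued copy of $R_i$) and, by incompressibility, a quasi-isometric embedding. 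Therefore $\Ind_X(A)$ is an extension of each $A_i$, i.e. of $A$.

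For (c) $\Rightarrow$ (a): this is essentially tautological once one unwinds definitions — if $\Ind(A)$ is an extension of $A$ for every $A\in\AHi$, then for any given collection of actions $\{H_i\curvearrowright R_i\}$ one forms $A=([H_i\curvearrowright R_i])_i$, and the single $G$-action $\Ind(A)$ extends the whole collection at once, so the extension problem for $\Hi$ and $G$ is solvable. The main obstacle in the whole argument is the implication (b) $\Rightarrow$ (c), specifically the careful verification that the space of the induced action is quasi-isometric (equivariantly) to $(G,\d_{C,X})$ rather than to some larger space: one must check that the copies of $R_i$ glued onto the cosets do not create shortcuts, which is where the restriction to a \emph{finite} relative generating set $X$ and the independence statement of Proposition~\ref{prop-ind} are used. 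The implication (a) $\Rightarrow$ (b) has a secondary subtlety: one must extend \emph{all} metrics $\d_{H_i}$ simultaneously, but this is exactly what Definition~\ref{defext} of an extension of a collection of actions provides, and the intersection-compatibility obstruction of Remark~\ref{onesubgr} is automatically satisfied here because all the actions in question are the respective left-multiplication actions.
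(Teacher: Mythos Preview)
Your overall cycle (a)$\Rightarrow$(b)$\Rightarrow$(c)$\Rightarrow$(a) and your argument for (a)$\Rightarrow$(b) match the paper's Lemma~\ref{EP->inc} essentially step for step: extend the left actions $H_i\curvearrowright(H_i,\d_{H_i})$, push the basepoint along a $\d_{C,X}$-geodesic decomposition to see that the orbit map $(G,\d_{C,X})\to S$ is Lipschitz, and conclude via the lower QI bound of the $f_i$.

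The gap is in (b)$\Rightarrow$(c). Your key reduction ``by \v{S}varc--Milnor-type reasoning replace $R_i$ by $(H_i,\d_{H_i})$'' and the subsequent claim that the space $S$ of $\Ind_X(A)$ is, up to equivalence, $(G,\d_{C,X})$, are only valid when every $H_i\curvearrowright R_i$ is \emph{cobounded}. In general the $G$-orbit of the basepoint in $S$ meets each copy $\{H_i\}\times R_i$ exactly in the $H_i$-orbit of $b_i$, so if that orbit is not coarsely dense in $R_i$ the map $(G,\d_{C,X})\to S$ is an isometric embedding but not a quasi-isometry. Consequently you have only verified that the orbit map $(H_i,\d_{H_i})\to S$ is a quasi-isometric embedding, not that the required map $\psi_i\colon R_i\to S$, $r\mapsto(H_i,r)$, is one; points of $R_i$ far from $H_ib_i$ are not covered. (There is no way to reduce to the cobounded case: an action like $\mathbb Z\curvearrowright\mathbb R^2$ by horizontal translation is not equivalent to any cobounded action, so Proposition~\ref{prop-ind} does not help.) The ``shortcut'' issue you flag is the easy direction; the missing direction is the opposite one, that $S$ may be strictly larger than $(G,\d_{C,X})$.

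The paper's Lemma~\ref{inc->ext} handles this directly: it first proves your implicit claim $\d_{C,X}(g_1,g_2)\le\d_S(g_1(H_i,b_i),g_2(H_i,b_i))$ on the $G$-orbit, and then, given arbitrary $r_1,r_2\in R_i$, takes a geodesic in $S$ from $(H_i,r_1)$ to $(H_i,r_2)$, records the first and last vertices $g_1(H_i,b_i),g_2(H_i,b_i)$ where it meets the $G$-orbit (necessarily $g_1,g_2\in H_i$), and splits the estimate into two within-$R_i$ pieces plus the middle piece controlled by incompressibility. This extra splitting is exactly what your sketch is missing.

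One minor point on (c)$\Rightarrow$(a): it is not quite tautological, because the extension problem is posed for actions on arbitrary metric spaces while $\AHi$ consists of equivalence classes of actions on \emph{geodesic} spaces; the paper invokes Proposition~\ref{geo} to bridge this.
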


Using this and Proposition \ref{undist} we see that incompressible subgroups are finitely generated and undistorted. Example \ref{vfree} shows that the converse fails.

We break the proof of Theorem \ref{EP<->inc} into two lemmas.

\begin{lem}\label{EP->inc}
Let $G$ be a group, $\Hi$ a collection of subgroups of $G$. Suppose that $G$ is finitely generated modulo $\Hi$ and the extension problem for $\Hi$ and $G$ is solvable. Then $\Hi$ is incompressible in $G$.
\end{lem}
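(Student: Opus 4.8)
The plan is to fix a finite relative generating set $X$ of $G$ modulo $\Hi$ and an arbitrary collection of left invariant metrics $C=\{\d_{H_i}\}_{i=1}^n$ with $\d_{H_i}\in\mathcal M(H_i)$, form the induced metric $\d_{C,X}$ on $G$, and show that each inclusion $(H_i,\d_{H_i})\to(G,\d_{C,X})$ is a quasi-isometric embedding. Since $\d_{C,X}\vert_{H_i}\preceq\d_{H_i}$ by construction (the induced metric is the greatest element of the set \eqref{subM}), one inequality is free: $\d_{C,X}(1,h)\le\d_{H_i}(1,h)$ for all $h\in H_i$. So the entire content is the reverse inequality: there is a constant $K$ with $\d_{H_i}(1,h)\le K\,\d_{C,X}(1,h)$ for all $h\in H_i$.

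To get this, I would feed the hypothesis that the extension problem is solvable the \emph{right} action of $H_i$ to extend. Consider the action of $H_i$ on the metric space $(H_i,\d_{H_i})$ by left translations; this is an isometric action, and by Remark \ref{onesubgr} solvability of the extension problem for $\Hi$ gives, in particular, an extension $G\curvearrowright S_i$ of $H_i\curvearrowright(H_i,\d_{H_i})$ for each $i$ — in fact one wants a single $G$--action extending the whole collection $\{H_i\curvearrowright(H_i,\d_{H_i})\}_{i=1}^n$ simultaneously, which is exactly what solvability for the collection $\Hi$ provides. Let $f_i\colon(H_i,\d_{H_i})\to S$ be the coarsely $H_i$--equivariant quasi-isometric embeddings into this common space $S$, and fix a basepoint $s\in S$. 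The strategy is now to bound $\d_S(s,gs)$ for $g\in G$ linearly in $\d_{C,X}(1,g)$, and then, when $g=h\in H_i$, to run this back through $f_i$ being a quasi-isometric embedding to recover $\d_{H_i}(1,h)$.

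The key estimate is a "relative" version of Lemma \ref{OML}: I claim there is a constant $M$ (depending on $s$, the finite set $X$, and the maps $f_i$, but not on $g$) such that $\d_S(s,gs)\le M\,\d_{C,X}(1,g)$ for all $g\in G$. To see this, take a geodesic decomposition $g=x_1\cdots x_k$ realizing $\d_{C,X}(1,g)=\sum_j w_{C,X}(x_j)$; each factor $x_j$ is either an element of $X$ (contributing $1$ to the sum) or an element of some $H_{i(j)}$ with $w_{C,X}(x_j)=\d_{H_{i(j)}}(1,x_j)$. Write $\d_S(s,gs)\le\sum_j\d_S(g_{j-1}s,g_{j-1}x_js)=\sum_j\d_S(s,x_js)$ using $g_{j}=x_1\cdots x_j$ and isometry of the $G$--action. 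For an $X$--factor, $\d_S(s,x_js)\le\max_{x\in X}\d_S(s,xs)$, which is finite since $X$ is finite. For an $H_i$--factor $h=x_j$, I estimate $\d_S(s,hs)$ via $f_i$: $\d_S(s,hs)\le\d_S(s,f_i(1_{H_i}))+\d_S(f_i(1),hf_i(1))+\d_S(hf_i(1),s)$, then coarse equivariance replaces $hf_i(1)$ by $f_i(h)$ up to a bounded error, and then the quasi-isometric embedding bound gives $\d_S(f_i(1),f_i(h))\le C'\d_{H_i}(1,h)+C'$; altogether $\d_S(s,hs)\le M'\big(\d_{H_i}(1,h)+1\big)=M'\big(w_{C,X}(h)+1\big)$ for a uniform $M'$ (taking the max of the constants over $i=1,\dots,n$, which is legitimate as the collection is finite). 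Since $w_{C,X}(x_j)\ge 1$ for every factor, each term is $\le 2M'\,w_{C,X}(x_j)$, and summing gives $\d_S(s,gs)\le 2M'\,\d_{C,X}(1,g)$, proving the claim with $M=2M'$. Finally, specializing $g=h\in H_i$ and using that $f_i$ is a quasi-isometric embedding in the reverse direction, $\d_{H_i}(1,h)\le C'\d_S(f_i(1),f_i(h))+C'\le C'\big(\d_S(f_i(1),hf_i(1))+D\big)+C'\le C'\big(\d_S(s,hs)+2\d_S(s,f_i(1))+D+1\big)\le K\,\d_{C,X}(1,h)$ for a suitable $K$, again using $\d_{C,X}(1,h)\ge 1$ for $h\ne1$ to absorb additive constants. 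This is the required quasi-isometric embedding, so $\Hi$ is incompressible.

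The step I expect to be the main obstacle is the relative Lemma \ref{OML} estimate — specifically, being careful that the constant $M'$ controlling $\d_S(s,hs)$ in terms of $\d_{H_i}(1,h)$ is \emph{uniform over all $h\in H_i$ and all $i$}, which requires combining the coarse-equivariance bound (a single $\sup$ over $H_i$, finite by hypothesis) with the quasi-isometry constant of $f_i$ and then taking a maximum over the finitely many indices $i$. Everything else is a routine telescoping/triangle-inequality bookkeeping exercise; the only genuinely essential hypotheses being used are the \emph{finiteness} of $X$ and of the collection $\Hi$ (so maxima exist) and the defining inequality $\d_{C,X}\vert_{H_i}\preceq\d_{H_i}$ from the construction of the induced metric.
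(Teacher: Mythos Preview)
Your proposal is correct and follows essentially the same route as the paper's proof: both take a common extension $G\curvearrowright S$ of the collection $\{H_i\curvearrowright(H_i,\d_{H_i})\}$, prove that the orbit map $(G,\d_{C,X})\to S$ is Lipschitz by breaking a geodesic decomposition into $X$--factors and $H_i$--factors (bounding the latter via the coarsely equivariant quasi-isometric embeddings $f_i$ and using $w_{C,X}\ge 1$ on nontrivial factors to absorb additive constants), and then read off the reverse inequality $\d_{H_i}(1,h)\lesssim\d_{C,X}(1,h)$ from the quasi-isometric embedding $f_i$. The only cosmetic difference is that the paper packages the final step as the observation ``if a composition of two Lipschitz maps is a quasi-isometric embedding, then so is each factor,'' applied to $\mathcal O^s_{H_i}=\mathcal O^s_G\circ\phi_i$, whereas you unwind this directly.
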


\begin{proof}
Given a collection $C=\{ \d_{H_1}, \ldots, \d_{H_n}\}$ of metrics $\d_{H_i}\in \mathcal M(H_i)$, let $d_{C,X}$ be the metric on $G$ induced by $C$ and a finite relative generating set $X$ (see Definition \ref{IMG}).  For each $i$, let $\phi_i\colon (H_i,\d_{H_i})\to (G,\d_{C,X})$ be the map induced by inclusion. Our goal is to prove that it is a quasi-isometric embedding. It is clear that $\phi_i$ is Lipschitz. Thus we only need to prove that $\phi _i$ cannot ``compress points too much," i.e., that it satisfies the left inequality in (\ref{defqi}).

Our proof will be based on the following observation, which is straightforward to verify using the definitions: if a composition of two Lipschitz maps is a quasi-isometric embedding, then each of these maps is a quasi-isometric embedding.

Let $G\curvearrowright S$ be an extension of $\{H_1\curvearrowright (H_1, \d_{H_1}), \dots, H_n\curvearrowright (H_n, \d_{H_n})\}$. By the definition of an extension, for every $i$ there is a coarsely $H_i$-equivariant quasi-isometric embedding $\beta_i\colon (H_i,\d_{H_i})\to S$. We fix $s\in S$. Without loss of generality we can assume that $\beta_i(1)=s$ for all $i$.

Assume that each $\beta_i$ satisfies the definition of a quasi-isometric embedding (\ref{defqi}) and the property of being coarsely $H_i$-equivariant (\ref{defce}) with a constant $K$. Since $$\sup_{h\in H_i}\{\d_S(hs,\beta_i(h))\}=\sup_{h\in H_i}\{\d_S(h\beta(1),\beta_i(h1))\}\le K,$$ the orbit map $\mathcal O^s_{H_i}\colon (H_i,\d_{H_i})\to (H_is,\d_S)$ is also a quasi-isometric embedding. It is clear that $$\mathcal O^s_{H_i}= \mathcal O^s_{G}\circ\phi_i.$$ As we remarked above, it suffices to show that $\mathcal O^s_{G}$ is Lipschitz.

Let $f,g\in G$ and let $f_1\dots f_k$ be a geodesic decomposition of $f^{-1}g$; that is, $$f_1, \ldots, f_k\in \left(X\cup\bigcup\limits_{i=1}^n H_{i}\right)\setminus\{1\} \quad  \textrm{and} \quad
\d_{C,X}(f,g)=\sum\limits_{j=1}^k w_{C,X}(f_j).
$$
If $f_j\in H_i$ for some $i$ and $w_{C,X}(f_j)=\d_{H_i}(1, f_j)$, then
$$
\d_S(s,f_js)=\d_S (\beta_i(1), \beta_i(f_j))+ \d_S(\beta_i(f_j), f_j\beta_i(1))\leq  K \d_{H_i}(1,f_j) +2K \le  Kw_{C,X}(f_j)+2K.
$$
As $f_j\neq 1$, it follows that $w_{C,X}(f_j)\geq 1$, and thus $\d_S(s,f_js)\leq 3K w_{C,X}(f_j)$.
Further, if $f_j\in X$, then $\d_S(s,f_js)\leq M=\max_{x\in X} \d_S(s, xs)$. Hence, we have
$$\d_S(fs,gs)\leq \sum_{j=1}^k\d_S(s,f_js)\leq \sum_{j=1}^k (3K+M) w_{C,X}(f_j) = (3K+M)\d_{C,X}(f,g).$$
Thus, $\mathcal O^s_G$ is Lipschitz, as required.
\end{proof}

\begin{lem}\label{inc->ext}
Let $G$ be a group, $\Hi$ a collection of subgroups of $G$, $X$ a (not necessarily finite) generating set of $G$ modulo $\Hi$. Suppose that $\Hi $ is incompressible in $G$ with respect to $X$. Then for every  $A\in \mathcal A(H_1)\times\cdots\times \mathcal A(H_n)$, $\Ind_X(A)$ is an extension of $A$.
\end{lem}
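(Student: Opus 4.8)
The plan is to produce, for each $i$, an explicit coarsely $H_i$-equivariant quasi-isometric embedding $R_i\to S:=S_{X,\mathcal T,\mathcal B,\mathcal A}$. By Proposition~\ref{graph} we may choose the representatives so that each $R_i$ is a \emph{connected} graph on which $H_i$ acts freely on vertices, and we take any transversal $\mathcal T$ and base vertices $\mathcal B=\{b_1,\dots,b_n\}$. The candidate map is the tautological inclusion $\iota_i\colon R_i\cong\{H_i\}\times R_i\hookrightarrow S$. Condition $(\mathbf A_1)$ of Lemma~\ref{act} gives $h(H_i,r)=(H_i,hr)$, so $\iota_i$ is literally $H_i$-equivariant, and since $\{H_i\}\times R_i$ is a subgraph of $S$ we get $\d_S(\iota_i(u),\iota_i(v))\le\d_{R_i}(u,v)$ for all $u,v$ automatically. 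Hence the whole content is the reverse bound $\d_{R_i}(u,v)\le C'\d_S(u,v)+C'$ for vertices $u,v\in V(R_i)$, and incompressibility is what supplies it.

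The point is to feed the right data into the incompressibility hypothesis. Under the gluing, the $H_i$-orbit $H_i\cdot b_i\subseteq V(R_i)$ is identified with the coset $H_i\subseteq G=V(\Gamma(G,X))\subseteq S$ via $h b_i\leftrightarrow h$. Define $\d_{H_i}\in\mathcal M(H_i)$ by $\d_{H_i}(h,k)=\d_{R_i}(h b_i,k b_i)$; this is a genuine integer-valued left invariant metric precisely because $R_i$ is connected and $H_i$ acts freely on $V(R_i)$. Put $C=\{\d_{H_1},\dots,\d_{H_n}\}$ and let $\d_{C,X}$ be the induced metric. I would first check that the orbit map $\mathcal O\colon(G,\d_{C,X})\to S$, $g\mapsto g$ (the vertex $g\cdot 1$ of $\Gamma(G,X)$), is an \emph{isometric} embedding. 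For ``$\le$'': take a $w_{C,X}$-geodesic decomposition $g=f_1\cdots f_k$ and observe that an $X$-letter contributes an edge of $\Gamma(G,X)$, while an $H_\lambda$-letter $f_j$ realizing $w_{C,X}(f_j)=\d_{H_\lambda}(1,f_j)$ contributes a path of length $\d_{R_\lambda}(b_\lambda,f_j b_\lambda)$ inside $\{H_\lambda\}\times R_\lambda$. For ``$\ge$'': given a path $p$ in $S$ from $1$ to $g$, cut it at its successive visits $1=e_0,e_1,\dots,e_m=g$ to $V(\Gamma(G,X))$; since $V(S)=V(\Gamma(G,X))\sqcup\bigsqcup_{k}\bigsqcup_{aH_k\in G/H_k}\bigl(\{aH_k\}\times(V(R_k)\setminus H_k\cdot b_k)\bigr)$, each piece of $p$ between $e_j$ and $e_{j+1}$ is either a single edge of $\Gamma(G,X)$ (whence $e_j^{-1}e_{j+1}\in X$) or an excursion into one copy $\{aH_k\}\times R_k$ with $e_j,e_{j+1}$ in the coset $aH_k$ (whence $e_j^{-1}e_{j+1}\in H_k$ has $\d_{H_k}(1,\cdot)$ at most the length of that piece); summing the resulting weights gives $\d_{C,X}(1,g)\le\ell(p)$.

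Granting this, incompressibility of $\Hi$ with respect to $X$, applied to our $C$, says that $(H_i,\d_{H_i})\to(G,\d_{C,X})$ is a quasi-isometric embedding, and composing with the isometry $\mathcal O$ shows that $H_i\cdot b_i\hookrightarrow S$, with $H_i\cdot b_i$ carrying the metric $\d_{R_i}$, is a quasi-isometric embedding: $\d_{R_i}(u',v')\le C'\d_S(u',v')+C'$ for $u',v'\in H_i\cdot b_i$. To finish, take arbitrary $u,v\in V(R_i)$ and a geodesic $p$ in $S$ from $u$ to $v$. If $p$ never leaves $\{H_i\}\times R_i$ then $\d_S(u,v)=\d_{R_i}(u,v)$. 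Otherwise $p$ meets $H_i\cdot b_i$; letting $u'$ and $v'$ be its first and last such vertices, the subpaths $p|_{[u,u']}$ and $p|_{[v',v]}$ are forced to stay inside $\{H_i\}\times R_i$ (leaving that subgraph requires crossing $H_i\cdot b_i$), so $\d_S(u,v)\ge\d_{R_i}(u,u')+\d_S(u',v')+\d_{R_i}(v',v)$; combining with the previous estimate for $\d_S(u',v')$ and the triangle inequality $\d_{R_i}(u,v)\le\d_{R_i}(u,u')+\d_{R_i}(u',v')+\d_{R_i}(v',v)$ gives $\d_{R_i}(u,v)\le\max\{1,C'\}\,\d_S(u,v)+C'$. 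Passing from $V(R_i)$ to $R_i$ is routine since $R_i$ is quasi-isometric to its vertex set. I expect the main obstacle to be the two ``cutting'' arguments above — i.e.\ the careful bookkeeping about how geodesics of $S$ interact with the glued copies $\{aH_k\}\times R_k$; the only input beyond the construction of $S$ is incompressibility, and it is used solely through the tailored collection $C$.
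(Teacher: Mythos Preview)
Your proof is correct and follows essentially the same route as the paper: define $\d_{H_i}$ via the orbit of $b_i$, show the orbit map $(G,\d_{C,X})\to S$ is $1$--Lipschitz in the relevant direction by cutting an $S$--geodesic at its intersections with $V(\Gamma(G,X))$, apply incompressibility, and then extend from $H_i\cdot b_i$ to all of $V(R_i)$ via the first/last--hit argument. The only cosmetic difference is that you check both inequalities for $\mathcal O$ (the paper proves only $\d_{C,X}\le \d_S$, noting that equality holds but is not needed).
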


\begin{proof}
Let $A=(A_1, \ldots, A_n)$. By Proposition \ref{graph}, we can choose  $H_i\curvearrowright R_i\in A_i$ for each $i$ so that $R_i$ is a graph and the action of $H_i$ restricted to the vertex set of $R_i$ is free. We fix a transversal $\mathcal T=\{ t_1, \ldots, t_n\}$ and a collection of basepoints $\mathcal B=\{b_1,\ldots,b_n\}$ as in Section 3.3. Let $S=S_{X, \mathcal T, \mathcal B, \mathcal A}$, where $\mathcal A=\{ H_1\curvearrowright R_1, \ldots, H_n\curvearrowright R_n\}$, be the space of the induced action (see Definition \ref{def-ia}).

By Proposition \ref{AABB}, it suffices to prove that, for each $i$, the natural inclusion
\[
\psi_i: V(R_i) \to V(S) \quad \textrm{given by} \quad \psi_i(r)=(H_i,r)
\]
is an $H_i$-equivariant quasi-isometry. The $H_i$-equivariance follows immediately from condition ({\bf A}$_1$) in the definition of the induced action. Note that $\psi_i$ extends to an embedding of $R_i$ as a subgraph of $S$, so each $\psi_i$ is $1$-Lipschitz. Thus we only need to prove that $\psi _i$ satisfies the left inequality in the definition of a quasi-isometric embedding (\ref{defqi}).

We define left invariant metrics $\d_{H_i}$ on subgroups $H_i$ by
\begin{equation}\label{dHidef}
\d_{H_i}(g_1,g_2)=\d_{R_i}(g_1b_i,g_2 b_i)
\end{equation}
for all $g_1, g_2\in H_i$, and set $C=\{\d_{H_1},\ldots,\d_{H_n}\}$.  Since $\Hi$ is incompressible, there exists a constant $D\geq 1$ such that
\begin{equation}\label{dHidCX}
 \d_{H_i}(g_1,g_2) \leq D\d_{C,X}(g_1,g_2)
\end{equation}
for all $g_1, g_2\in H_i$.

We claim that
\begin{equation}\label{dCXdS}
\d_{C,X}(g_1, g_2) \le \d_S(g_1(H_i,b_i),g_2(H_i, b_i))
\end{equation}
for any $g_1, g_2\in G$. (In fact, this is an equality, but the inequality is sufficient for our goal.)

Indeed let $p$ be a geodesic path in the graph $S$ connecting $g_1(H_i,b_i)$ to $g_2(H_i,b_i)$. Let $v_1=g_1(H_i,b_i), v_2, \ldots, v_{k+1}=g_2(H_i,b_i)$ be consecutive vertices of $p$ that belong to $G(H_i,b_i)$, the set of vertices of $\Gamma (G,X)$ considered as a subgraph of $S$. That is, for every $j=1, \ldots, k+1$, we have $v_j=u_j(H_i, b_i)$ for some $u_j\in G$ (recall that these are independent of the choice of $i$, see (\ref{ec})), where $u_1=g_1$ and $u_{k+1}=g_2$. Let $f_j=u_j^{-1}u_{j+1}$. For every $j=1, \ldots, k$, the vertices $v_j, v_{j+1}$ either span an edge of $\Gamma (G,X)$ or simultaneously belong to some $$Z_{j}=\{ u_j H_{i(j)}\} \times R_{i(j)}=\{ u_{j+1} H_{i(j)}\} \times R_{i(j)}.$$ In the former case, we have $f_j\in X$ and therefore
$$
w_{C,X} (f_j)=1=\d_S (v_j,v_{j+1}).
$$
In the latter case, we have $f_j\in H_{i(j)}$ and $v_{j+1}=u_jf_j(H_{i(j)},b_{i(j}))=u_j(H_{i(j)},f_jb_{i(j}))$ by property ($\mathbf A_1$) (see Lemma \ref{act}). Hence
\begin{align*}
w_{C,X} (f_j)& \le \d_{H_{i(j)}}(1, f_j)=\d _{R_{i(j)}}(b_{i(j)}, f_jb_{i(j)})
=\d _{Y_{i(j)}}((H_{i(j)}, b_{i(j)}), (H_{i(j)},f_jb_{i(j)}))
\\ &
= \d _{Y_{i(j)}}(u_j(H_{i(j)}, b_{i(j)}), u_j(H_{i(j)},f_jb_{i(j)}))=\d _{Y_{i(j)}}(v_j,v_{j+1})\le \d_S (v_j, v_{j+1}),
\end{align*}
where $Y_j$ is defined as in Section 3.3, see (\ref{Yi}).
Therefore,
\begin{align*}
\d_{C,X} (g_1, g_2)& = \d_{C,X} (g_1, g_1f_1\cdots f_k) \le \sum\limits_{j=1}^k w_{C,X} (f_j)
\le \sum\limits_{j=1}^k \d_S (v_j, v_{j+1})
\\&
=\d_S(g_1(H_i,b_i),g_2(H_i, b_i)).
\end{align*}
This finishes the proof of (\ref{dCXdS}).

Now let $i$ be fixed and let $r_1,r_2$ be vertices of $R_i$. Let $q$ be a geodesic path from $(H_i,r_1)$ to $(H_i,r_2)$ in $S$. If this path does not intersect $G(H_i,b_i)$, then $\d_{R_i}(r_1,r_2)=\d_S(\psi_i(r_1),\psi_i(r_2))$ and we are done. Otherwise, let $g_1(H_i,b_i)$ and $g_2(H_i,b_i)$ be the first and last vertices in $q\cap G(H_i,b_i)$. Clearly $g_1, g_2\in H_i$. Using  (\ref{dCXdS}), (\ref{dHidCX}), and (\ref{dHidef}) we obtain
\begin{align*}
 \d_S(\psi_i(r_1),\psi_i(r_2))
 &
  = \d_S((H_i,r_1),(H_i,g_1b_i)) + \d_S((H_i,g_1b_i),(H_i,g_2b_i)) + \d_S((H_i,g_2b_i),(H_i,r_2))
 \\
 &
  \geq \d_{R_i}(r_1,g_1b_i) + \d_{C,X}(g_1, g_2) + \d_{R_i}(g_2b_i,r_2)
  \\
 &
  \geq \d_{R_i}(r_1,g_1b_i) + D^{-1} \d_{H_i}(g_1, g_2) + \d_{R_i}(g_2b_i,r_2)
  \\
 &
  = \d_{R_i}(r_1,g_1b_i) + D^{-1} \d_{R_i}(g_1b_i, g_2b_i) + \d_{R_i}(g_2b_i,r_2)
 \\
 &
  \geq D^{-1} \d_{R_i}(r_1,r_2). \qedhere
\end{align*}
\end{proof}

\begin{proof}[Proof of Theorem \ref{EP<->inc}]
That (a) implies (b) follows from Lemma \ref{EP->inc}. Further (b) implies (c) by Lemma \ref{inc->ext}. Finally, (c) implies (a) by Proposition \ref{geo}.
\end{proof}

%%%%%%%%%%%%%%%%%%%%%%%%%%%%%%%%%%%%%%%%%%%%%%%%%%%%%%%%%%%%%%%%%%

\section{Proofs of the main results}

%%%%%%%%%%%%%%%%%%%%%%%%%%%%%%%%%%%%%%%%%%%%%%%%%%%%%%%%%%%%%%%%%%

\subsection{The extension problem for elementary amenable groups}

Recall that the class of amenable groups is closed under the following four
operations:

\begin{enumerate}
\item[(S)] Taking subgroups.

\item[(Q)] Taking quotient groups.

\item[(E)] Group extensions.

\item[(U)] Directed unions.
\end{enumerate}

As in \cite{Day}, let  $EG$ denote the class of {\it elementary
amenable groups}, that is, the smallest class  which contains all
abelian and finite groups and is closed under the operations
(S)--(U). In particular, $EG$ contains all solvable groups.

To prove Theorem \ref{main1}  we will need several elementary facts. The first one is well-known; we provide a brief outline of the proof for convenience of the reader.

\begin{lem}\label{vab}
Every subgroup of a finitely generated virtually abelian group is undistorted. That is, if $G$ is a virtually abelian group generated by a finite set $X$ and $H\le G$ is generated by a finite set $Y$, then the natural map $(H, \d_Y)\to (G, \d_X)$ is a quasi-isometric embedding.
\end{lem}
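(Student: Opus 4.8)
The plan is to reduce the statement to the case of a finitely generated \emph{free abelian} normal subgroup of finite index, and then handle that case by a direct coordinate computation. First I would invoke the structure theory of finitely generated virtually abelian groups: $G$ contains a free abelian subgroup $A\cong\mathbb Z^m$ of finite index, and after replacing $A$ by its normal core we may assume $A\trianglelefteq G$. Distortion of subgroups is a quasi-isometry invariant and is also well-behaved under commensurability (if $K\le G$ has finite index, then $H$ is undistorted in $G$ iff $H\cap K$ is undistorted in $K$, since finite-index subgroups are undistorted and sit inside one another coarsely isometrically). So it suffices to prove the claim when $G=A\cong\mathbb Z^m$ is free abelian of finite rank and $H\le A$ is an arbitrary subgroup.

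For the abelian case, I would use that any subgroup $H$ of $\mathbb Z^m$ is itself free abelian, say $H\cong\mathbb Z^k$, and—after a change of basis of $\mathbb Z^m$ realized by an element of $GL_m(\mathbb Z)$, which is a quasi-isometry of $(G,\d_X)$—one can take $H$ to be generated by $n_1e_1,\dots,n_ke_k$ for positive integers $n_i$ and the standard basis vectors $e_i$. Equip $G=\mathbb Z^m$ with the $\ell^1$-type word metric coming from the standard generating set (this is bi-Lipschitz to $\d_X$ for any finite $X$, hence harmless), and equip $H$ with the word metric from the generators $n_ie_i$, which is bi-Lipschitz to the $\ell^1$ metric on the $\mathbb Z^k$ of coordinates. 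Then for $h=\sum_{i=1}^k a_i n_i e_i$ one computes directly $|h|_X \asymp \sum_i n_i|a_i|$ while $|h|_Y\asymp\sum_i|a_i|$, and since the $n_i$ are fixed positive constants the two are comparable up to the multiplicative constant $\max_i n_i$. This gives the required two-sided linear bound, i.e. the inclusion is a quasi-isometric embedding.

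The only mildly delicate point—and the step I expect to be the main obstacle in writing it carefully rather than mathematically—is the commensurability reduction: one must check that passing from $G$ to a finite-index $A$ and from $H$ to $H\cap A$ does not lose control of the quasi-isometry constants, and that the ``undistorted'' property transfers in both directions. This is standard (finite-index subgroups are coarsely dense and the inclusion is a quasi-isometry by the \v{S}varc--Milnor lemma, and $H\cap A$ has finite index in $H$), but it requires a short lemma-style argument. Everything after that reduction is routine linear algebra over $\mathbb Z$, so I would state the reduction cleanly, cite or quickly prove the commensurability fact, and then do the $\mathbb Z^m$ computation explicitly as above.
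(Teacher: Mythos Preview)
Your proposal is correct and follows essentially the same approach as the paper: reduce via commensurability to the case $H\le A\cong\mathbb Z^m$, then handle that case. The only difference is cosmetic---the paper phrases the free abelian step as ``every subgroup of $\mathbb Z^m$ is a retract of a finite-index subgroup of $\mathbb Z^m$'' (and retracts and finite-index subgroups are undistorted), which is precisely what your Smith-normal-form computation exhibits explicitly.
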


\begin{proof}
Let $A\le G$ be a free abelian subgroup of finite index in $G$. It is easy to show that $H$ is undistorted in $G$ if (and only if) $H\cap A$ is undistorted in $A$. The latter result follows from the well-known (and easy to prove) facts that every subgroup of a finitely generated free abelian group $A$ is a retract of a finite index subgroup of $A$ and that retracts and finite index subgroups of finitely generated groups are undistorted.
\end{proof}

The following lemma can be found in \cite{Osi01}.

\begin{lem}\label{EG1}
Let $G$ be an elementary amenable group such that every subgroup of $G$ is finitely generated. Then $G$ is virtually polycyclic.
\end{lem}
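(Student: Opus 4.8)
The plan is to argue by transfinite induction on Chou's elementary amenable hierarchy. Following Chou, set $\mathfrak{X}_0$ to be the class of all finite groups together with all abelian groups; given $\mathfrak{X}_\alpha$, let $\mathfrak{X}_{\alpha+1}$ consist of all groups that are extensions of a group in $\mathfrak{X}_\alpha$ by a group in $\mathfrak{X}_\alpha$, together with all directed unions of groups from $\mathfrak{X}_\alpha$; and for a limit ordinal $\lambda$ put $\mathfrak{X}_\lambda=\bigcup_{\alpha<\lambda}\mathfrak{X}_\alpha$. Then $EG=\bigcup_\alpha\mathfrak{X}_\alpha$. I would prove, by transfinite induction on $\alpha$, the statement: \emph{every group $G\in\mathfrak{X}_\alpha$ all of whose subgroups are finitely generated is virtually polycyclic.} Applying this to $G$ itself, which is finitely generated since it is one of its own subgroups, then yields the lemma.

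For the base case $\alpha=0$: a finite group is trivially virtually polycyclic, and an abelian group all of whose subgroups are finitely generated is in particular a finitely generated abelian group, hence polycyclic. For a limit ordinal $\lambda$ there is nothing to do, since $G\in\mathfrak{X}_\lambda$ means $G\in\mathfrak{X}_\alpha$ for some $\alpha<\lambda$, to which the inductive hypothesis applies.

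The inductive step $\alpha\to\alpha+1$ splits into two cases according to how $G$ enters $\mathfrak{X}_{\alpha+1}$. If $G=\bigcup_i G_i$ is a directed union of groups $G_i\in\mathfrak{X}_\alpha$, then, $G$ being finitely generated, a finite generating set lies in a single $G_i$ by directedness, so $G=G_i\in\mathfrak{X}_\alpha$ and the inductive hypothesis applies (all subgroups of $G$ are still finitely generated by hypothesis). If instead $1\to N\to G\to Q\to 1$ is exact with $N,Q\in\mathfrak{X}_\alpha$, then $N$, being a subgroup of $G$, has all of its subgroups finitely generated; and $Q=G/N$ has the same property, since a subgroup of $Q$ has the form $K/N$ with $N\le K\le G$, and $K$ is finitely generated as a subgroup of $G$, hence so is $K/N$. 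By the inductive hypothesis $N$ and $Q$ are therefore virtually polycyclic, and it remains to invoke the classical fact that the class of virtually polycyclic groups is closed under extensions to conclude that $G$ is virtually polycyclic, completing the induction.

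The only step that is not pure bookkeeping is this last one: an extension of a virtually polycyclic group by a virtually polycyclic group is again virtually polycyclic. This is standard --- it follows, for instance, from the facts that the polycyclic-by-finite groups are exactly the virtually solvable groups satisfying the maximal condition on subgroups, and that both of these properties are extension-closed --- and I would cite it rather than reprove it. This is where I expect the only genuine subtlety to lie; the rest of the argument is a routine induction over the hierarchy, the key observation being that the hypothesis ``every subgroup is finitely generated'' is automatically inherited by subgroups and quotients.
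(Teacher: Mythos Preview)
Your argument is correct. The paper does not actually prove this lemma: it merely records it with the sentence ``The following lemma can be found in \cite{Osi01}'' and moves on. So there is no proof in the paper to compare against; you have supplied one where the authors chose to cite.

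The proof you give---transfinite induction up Chou's hierarchy, using that the hypothesis ``every subgroup is finitely generated'' passes to subgroups and quotients, that a finitely generated directed union collapses to one of its terms, and that virtually polycyclic groups are closed under extensions---is the standard route to results of this type and is exactly what one would expect the cited reference to contain. The one place you flag as nontrivial, extension-closure of the class of virtually polycyclic groups, is indeed the only substantive input, and your justification (identifying polycyclic-by-finite with virtually solvable groups satisfying max, both properties being extension-closed) is a clean way to see it.
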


The next result is also well-known (see, for example, the proof of Proposition 4.17 in \cite{BH}).

\begin{lem}\label{EG2}
Let $G$ be a virtually polycyclic group. If every subgroup of $G$ is undistorted, then $G$ is virtually abelian.
\end{lem}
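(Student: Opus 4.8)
The plan is to reduce the statement to a question about polycyclic groups via the well-known structure theory, and then argue by induction on the Hirsch length. First I would recall the classification: a virtually polycyclic group $G$ has a finite-index normal subgroup $N$ which is polycyclic, and polycyclic groups are constructed by iterated extensions $1\to \mathbb Z^{k_i}\to N_i\to N_{i-1}\to 1$ (or with $\mathbb Z$'s) so that $N$ has a subnormal series with infinite cyclic or finitely generated abelian quotients; the Hirsch length $h(G)$ is the number of infinite cyclic factors. Since being virtually abelian is inherited by finite-index subgroups and overgroups (up to finite index), and since ``every subgroup of $G$ is undistorted'' passes to any finite-index subgroup of $G$ (by Lemma~\ref{vab} in the reverse direction, undistortion is a finite-index-insensitive notion), I would replace $G$ by $N$ and assume $G$ itself is polycyclic.

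Next I would induct on $h(G)$. The base case $h(G)=0$ means $G$ is finite, hence trivially virtually abelian. For the inductive step, pick a normal subgroup $A\cong \mathbb Z^k$ (a maximal abelian normal subgroup, or the last nontrivial term of the derived series of the Fitting subgroup, chosen normal in $G$) with $G/A$ polycyclic of smaller Hirsch length. The conjugation action gives a homomorphism $G\to \mathrm{GL}_k(\mathbb Z)$, and the key claim is that the image is finite: if some $g\in G$ acts on $A$ with infinite order in $\mathrm{GL}_k(\mathbb Z)$, then $A$ is exponentially distorted in $\langle A, g\rangle\le G$, contradicting the hypothesis that every subgroup of $G$ is undistorted. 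This is the standard Baumslag--Solitar-type distortion computation: if $g^{-1}ag = a^M$ for $\mathbb Z$-coefficients more generally, some power of $a$ under conjugation by $g^n$ grows (or shrinks) geometrically in $n$, so $|a^{(\cdot)}|_{\langle A,g\rangle}$ is logarithmic in $|a^{(\cdot)}|_A$. Once the image in $\mathrm{GL}_k(\mathbb Z)$ is finite, its kernel $G_0$ is a finite-index subgroup of $G$ that centralizes $A$; then $A\le Z(G_0)$, so $A$ is central, $G_0/A$ is polycyclic of smaller Hirsch length, and every subgroup of $G_0/A$ is undistorted (undistortion passes to quotients by a central — indeed any normal — subgroup when the subgroup is itself undistorted, using that $A$ is undistorted and central). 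By induction $G_0/A$ is virtually abelian, so $G_0$ is virtually nilpotent of class $\le 2$ with central $A$; a further short argument (or a second application of the distortion obstruction to the commutator map, which would make a class-$2$ piece distorted unless it is virtually abelian) upgrades this to virtually abelian, and hence $G$ is virtually abelian.

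The main obstacle, I expect, is the final step: passing from ``virtually nilpotent'' (or virtually class-$2$-nilpotent) back to ``virtually abelian.'' A genuinely nonabelian torsion-free nilpotent group like the discrete Heisenberg group has every subgroup finitely generated, but its center is quadratically distorted, so the undistortion hypothesis rules it out; the delicate point is doing this uniformly across all finite-index and subnormal situations so that no nonabelian nilpotent section can survive. Concretely I would isolate the statement: a finitely generated nilpotent group in which every subgroup is undistorted is virtually abelian, proved by inducting on nilpotency class and using that the last nontrivial term $\gamma_c(G)$ of the lower central series is central and, if nontrivial with $G$ nonabelian, is polynomially distorted of degree $\ge 2$ by the standard commutator-collecting estimate. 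Combining this with the Hirsch-length induction above completes the proof; alternatively, one could cite the cited proof of Proposition~4.17 in \cite{BH}, of which this lemma is essentially a restatement, but the outline above is the self-contained route.
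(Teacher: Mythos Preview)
The paper does not actually give a proof of this lemma; it simply records it as ``well-known'' and points to the proof of Proposition~4.17 in \cite{BH}. So there is no ``paper's own proof'' to compare your argument against beyond that citation.

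Your outline is broadly sound and follows the standard route, but one claim is stated incorrectly and could mislead you. You assert that if some $g$ acts on $A\cong\mathbb Z^k$ with infinite order in $\mathrm{GL}_k(\mathbb Z)$, then $A$ is \emph{exponentially} distorted in $\langle A,g\rangle$. This is only true when the action of $g$ has an eigenvalue off the unit circle. If $g$ acts unipotently (e.g.\ by $\left(\begin{smallmatrix}1&1\\0&1\end{smallmatrix}\right)$ on $\mathbb Z^2$), it still has infinite order in $\mathrm{GL}_k(\mathbb Z)$, but the distortion of $A$ is only polynomial: the group $\mathbb Z^2\rtimes\mathbb Z$ is nilpotent of class~$2$, and the commutator identity $[g^n,e_2^n]=e_1^{n^2}$ shows $\langle e_1\rangle$ is quadratically distorted. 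The \emph{conclusion} you want---that $A$ is distorted, hence the image in $\mathrm{GL}_k(\mathbb Z)$ must be finite---still holds, but your Baumslag--Solitar justification does not cover this case. You should split into the hyperbolic case (eigenvalue off the unit circle, exponential distortion) and the unipotent case (polynomial distortion via commutator collection), or simply note that any infinite-order element of $\mathrm{GL}_k(\mathbb Z)$ produces \emph{some} distortion.

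Given this, your Hirsch-length induction and the passage to quotients are heavier machinery than needed. A cleaner organization, closer in spirit to the Bridson--Haefliger argument, is a straight dichotomy: if $G$ is virtually polycyclic but not virtually nilpotent, it contains a subgroup $\mathbb Z^k\rtimes_\phi\mathbb Z$ with $\phi$ having an eigenvalue of modulus $\ne 1$, giving exponential distortion; if $G$ is virtually nilpotent but not virtually abelian, it contains a non-abelian torsion-free finitely generated nilpotent group, whose last lower-central term is polynomially distorted (your Heisenberg observation). Either branch contradicts the hypothesis. This avoids the quotient step entirely, which---while it does work (undistortion of $H\supseteq A$ in $G_0$ with $A$ undistorted does pass to $H/A\le G_0/A$, since $|\bar h|_{\bar Y}=\d_Y(h,A)\le C\,\d_X(h,A)=C|\bar h|_{\bar X}$)---you left unjustified.
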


We are now ready to prove Theorem \ref{main1}.

\begin{proof}[Proof of Theorem \ref{main1}]
Assume first that $G$ is a finitely generated virtually abelian group that splits as described in Theorem \ref{main1}. We want to show that the extension problem is solvable for all $H\le G$. By Lemma \ref{finind}, we can assume that $H\le A$ without loss of generality. Furthermore, by Theorem \ref{main3} it suffices to show that every $H\le A$ is incompressible in $G$.

Let $X$ be a finite generating set of $G$. Let $C=\{ \d_H\}$, where $\d_H\in \mathcal M(H)$ (see Definition \ref{MG}). Let $w_{C,X}$ and $\d_{C,X}$ denote the corresponding weight function and the induced metric on $G$ (see Definition \ref{IMG}). Finally let $Y$ be a finite generating set of $H$. By Lemma \ref{vab}  there exists a constant $D$ such that
\begin{equation}\label{hYX}
|h|_Y\le D|h|_X
\end{equation}
for every $h\in H$.

Let
\begin{equation}\label{decg}
g=f_1\ldots f_k
\end{equation}
be a geodesic decomposition of an element $g\in H$; that is, we have $f_i\in X\cup H$ for all $i$ and
\begin{equation}\label{dg1}
\d_{C,X}(1,g)=\sum\limits_{i=1}^k w_{C,X}(f_i).
\end{equation}

By our assumption, for every $a\in A$ and every $g\in G$, we have
\begin{equation}\label{ga=ag}
ga=a^{\pm 1}g.
\end{equation}
 In particular, this is true for every $a\in H$ since $H\le A$. We can rearrange the multiples in (\ref{decg}) using (\ref{ga=ag}) so that $f_1, \ldots, f_n\in X\setminus\{ 1\}$ and $f_{n+1}, \ldots, f_k\in H\setminus (X\cup \{1\})$ for some $0\le n\le k$. Let $f=f_1\cdots f_n$ and $h=f_{n+1}\cdots f_k$; we assume here that $f=1$ (respectively, $h=1$) if $n=0$ (respectively, $n=k$).  Thus $g=fh$. Since $g,h\in H$, we have $f\in H$. Since $Y$ is finite, there exists $M=\max_{y\in Y} \d_H(1,y)$. Using (\ref{hYX}) we obtain
$$
\d_H(1,f)\le M|f|_Y\le MD|f|_X\le MDn = MD\sum\limits_{i=1}^n w_{C,X} (f_i)
$$
and
$$
\d_H (1,h)\le\sum\limits_{i=n+1}^k \d_H(1,f_i) = \sum\limits_{i=n+1}^k w_{C,X}(f_i).
$$
Taking these two inequalities together and using (\ref{dg1}), we obtain
$$
\d_H(1,g)\le \d_H(1,f)+\d_H(1,h)\le (MD+1)\sum\limits_{i=1}^k w_{C,X}(f_i)=(MD+1)\d_{C,X}(1,g).
$$
Hence $\d_H(a,b)\le (MD +1)\d_{C,X}(a,b)$ for all $a,b\in H$. This completes the proof of the backward implication in Theorem \ref{main1}.

To prove the forward implication, we first note that if the extension problem is solvable for all $H\le G$, then $G$ must be virtually abelian. Indeed by part (a) of Proposition \ref{undist} and Lemma \ref{EG1} the group $G$ must be virtually polycyclic and hence it is virtually abelian by part (b) of Proposition \ref{undist} and Lemma \ref{EG2}. Let $A\lhd G$ be a finite index free abelian subgroup of $G$. We will first show that (\ref{ga=ag}) holds for all $a\in A$ and $g\in G$. (Note that this is a priori weaker than the assumption that the action of $Q=G/A$ on $A$ factors through the action of $\mathbb Z/2\mathbb Z$ by inversion as the exponent of $a$ in the right side of (\ref{ga=ag}) may depend on $a$.) Since $A$ is free abelian, it suffices to prove (\ref{ga=ag}) under the assumption that $a$ is not a proper power, i.e., $\langle a\rangle$ is a maximal cyclic subgroup of $A$.

Arguing by contradiction, assume that $gag^{-1}=b\ne a^{\pm 1}$ for some $a\in A$ and $g\in G$. If $\langle a,b\rangle $ is cyclic, then $b=a^n$ for some $n\in \mathbb Z$ since $\langle a\rangle$ is a maximal cyclic subgroup of $A$. Hence we have $gag^{-1}=a^n$ and $n\ne \pm 1$. Since $A$ is of finite index in $G$, there exists $k\in \mathbb N$ such that $g^k\in A$ and we have $$ a=g^{k}ag^{-k}=a^{n^k}.$$ This contradicts the assumption that $A$ is torsion free.

Thus the subgroup $H=\langle a,b\rangle $ has rank $2$ and therefore is naturally isomorphic to $\langle a\rangle \oplus\langle b\rangle \cong \mathbb Z^2$. Let $H\curvearrowright \mathbb R$ be the action of $H$ such that $a$ acts trivially and $b$ acts by translation: $bx=x+1$ for all $x\in \mathbb R$. Then for any extension $G\curvearrowright S$ of $H\curvearrowright \mathbb R$, the subgroup $\langle a \rangle $ will have bounded orbits in $S$ while the orbits of $\langle b\rangle $ will be unbounded. However this is impossible as these subgroups are conjugate in $G$. This means that the action $H\curvearrowright \mathbb R$ does not extend to an action of $G$.

Thus we have (\ref{ga=ag}) for all $a\in A$ and $g\in G$. To complete the proof it remains to show that the choice of the exponent in the right side of (\ref{ga=ag}) depends only on $g$. Assume that there are $a_1, a_2\in A\setminus \{ 1\}$ such that $ga_1g^{-1}=a_1$ and $ga_2g^{-1} =a_2^{-1}$. Then $a_1a_2^{-1} =ga_1a_2g^{-1}=(a_1a_2)^{\pm 1}$. However the equality $a_1a_2^{-1} =(a_1a_2)^{\pm 1}$ is impossible for non-trivial elements $a_1$, $a_2$ of a free abelian group. This contradiction completes the proof.
\end{proof}

\subsection{Lipschitz retractions to hyperbolically embedded subgroups}

Our next goal is to show that hyperbolically embedded collections of subgroups are incompressible with respect to suitable relative generating sets. We prove a slightly stronger result, Proposition \ref{propLR}, which seems to be of independent interest and may have other applications.

Throughout this section we fix a group $G$, a (possibly infinite) collection of subgroups $\Hl$ of $G$, and a generating set $X$ of $G$ relative to $\Hl$. Until Proposition \ref{propLR} we \emph{do not} assume that $\Hl$ is hyperbolically embedded in $G$.

\begin{defn}[Equivariant nearest point projection]
For every $g\in G$ and $\lambda\in \Lambda$, let $\pi_\lambda\colon G\to H_\lambda$ be a map satisfying
\begin{equation}\label{npp}
\dxh (g, \pi_\lambda(g))=\min\limits_{h\in H_\lambda}\dxh (g, h).
\end{equation}
Then we call $\pi_\lambda$ a \emph{nearest point projection} of $G$ to $H_\lambda$. If, in addition,  $\pi_\lambda (hg)=h \pi_\lambda(g)$ for all $h\in H_\lambda$ and $g\in G$, we say that $\pi_\lambda$ is an \emph{equivariant nearest point projection}. (Note that ``equivariant" refers to ``$H_\lambda$--equivariant", not ``$G$--equivariant" here.)
\end{defn}

\begin{lem}\label{enpp}
For every $\lambda\in \Lambda$, an equivariant nearest point projection $\pi_\lambda\colon G\to H_\lambda$ exists.
\end{lem}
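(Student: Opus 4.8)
The plan is to construct $\pi_\lambda$ by making a choice on each left coset of $H_\lambda$ and then extending it equivariantly. First I would fix, for every coset $C \in G/H_\lambda$, a representative $g_C \in C$ that minimizes $\dxh(1, g)$ over $g \in C$ — equivalently, an element $g_C \in C$ realizing $\min_{g\in C}\dxh(g,1)$, which exists because $\dxh$ takes values in $\mathbb{N}\cup\{0,\infty\}$, so the minimum is attained (if the distance is $\infty$ on the whole coset, pick any representative). For the coset $H_\lambda$ itself, choose $g_{H_\lambda} = 1$. Then for $g \in G$, write $g = h g_C$ with $C = gH_\lambda$ and $h = g g_C^{-1} \in H_\lambda$, and define $\pi_\lambda(g) = h = g g_C^{-1}$.

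Next I would verify the two required properties. For equivariance: given $h' \in H_\lambda$ and $g \in G$ with $g = h g_C$ as above, we have $h'g = (h'h) g_C$ and the coset of $h'g$ is still $C$, so $\pi_\lambda(h'g) = h'h = h'\pi_\lambda(g)$. For the nearest-point property \eqref{npp}: the left action of $G$ on itself is by isometries of $\Gamma(G, X\sqcup\mathcal H)$ (left multiplication by $g$ sends the edge labeled $a$ from $u$ to $ua$, to the edge labeled $a$ from $gu$ to $gua$), hence $\dxh$ is left-invariant. Therefore
\[
\min_{k\in H_\lambda}\dxh(g,k) = \min_{k\in H_\lambda}\dxh(1, g^{-1}k) = \min_{k\in H_\lambda}|g^{-1}k|_{X\cup\mathcal H}.
\]
As $k$ ranges over $H_\lambda$, the element $g^{-1}k$ ranges over the coset $g^{-1}H_\lambda = C^{-1}$ (more precisely, over the coset of $g^{-1}$), and by the choice of $g_C$ together with left-invariance, this minimum is realized when $g^{-1}k = $ (inverse of the chosen representative of the relevant coset). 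Unwinding, the minimizing $k$ is exactly $\pi_\lambda(g) = gg_C^{-1}$: indeed $\dxh(g, gg_C^{-1}) = \dxh(1, g_C^{-1}) = \dxh(g_C, 1) = \min_{g'\in C}\dxh(g',1) = \min_{k\in H_\lambda}\dxh(g,k)$, using left-invariance twice and the defining property of $g_C$.

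The only subtlety — and the step I would be most careful about — is the bookkeeping relating the coset $gH_\lambda$ of $g$ to the coset over which the minimum in \eqref{npp} is genuinely taken, since \eqref{npp} minimizes over $h \in H_\lambda$ (right-hand cosets appear after inverting). The cleanest route is to avoid inverting altogether: observe directly that $\dxh(g, k) = \dxh(1, g^{-1}k)$ is not what we want; instead use right-invariance is false, so one should phrase everything in terms of $\dxh(g,k)$ with $k = gg_C^{-1}$ and note $\dxh(g, gg_C^{-1}) = \dxh(g_C, 1)$ by left-invariance (multiply on the left by $g_C g^{-1}$), which is exactly $\min_{g' \in gH_\lambda}\dxh(g', 1) = \min_{g' \in gH_\lambda}\dxh(g, g'g^{-1}g')$... to keep this transparent I would simply note $\{gg_C^{-1}\}$ as $C$ varies with $g$ fixed is not all of $H_\lambda$, so instead: for fixed $g$, as $h$ ranges over $H_\lambda$, the products $gh^{-1}$... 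I will present it via the substitution $k \mapsto g k$ showing $\min_{h\in H_\lambda}\dxh(g, h)$ equals, after translating by $g_C g^{-1}$ on the left, $\min_{h\in H_\lambda}\dxh(g_C, (g_Cg^{-1})h)$ and since $g_Cg^{-1}H_\lambda = g_C H_\lambda = H_\lambda$... the element $(g_Cg^{-1})h$ ranges over $H_\lambda$ again, so this is $\min_{h'\in H_\lambda}\dxh(g_C, h') \le \dxh(g_C, 1)$, with equality by minimality of $\dxh(g_C,1)$ among all of $C = g_CH_\lambda$; and it is realized at $h' = 1$, i.e. at the original $h = \pi_\lambda(g)$. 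This finishes the proof.
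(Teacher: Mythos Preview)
Your overall strategy---pick a coset representative and propagate by the $H_\lambda$-action---is the same as the paper's, but you have used \emph{left} cosets $gH_\lambda$ where you need \emph{right} cosets $H_\lambda g$, and this breaks both verifications.

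The equivariance step is where the error is most visible. You claim that for $h'\in H_\lambda$ ``the coset of $h'g$ is still $C$'', i.e.\ $h'gH_\lambda = gH_\lambda$. This is equivalent to $g^{-1}h'g\in H_\lambda$, which fails in general unless $H_\lambda$ is normal. What \emph{is} invariant under left multiplication by $H_\lambda$ is the right coset: $H_\lambda(h'g)=H_\lambda g$. The paper therefore fixes a set $T_\lambda$ of right coset representatives, chooses for each $t\in T_\lambda$ any $\pi_\lambda(t)\in H_\lambda$ realizing $\dxh(t,H_\lambda)$, and sets $\pi_\lambda(g)=gt^{-1}\pi_\lambda(t)$ for $g\in H_\lambda t$; equivariance is then immediate since $hg\in H_\lambda t$ whenever $g\in H_\lambda t$.

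The nearest-point verification suffers from the same left/right confusion. For $h\in H_\lambda$ one has $\dxh(g,h)=\dxh(h^{-1}g,1)$, so $\min_{h\in H_\lambda}\dxh(g,h)=\min_{g'\in H_\lambda g}\dxh(g',1)$: the minimum is over the \emph{right} coset $H_\lambda g$, not over $gH_\lambda$. Your final paragraph correctly reduces to $\min_{h'\in H_\lambda}\dxh(g_C,h')$, but then asserts this equals $\dxh(g_C,1)$ ``by minimality of $\dxh(g_C,1)$ among all of $C=g_C H_\lambda$''. That minimality gives $\dxh(g_C h,1)\ge \dxh(g_C,1)$ for $h\in H_\lambda$, whereas what you need is $\dxh(h' g_C,1)\ge \dxh(g_C,1)$---a statement about $H_\lambda g_C$, not $g_C H_\lambda$. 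If you redo the construction with right cosets (choose $t_D\in D=H_\lambda g$ minimizing $\dxh(\cdot,1)$ and set $\pi_\lambda(g)=g t_D^{-1}$), both checks go through cleanly, and this is exactly the paper's argument specialized to a convenient choice of representatives.
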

\begin{proof}
Let $T_\lambda$ be the set of representatives of right cosets of $H_\lambda $ in $G$. For every $t\in T_\lambda$, define $\pi_\lambda(t)$ to be an arbitrary element of $H$ satisfying (\ref{npp}) and for every $g\in H_\lambda t$ define
\begin{equation}\label{defpi}
\pi_\lambda(g)=gt^{-1} \pi_\lambda(t).
\end{equation}
Then we have
$$
\dxh (g, \pi_\lambda(g))= \dxh(g, gt^{-1} \pi_\lambda(t))=\dxh(t, \pi_\lambda(t)).
$$
On the other hand, we obtain
$$
\dxh(g, H_\lambda)=\dxh (tg^{-1}g, tg^{-1}H_\lambda)=\dxh (t, H_\lambda)=\dxh(t, \pi_\lambda(t))
$$
since $tg^{-1}\in H_\lambda $. Therefore, $\dxh (g, \pi_\lambda(g))=\dxh(g, H_\lambda)$, i.e., $\pi_\lambda$ is indeed a nearest point projection. It remains to note that for every $t\in T_\lambda$, $h\in H_\lambda$, and $g\in H_\lambda t$, we have $hg\in H_\lambda t$. Using (\ref{defpi}) we obtain
$$
\pi_\lambda(hg)=hgt^{-1}\pi_\lambda(t)= h\pi_\lambda(g),
$$
i.e., $\pi_\lambda$ is equivariant.
\end{proof}

The main result of this section is the following.

\begin{prop}\label{propLR}
Suppose that $\Hl\h (G,X)$ and let $C=\{ \d_{H_\lambda}\}_{\lambda\in \Lambda}$ be a collection of metrics $\d_{H_\lambda}\in \mathcal M(H_\lambda)$. Let $\d_{C,X}$ denote the corresponding induced metric on $G$. Then for every $\lambda\in\Lambda$, every nearest point projection $\pi_\lambda\colon G\to H_\lambda$ induces a Lipschitz map $(G, \d_{C,X})\to (H_\lambda, \d_{H_\lambda})$.
\end{prop}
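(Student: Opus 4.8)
The plan is to fix a $w_{C,X}$-geodesic decomposition of $g_1^{-1}g_2$, peel it off one letter at a time, and control how far the nearest point projection moves at each step by means of Lemma~\ref{ngon} together with the properness of $(H_\lambda,\dl)$.

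\smallskip

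Write $\G$ for the (hyperbolic) Cayley graph and let $D$ be the constant of Lemma~\ref{ngon}. Fix $g_1,g_2\in G$ (recall $\d_{C,X}$ is finite valued, as $\mathrm{supp}(w_{C,X})$ generates $G$), choose a geodesic decomposition $g_1^{-1}g_2=f_1\cdots f_k$ with $\sum_{i=1}^k w_{C,X}(f_i)=\d_{C,X}(g_1,g_2)$ and all $f_i\neq 1$, and set $v_i=g_1f_1\cdots f_i$, so that $v_0=g_1$, $v_k=g_2$, and $v_{i-1},v_i$ are joined by a single edge $q_i$ of $\G$ (labelled by $f_i$, regarded as a letter of $X$ or of some $H_\mu$). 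Put $h_i=\pi_\lambda(v_i)$, and note that $\pi_\lambda$ restricts to the identity on $H_\lambda$, since $\dxh(v,H_\lambda)=0$ for $v\in H_\lambda$. Because $\d_{H_\lambda}$ is a genuine metric, the triangle inequality reduces the whole statement to producing a constant $L_0$, independent of $i$ and of $g_1,g_2$, with $\d_{H_\lambda}(h_{i-1},h_i)\le L_0\,w_{C,X}(f_i)$: summing over $i$ and using $w_{C,X}(f_i)\ge 1$ then gives $\d_{H_\lambda}(\pi_\lambda(g_1),\pi_\lambda(g_2))\le L_0\,\d_{C,X}(g_1,g_2)$.

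\smallskip

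To bound one step (we may assume $h_{i-1}\ne h_i$, otherwise there is nothing to do), consider the geodesic polygon in $\G$ with consecutive vertices $v_{i-1},h_{i-1},h_i,v_i$, whose sides are a geodesic $p^{(i-1)}$ from $v_{i-1}$ to $h_{i-1}$, the single $\mathcal H$-edge $e_i$ from $h_{i-1}$ to $h_i$ (both endpoints in $H_\lambda$), a geodesic $p^{(i)}$ from $h_i$ to $v_i$, and the edge $q_i$ from $v_i$ to $v_{i-1}$; this is a geodesic $n$-gon with $n\le 4$. If $e_i$ is an \emph{isolated $H_\lambda$-component} of this polygon, then Lemma~\ref{ngon} gives $\dl(h_{i-1},h_i)\le 4D$, so $h_{i-1}^{-1}h_i$ lies in the set $\{f\in H_\lambda:\dl(1,f)\le 4D\}$, which is finite by part (b) of Definition~\ref{he-def}; hence $\d_{H_\lambda}(h_{i-1},h_i)\le N$, where $N:=\max\{\d_{H_\lambda}(1,f):f\in H_\lambda,\ \dl(1,f)\le\max(4D,1)\}<\infty$, and $w_{C,X}(f_i)\ge 1$ closes this case.

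\smallskip

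The remaining case, where $e_i$ is \emph{not} an isolated $H_\lambda$-component, is the main obstacle. Since a geodesic from a point $v$ to $\pi_\lambda(v)$ meets $H_\lambda$ only at $\pi_\lambda(v)$ (otherwise $H_\lambda$ would contain a strictly closer point), the sides $p^{(i-1)}$ and $p^{(i)}$ carry no $H_\lambda$-edges; a short inspection of the polygon then shows that $e_i$ can fail to be an isolated $H_\lambda$-component only when $q_i$ is itself an $H_\lambda$-edge meeting $e_i$, which forces $v_{i-1},v_i\in H_\lambda$. In that event $h_{i-1}=v_{i-1}$, $h_i=v_i$, and $h_{i-1}^{-1}h_i=f_i\in H_\lambda$, so it remains to see that the a priori uncontrolled quantity $\d_{H_\lambda}(1,f_i)$ is again bounded by $N$. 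By the definition of $w_{C,X}$, either $f_i\in X$, or $w_{C,X}(f_i)=\d_{H_\lambda}(1,f_i)$ (and we are done with $L_0\ge 1$), or $f_i\in H_\lambda\cap H_\mu$ for some $\mu\ne\lambda$; and in the first and the last of these subcases $f_i$ lies in $\{f\in H_\lambda:\dl(1,f)\le 1\}$, because the $X$-edge — respectively the $H_\mu$-edge — from $1$ to $f_i$ is not an edge of $\Gamma(H_\lambda,H_\lambda)$ and hence survives in $\Delta_\lambda$. Thus $\d_{H_\lambda}(h_{i-1},h_i)=\d_{H_\lambda}(1,f_i)\le N$, and $w_{C,X}(f_i)\ge 1$ finishes the step. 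Taking $L_0=\max(N,1)$ completes the argument. The only genuinely nontrivial point is this last reduction — that non-isolation of $e_i$ forces both endpoints into $H_\lambda$, after which $\d_{H_\lambda}(1,f_i)$ can only involve the finitely many elements of $X\cap H_\lambda$ or of $\bigcup_{\mu\ne\lambda}(H_\lambda\cap H_\mu)$, all sitting inside the finite ball $\{f\in H_\lambda:\dl(1,f)\le 1\}$; the rest is bookkeeping with geodesic polygons and with the definitions of $w_{C,X}$ and $\dl$.
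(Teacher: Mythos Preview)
Your argument is essentially the paper's own proof: the same geodesic quadrilateral, the same application of Lemma~\ref{ngon} in the isolated case, and your treatment of the non-isolated subcases (observing $\dl(1,f_i)\le 1$ when $f_i\in X$ or $f_i\in H_\mu$ for some $\mu\ne\lambda$) simply unpacks what the paper achieves by relabeling the edge $v$ to force a return to Case~1. One phrase should be tightened: the geodesics $p^{(i-1)},p^{(i)}$ may well carry $H_\lambda$-edges lying in cosets $gH_\lambda$ with $g\notin H_\lambda$; what your justification (``the geodesic meets $H_\lambda$ only at its endpoint'') actually proves---and what suffices---is that no $H_\lambda$-component of those sides has vertices in the trivial coset $H_\lambda$, hence none can be connected to $e_i$.
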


\begin{proof}
Throughout this proof, we fix $\lambda \in \Lambda$. Let $K$ be a positive integer satisfying
\begin{equation}\label{K4D}
K\ge  \max\{ \d_{H_\lambda}(1, h) \mid  h\in H_\lambda, \; \dl(1, h)\le 4D\},
\end{equation}
where $D$ is the constant from Lemma \ref{ngon}. Note that the maximum is taken over a finite set since $\Hl\h (G,X)$. We will show that for every $f,g\in G$,
\begin{equation}\label{dfg}
\d_{H_\lambda }(\pi_\lambda(f), \pi_\lambda(g))\le K\d_{C,X}(f,g).
\end{equation}

Let $f^{-1}g=f_1\cdots f_k$ be a geodesic decomposition. That is, $f_1, \ldots, f_k$ are elements of the set
$$Y= X\cup \left(\bigcup\limits_{\lambda\in \Lambda} H_{\lambda}\right)$$  and
\begin{equation}\label{dfg1}
\d_{C,X}(f,g)=\sum\limits_{i=1}^k w_{C,X}(f_i).
\end{equation}
To prove (\ref{dfg}) we first show that
\begin{equation}\label{dfft}
\d_{H_\lambda }(\pi_\lambda(a), \pi_\lambda(at))\le Kw_{C,X}(t)
\end{equation}
for every $a\in G$ and $t\in Y$.

To this end we denote by $u$ (respectively $w$) a geodesic path in $\G$ that goes from $\pi_\lambda(a)$ to $a$ (respectively, from $at$ to $\pi_\lambda(at)$). Without loss of generality, we can assume that $t\ne 1$.  Let $e$ be the edge labeled by an element of $H_\lambda$ that connects $\pi_\lambda(at)$ to $\pi_\lambda(a)$. Finally, let $v$ denote the edge of $\G$ starting at $a$ and labeled by $t$ (see Fig. \ref{F2}).

\begin{figure}
  % Requires \usepackage{graphicx}
  \centering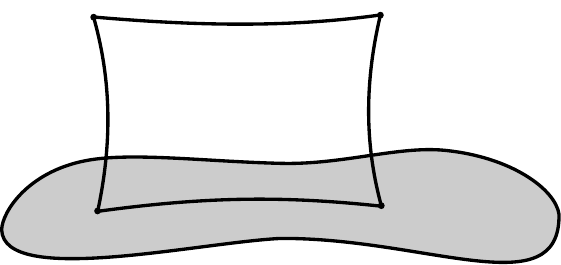\\
  \caption{The proof of Proposition \ref{propLR}.}\label{F2}
\end{figure}

If $e$ is connected to an edge $e^\prime $ of $u$ labeled by an element of $H_\lambda$, then $e^\prime_+\in H_\lambda$ and $\dxh (a, e^\prime_+)< \dxh (a, \pi_\lambda(a))$, which contradicts our assumption that $\pi_\lambda$ is a nearest point projection. Similarly $e$ cannot be connected to an edge of $w$ labeled by an element of $H_\lambda$. Thus $e$ is an isolated $H_\lambda$--component of the path $weu$.

We now consider two cases.

{\it Case 1.} First assume that $e$ is isolated in the geodesic quadrilateral $p=weuv$. Then $$\dl(\pi_\lambda(a), \pi_\lambda(at))=\dl(e_-,e_+)\le 4D$$ by Lemma \ref{ngon}. Therefore, $\d_{H_\lambda }(\pi_\lambda(a), \pi_\lambda(at)) \le K$ by (\ref{K4D}). In particular, (\ref{dfft}) holds since $w_{C,X}$ takes values in $\mathbb N\cup \{ 0\}$ and $w_{C,X}(t)=0$ only if $t=1$.

{\it Case 2.} Suppose now that $e$ is not isolated in $p$. Since $e$ is isolated in $weu$, it can only be connected to $v$. In particular, we have $t\in H_\lambda $. Without loss of generality, we can also assume that
\begin{equation}\label{wct}
w_{C,X}(t)=\d_{H_\lambda}(1,t)
\end{equation}
in this case. Indeed, recall that labels of edges of $\G$ are taken from the \emph{disjoint} union of $X$ and subgroups $H_\mu$, $\mu\in \Lambda$. Thus if we also have $t\in H_\mu $ for some $\mu\ne \lambda$,  we can simply replace $v$ with another edge of $\G$ with the same endpoints as $v$ and the label $t\in H_\mu$, which makes Case 2 impossible. Similarly we rule out the case $t\in X$. Thus, without loss of generality, we can assume that $t\notin X$ and $\Lambda (t)=\{ \lambda\}$ in the notation of Definition \ref{IMG}; hence (\ref{wct}) holds by the definition of $w_{C,X}$.

Since $e$ is connected to $v$, we have $a=v_-\in H_\lambda $ and $at=v_+\in H_\lambda$. Since $\pi_\lambda $ is a nearest point projection, it is identical on $H_\lambda$. In particular, we have $\pi_\lambda(a)=a$ and $\pi_\lambda (at)=at$. Therefore, by (\ref{wct}) we have
$$
\d_{H_\lambda}(\pi_\lambda (a), \pi_\lambda (at)) =\d_{H_\lambda} (a, at)=\d_{H_\lambda} (1, t) =w_{C,X} (t)
$$
in this case. This completes the proof of (\ref{dfft}).

Now let $$h_0=\pi_\lambda (f), \;\; h_1=\pi_\lambda (ff_1), \;\;\ldots, \;\; h_k=\pi_\lambda (ff_1\cdots f_k)=\pi_\lambda(g).$$ Using the triangle inequality, (\ref{dfg1}) and (\ref{dfft}), we obtain
$$
\d_{H_\lambda}(\pi_\lambda (f), \pi_\lambda(g))\le \sum\limits_{i=1}^k \d_{H_\lambda}(h_{i-1}, h_i)\le \sum\limits_{i=1}^k Kw_{C,X}(f_i)=K\d_{C,X}(f,g).
$$
\end{proof}

Since every nearest point projection $G\to H_\lambda$ is the identity on $H_\lambda$, we obtain the following corollary of Lemma \ref{enpp} and Proposition \ref{propLR}.
\begin{cor}\label{corLR}
Assume that $\Hl\h (G,X)$. Then for every collection of left invariant metrics $C=\{ \d_{H_\lambda}\}$, where $\d_{H_\lambda}\in \mathcal M(H_\lambda)$, and every $\lambda \in \Lambda$, there exists an $H_\lambda$--equivariant Lipschitz map $(G, \d_{C,X})\to (H_\lambda, \d_{H_\lambda})$ whose restriction to $H_\lambda$ is the identity map.
\end{cor}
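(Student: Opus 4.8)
The plan is to read this off directly from Lemma~\ref{enpp} and Proposition~\ref{propLR}, adding only the trivial remark that a nearest point projection is automatically the identity on $H_\lambda$. Concretely: first, for a fixed $\lambda\in\Lambda$, I would invoke Lemma~\ref{enpp} to produce an \emph{equivariant} nearest point projection $\pi_\lambda\colon G\to H_\lambda$, so that $\pi_\lambda(hg)=h\pi_\lambda(g)$ for all $h\in H_\lambda$ and $g\in G$; this is exactly the $H_\lambda$-equivariance required. Second, since $\Hl\h(G,X)$ and $C$ is as given, Proposition~\ref{propLR} applies to this very map and yields that $\pi_\lambda$ induces a Lipschitz map $(G,\d_{C,X})\to(H_\lambda,\d_{H_\lambda})$, giving the metric assertion. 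Third, for the restriction: given $h\in H_\lambda$, plugging $k=h$ into the defining equation (\ref{npp}) forces $\dxh(h,\pi_\lambda(h))=\min_{k\in H_\lambda}\dxh(h,k)=0$, hence $\pi_\lambda(h)=h$, so $\pi_\lambda$ restricts to the identity on $H_\lambda$. Assembling these three observations completes the proof.

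There is no genuine obstacle at this stage, since all the substantive work is already packaged in the two cited results: in Lemma~\ref{enpp} the point was to build the projection coset-by-coset on a transversal of $H_\lambda$ and then transport it by the left $H_\lambda$-action to obtain equivariance, while the heart of Proposition~\ref{propLR} was the Lipschitz estimate $\d_{H_\lambda}(\pi_\lambda(a),\pi_\lambda(at))\le K w_{C,X}(t)$, proved via the dichotomy (using Lemma~\ref{ngon}) according to whether the connecting $H_\lambda$-edge $e$ is isolated in the geodesic quadrilateral $weuv$. Thus the corollary itself requires only bookkeeping, and I would present it as a two- or three-line deduction.
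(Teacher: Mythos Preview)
Your proposal is correct and matches the paper's own argument exactly: the corollary is stated as an immediate consequence of Lemma~\ref{enpp} and Proposition~\ref{propLR}, together with the observation that any nearest point projection restricts to the identity on $H_\lambda$. There is nothing to add.
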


Finally, we pass from the property of being a Lipschitz retract to the property of being undistorted. This is a fairly standard argument.

\begin{cor}\label{he->au}
Suppose that $\Hl\h (G,X)$. Then $\Hl$ is incompressible in $G$ with respect to $X$. In particular, if $G$ is finitely generated and hyperbolic relative to a collection of subgroups $\Hi$, then $\Hi$ is incompressible in $G$.
\end{cor}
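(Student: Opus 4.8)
The plan is to deduce incompressibility directly from Corollary \ref{corLR} via the standard principle that a Lipschitz retract is undistorted. Fix an arbitrary collection $C=\{\d_{H_\lambda}\}_{\lambda\in\Lambda}$ of metrics $\d_{H_\lambda}\in\mathcal M(H_\lambda)$ and let $\d_{C,X}$ be the induced metric on $G$. For each $\lambda\in\Lambda$ let $\iota_\lambda\colon (H_\lambda,\d_{H_\lambda})\to (G,\d_{C,X})$ denote the inclusion map. The first (elementary) step is to observe that $\iota_\lambda$ is $1$-Lipschitz: for $h\in H_\lambda$, applying (\ref{ellw}) to the trivial decomposition of $h$ gives $\d_{C,X}(1,h)\le w_{C,X}(h)\le \d_{H_\lambda}(1,h)$, and by left invariance of both metrics it follows that $\d_{C,X}(h_1,h_2)\le \d_{H_\lambda}(h_1,h_2)$ for all $h_1,h_2\in H_\lambda$.

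The second step is to invoke Corollary \ref{corLR}, which (using Lemma \ref{enpp} to supply an equivariant nearest point projection and Proposition \ref{propLR} for its Lipschitz property) produces, for each $\lambda$, an $H_\lambda$-equivariant $K_\lambda$-Lipschitz map $r_\lambda\colon (G,\d_{C,X})\to (H_\lambda,\d_{H_\lambda})$ whose restriction to $H_\lambda$ is the identity, i.e. $r_\lambda\circ\iota_\lambda=\mathrm{id}_{H_\lambda}$. Composing, for all $h_1,h_2\in H_\lambda$ we obtain
$$\d_{H_\lambda}(h_1,h_2)=\d_{H_\lambda}\bigl(r_\lambda(\iota_\lambda(h_1)),r_\lambda(\iota_\lambda(h_2))\bigr)\le K_\lambda\,\d_{C,X}\bigl(\iota_\lambda(h_1),\iota_\lambda(h_2)\bigr)\le K_\lambda\,\d_{H_\lambda}(h_1,h_2),$$
so $\iota_\lambda$ is a bi-Lipschitz, hence quasi-isometric, embedding. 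Since $C$ was arbitrary, this says precisely that $\Hl$ is incompressible in $G$ with respect to $X$.

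For the final clause, suppose $G$ is finitely generated and hyperbolic relative to the finite collection $\Hi$. By Proposition \ref{relhypdef} there is a finite subset $X\subseteq G$ with $\Hi\h (G,X)$; in particular $X$ is a finite generating set of $G$ relative to $\Hi$, so the first part gives that $\Hi$ is incompressible with respect to the finite set $X$, which by definition means $\Hi$ is incompressible in $G$. (By Lemma \ref{rhX} and Lemma \ref{XYinc} the precise choice of finite relative generating set is immaterial.)

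There is no serious obstacle here: all the geometric content has been absorbed into Proposition \ref{propLR} and Corollary \ref{corLR}. The only points needing a moment's attention are the inequality $\d_{C,X}\vert_{H_\lambda}\preceq\d_{H_\lambda}$ (immediate from the definition of $w_{C,X}$) and the fact that ``equivariant'' in the nearest point projection means $H_\lambda$-equivariant, which is exactly what makes $r_\lambda\circ\iota_\lambda$ literally the identity and not merely a bounded perturbation of it, so that the retract argument applies on the nose.
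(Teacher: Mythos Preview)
Your proof is correct and follows essentially the same route as the paper: use Corollary \ref{corLR} to obtain a Lipschitz retraction $(G,\d_{C,X})\to (H_\lambda,\d_{H_\lambda})$, observe that the inclusion is $1$-Lipschitz from the definition of the induced metric, and conclude that the inclusion is bi-Lipschitz; then invoke Proposition \ref{relhypdef} for the relatively hyperbolic clause. One small expository remark: in your closing comment, the reason $r_\lambda\circ\iota_\lambda$ is literally the identity is simply that \emph{any} nearest point projection restricts to the identity on $H_\lambda$ (since $\dxh(h,h)=0$), not the $H_\lambda$-equivariance per se---equivariance is an additional feature of Corollary \ref{corLR} that is not actually needed here.
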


\begin{proof}
Fix any $\lambda\in \Lambda$. Let $C=\{ \d_{H_\lambda}\}$, where $\d_{H_\lambda}\in \mathcal M(H_\lambda)$. Let $\pi_\lambda \colon (G, \d_{C,X})\to (H_\lambda, \d_{H_\lambda})$ be a Lipschitz map whose restriction to $H_\lambda$ is the identity map. Let $L$ be the corresponding Lipschitz constant. Then for every $g,h\in H_\lambda$, we have
$$
\d_{H_\lambda}(g,h)= \d_{H_\lambda}(\pi_\lambda (g), \pi_\lambda (h))\le L \d_{C,X}(g, h).
$$
The opposite inequality $\d_{C,X}(g,h)\le \d_{H_\lambda}(g,h)$ for all $g,h\in H$ is obvious from the definition of the induced metric. Thus the inclusion $H_\lambda\to G$  induces a quasi-isometric embedding $(H_\lambda, \d_{H_\lambda})\to (G, \d_{C,X})$. To derive the claim about relatively hyperbolic groups we only need to refer to  Proposition \ref{relhypdef}.
\end{proof}

\subsection{Extending actions of hyperbolically embedded subgroups}

Recall that an equivalence class $A\in \AG$ is called {\it hyperbolic} if it consists of $G$-actions on hyperbolic metric spaces. The main goal of this section is to prove the following result; Theorem \ref{main2} follows from it immediately via Proposition \ref{relhypdef}.

\begin{thm}\label{he}
Let $G$ be a group, $\Hi$ a collection of subgroups of $G$, and let $X$ a be relative generating set of $G$ with respect to $\Hi$. \begin{enumerate}
\item[(a)] Suppose that $\Hi\h (G,X)$. Then for every collection
\begin{equation}\label{a1an}
A=(A_1, \ldots, A_n)\in \AHi,
\end{equation}
the induced action $\Ind_X(A)$ is an extension of $A$; if, in addition, each $A_i$ is hyperbolic, then so is $\Ind_X(A)$.
\item[(b)] Conversely, suppose that $H_1, \ldots, H_n$ are countable and for every collection (\ref{a1an}), where each $A_i$ is hyperbolic, the induced action $\Ind_X(A)$ is a hyperbolic extension of $A$. Then $\Hi\h (G,X)$.
\end{enumerate}
\end{thm}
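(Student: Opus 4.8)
This is immediate from earlier results: by Corollary \ref{he->au}, $\Hi\h(G,X)$ implies that $\Hi$ is incompressible in $G$ with respect to $X$, and then Lemma \ref{inc->ext} shows that $\Ind_X(A)$ is an extension of $A$ for every $A\in\AHi$.

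\textbf{Part (a), hyperbolicity.} Suppose each $A_i$ is hyperbolic. Choose hyperbolic graph representatives $R_i$ of $A_i$ with free vertex actions (Proposition \ref{graph}), fix a transversal $\mathcal T$ and base vertices $\mathcal B$, put $\mathcal A=\{H_1\curvearrowright R_1,\dots,H_n\curvearrowright R_n\}$ and $S=S_{X,\mathcal T,\mathcal B,\mathcal A}$. I will verify the isoperimetric criterion of Proposition \ref{IP} for the graph $S$. The inputs are: (i) since $\Hi\h(G,X)$, Theorem \ref{ipchar} provides a \emph{strongly bounded} relative presentation $\langle X,\mathcal H\mid\mathcal S\cup\mathcal R\rangle$ of $G$ with linear relative isoperimetric function; thus every $R\in\mathcal R$ has length $\le C_0$, the $\mathcal H$-letters occurring in $\mathcal R$ lie in a fixed finite set, and $Area^{rel}(W)\le C_1|W|+C_1$; set $W_0=1+\max\{\d_{R_i}(b_i,fb_i)\}$, the maximum over $i$ and over the finitely many $\mathcal H$-letters $f$ appearing in $\mathcal R$ (so $W_0<\infty$). (ii) Each $R_i$ is hyperbolic, so Proposition \ref{IP} applied to $R_i$ gives constants $L_i,M_i$. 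Now take a simple loop $c$ in $S$ and proceed in three steps. \emph{Straighten:} replace each maximal subpath $a$ of $c$ lying inside a glued copy $\{gH_i\}\times R_i$ (its endpoints automatically lie on the coset $gH_i$) by an $R_i$-geodesic $\pi_a$ with the same endpoints; the resulting loop $c''$ satisfies $\ell(c'')\le\ell(c)$ and $[c]=[c'']+\sum_a[a\bar\pi_a]$, where each loop $a\bar\pi_a$ lies in one $R_i$ and, by (ii), is a sum of $\le 2L_i\ell(a)$ loops of diameter $\le M_i$; this costs $O(\ell(c))$ bounded-diameter loops. \emph{Project:} let $w_0,\dots,w_m=w_0$ be the consecutive vertices of $c''$ lying on $\Gamma(G,X)\subseteq S$; each piece $\sigma_j$ of $c''$ between consecutive $w$'s is either an $X$-edge or an $R_i$-geodesic joining two orbit points, and $m\le 2\ell(c)$; replace each $R_i$-piece by a single $\mathcal H$-edge to obtain a loop $\hat c$ in $\G$ with $\ell(\hat c)=m$. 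Its label represents $1$ in $G$, so it bounds a van Kampen diagram $\Delta$ over $\langle X,\mathcal H\mid\mathcal S\cup\mathcal R\rangle$ with at most $C_1m+C_1=O(\ell(c))$ $\mathcal R$-cells. \emph{Lift:} map the $1$-skeleton of $\Delta$ into $S$ by keeping $X$-edges and replacing each $\mathcal H$-edge (labeled by some $h\in H_i$, joining two orbit points of a coset) by a chosen $R_i$-geodesic; on $\partial\Delta$ use exactly the pieces $\sigma_j$, so $\partial\Delta$ lifts to $c''$. After this replacement every $\mathcal R$-cell becomes a loop of length $\le C_0W_0$, and there are $O(\ell(c))$ of them. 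Group the $\mathcal S$-cells into maximal edge-connected regions $\Omega_1,\dots,\Omega_N$; each $\Omega_l$ consists of $\mathcal S$-cells over a single $H_i$ and lies inside one coset $gH_i$, so its lifted boundary cycle $\widetilde{\partial\Omega_l}$ lies in $\{gH_i\}\times R_i\cong R_i$. Its length is bounded by the lifted length of the $\partial\Delta$-edges it contains (these lift to some of the $\sigma_j$, total $\le\ell(c'')\le\ell(c)$) plus that of the edges it shares with $\mathcal R$-cells (each such edge carries an $\mathcal H$-letter from the finite set, hence lifts to a path of length $\le W_0$, and there are $\le C_0\cdot O(\ell(c))$ of them); so $\sum_l\ell(\widetilde{\partial\Omega_l})=O(\ell(c))$. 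Since the lifted $\Delta$ is a singular disk with boundary $c''$ tiled by the lifted $\mathcal R$-cells and the regions $\Omega_l$, we get $[c'']=\sum(\text{lifted }\partial\mathcal R\text{-cells})+\sum_l[\widetilde{\partial\Omega_l}]$; the $\mathcal R$-contributions are already bounded-diameter loops, and filling each $\widetilde{\partial\Omega_l}$ inside $R_i$ via (ii) costs $\le L_i\ell(\widetilde{\partial\Omega_l})$ loops of diameter $\le M_i$. Altogether $[c]$ is a sum of $\le L\ell(c)$ loops of diameter $\le M:=\max(C_0W_0,\max_iM_i)$ for constants $L,M$ independent of $c$; by Proposition \ref{IP}, $S$ is hyperbolic.

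\textbf{Part (b).} Assume each $H_i$ is countable and that $\Ind_X(A)$ is a hyperbolic extension of $A$ for every hyperbolic $A\in\AHi$. First, apply the hypothesis to the bounded (hence $0$-hyperbolic) actions $A_i=[H_i\curvearrowright\Gamma(H_i,H_i)]$, which have free vertex actions: by Proposition \ref{proper}(b), $\Ind_X(A)=[\Gamma(G,X\cup H_1\cup\dots\cup H_n)]$, a graph isometric on vertices to $\G$; so $\G$ is hyperbolic. Second, fix $\lambda$. Using countability of $H_\lambda$, pick a left invariant integer-valued metric $\d_{H_\lambda}\in\mathcal M(H_\lambda)$ that is \emph{proper} (every ball finite — such a metric exists on every countable group) and realize it by a connected graph $\Gamma_\lambda$ carrying a free-vertex $H_\lambda$-action whose orbit of a base vertex $b$ is isometric to $(H_\lambda,\d_{H_\lambda})$. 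Let $R_\lambda=\mathcal H(\Gamma_\lambda)$ be the combinatorial horoball of Definition \ref{GMhoro}: it is hyperbolic by Lemma \ref{GM}(a), carries a free-vertex $H_\lambda$-action, and by the geodesic description in Lemma \ref{GM}(b) the level-$0$ vertices satisfy $\d_{R_\lambda}((u,0),(v,0))\ge 2\log_2\d_{\Gamma_\lambda}(u,v)-O(1)$, so the orbit map $h\mapsto(hb,0)$ is metrically proper. For $\mu\ne\lambda$ take $R_\mu=\Gamma(H_\mu,H_\mu)$. Then $A=([R_1],\dots,[R_n])$ is hyperbolic, so $S:=\Ind_X(A)$ is a hyperbolic extension of $A$; fix a coarsely $H_\lambda$-equivariant quasi-isometric embedding $f\colon R_\lambda\to S$ with constant $C$. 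Since each $R_\mu$ ($\mu\ne\lambda$) has diameter $1$, every $X$-edge and every $\mathcal H$-edge of $\G$ labeled in some $H_\mu$ with $\mu\ne\lambda$ contributes at most $1$ to $\d_S$; reading off a $\dl$-geodesic (which uses no $H_\lambda$-edges) yields $\d_S(1,h)\le\dl(1,h)$ for all $h\in H_\lambda$, where $1,h$ denote the corresponding vertices of $\Gamma(G,X)\subseteq S$. Combining with coarse equivariance, the quasi-isometry inequality for $f$, and $G$-invariance of $\d_S$,
\[
\d_{R_\lambda}(b,hb)\ \le\ C\,\d_S\big(f(b),f(hb)\big)+C\ \le\ C\big(2\,\d_S(f(b),1)+\d_S(1,h)+C\big)+C\ \le\ C\,\dl(1,h)+C_2,
\]
where $C_2$ depends only on $f$. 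Hence $\dl(1,h)\le N$ forces $2\log_2\d_{\Gamma_\lambda}(b,hb)\le CN+C_2+O(1)$, i.e. $h$ lies in a fixed finite $\d_{H_\lambda}$-ball; so $\{h\in H_\lambda:\dl(1,h)\le N\}$ is finite and $(H_\lambda,\dl)$ is proper. With $\G$ hyperbolic, this gives $\Hi\h(G,X)$.

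\textbf{Expected main obstacle.} The delicate point is the hyperbolicity half of (a): one must fill loops of $S$ by boundedly many bounded-diameter loops even though an $\mathcal H$-edge of $\G$ (length $1$) corresponds in $S$ to an arbitrarily long $R_i$-geodesic. The key is to exploit the \emph{strong} boundedness of the relative presentation (which bounds the $R_i$-length of the $\mathcal H$-edges occurring in $\mathcal R$-cells and in the $\mathcal R$-adjacent part of each $\mathcal S$-region's boundary) together with the device of filling only the \emph{outer} boundary of each maximal $\mathcal S$-region using hyperbolicity of $R_i$ — thereby never needing to control the unbounded number of individual $\mathcal S$-cells. Keeping the homological bookkeeping consistent across the two straightening operations is the remaining technical nuisance; a direct thin-triangles argument via Lemmas \ref{ngon} and \ref{Ols} is a plausible alternative but appears no less fiddly.
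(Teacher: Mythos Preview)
Your proof is correct and, for part (a), follows essentially the same route as the paper: extension via Corollary \ref{he->au} and Lemma \ref{inc->ext}, then hyperbolicity via the isoperimetric criterion (Proposition \ref{IP}) and a van Kampen diagram over the strongly bounded presentation from Theorem \ref{ipchar}. The paper handles the $\mathcal S$-cells slightly differently: rather than grouping them into maximal regions, it imposes a minimality condition on the diagram guaranteeing that no two $\mathcal S$-cells share an edge, so every internal edge lies on an $\mathcal R$-cell and hence carries a label from the finite set $X\cup Y$. This lets the paper lift internal edges directly into $\Gamma(G,X)\subseteq S$ and bound $\sum_i\ell(s_i)$ by twice the number of internal edges plus $\ell(c)$. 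Your region-grouping achieves the same linear bound without the minimality trick; your extra ``straighten'' step is harmless but unnecessary.

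Part (b) is where you genuinely diverge from the paper, and your argument is more elementary. The paper takes horoballs over Cayley graphs of finitely generated overgroups $K_i\supseteq H_i$ for \emph{all} $i$ simultaneously, assumes properness of $\dl$ fails, and derives a contradiction via Lemma \ref{qg} and the polygon lemma (Lemma \ref{Ols}) applied inside the hyperbolic space $S$. You instead take the horoball only over the one $H_\lambda$ under test and keep the other $R_\mu$ bounded; this asymmetry is what gives you the clean inequality $\d_S(1,h)\le\dl(1,h)$ by simply reading a $\dl$-geodesic into $S$, after which the quasi-isometric embedding hypothesis and the logarithmic distortion of the horoball yield properness directly, with no appeal to Lemma \ref{Ols}. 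One small point: your sentence ``realize $\d_{H_\lambda}$ by a connected graph $\Gamma_\lambda$ whose orbit of a base vertex is isometric to $(H_\lambda,\d_{H_\lambda})$'' needs a word of justification (an arbitrary proper integer metric need not be a graph metric); the quickest fix is to do what the paper does and take $\Gamma_\lambda=\Gamma(K_\lambda,Z_\lambda)$ for a finitely generated $K_\lambda\supseteq H_\lambda$, which immediately gives a connected graph with free vertex action and proper orbit map --- and that is all your inequality chain actually uses.
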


\begin{proof}
(a) That $\Ind_X(A)$ is an extension of $A$ follows from Corollary \ref{he->au} and Lemma \ref{inc->ext}. Thus we only need to prove hyperbolicity. The proof is fairly standard; it essentially repeats the proof of \cite[Lemma 6.45]{DGO} with obvious adjustments.

For details about van Kampen diagrams, isoperimetric functions, etc., we refer to Section 2.2 and \cite{DGO}. Given a (combinatorial) path $p$ in a van Kampen diagram $\Delta $ over a group presentation, we denote by $\Lab (p)$ the label of $p$.

Theorem \ref{ipchar} provides us with a strongly bounded relative presentation
\begin{equation}
G=\langle X, \mathcal H  \mid  \mathcal S\cup \mathcal R\rangle
\label{rp2}
\end{equation}
with linear relative isoperimetric  function.

Let $\mathcal A=\{H_1\curvearrowright R_1,\ldots,H_n\curvearrowright R_n\}$, where for each $i$ we have $H_i\curvearrowright R_i\in A_i$, $R_i$ is a hyperbolic graph, and the action of $H_i$ restricted to the vertex set of $R_i$ is free. We fix some transversal $\mathcal T$ and collection of base points $\mathcal B$ as in Section 3.3 and let $G\curvearrowright S=\Ind_{X,\mathcal T,\mathcal B}(\mathcal A)\in \Ind_{X}(A)$. In what follows we naturally think of $\Gamma (G,X)$ as a subgraph of both $\Gamma (G, X\sqcup \mathcal H)$ and $S$.

\begin{figure}
  % Requires \usepackage{graphicx}
  \centering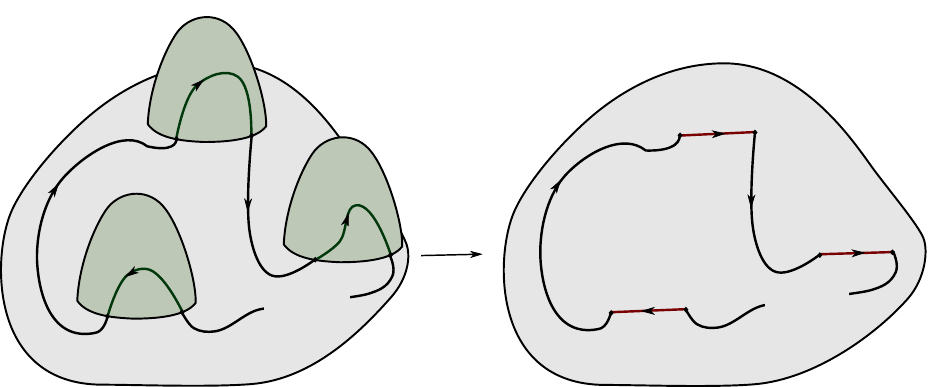\\
  \caption{Constructing the loop $c^\prime$}
\end{figure}

We will show that condition (b) from Proposition \ref{IP} holds for $S$. Let $c$ be a loop in $S$. Without loss of generality we can assume that $c$ has at least one vertex in $\Gamma (G,X)$ as otherwise it is contained in a copy of a hyperbolic graph $R_i$ and the isoperimetric inequality follows. (Notice that we need finiteness of the collection $\Hi$ here to ensure uniformness of the isoperimetric constants.)

To every such $c$ we associate a loop in $\G$  as follows. Let $b_1, \ldots , b_k$ be the set of all maximal subpaths of $c$ such that each $b_i$ belongs to some $\{ g_iH_{j(i)}\} \times R_{j(i)}$. We replace each $b_i$ with the corresponding edge $e_i$ in $\G$ connecting $(b_i)_-$ to $(b_i)_+$ and labeled by an element of $H_{j(i)}$. This naturally defines a loop $c^\prime $ in $\G$.

Consider a van Kampen diagram $\Delta $ over (\ref{rp2}) such that:
\begin{enumerate}
\item[1)] The boundary label of $\Delta $ is $\Lab (c^\prime)$.
\item[2)] $\Delta $ has minimal number of $\mathcal R$-cells among all diagrams satisfying 1).
\item[3)] Every edge of $\partial \Delta $ labeled by a letter from $\mathcal H$ belongs to an $\mathcal S$-cell.
\item[4)] $\Delta $ has minimal number of $\mathcal S$-cells among all diagrams satisfying 1)--3).
\end{enumerate}
Note that we can always ensure 3) by gluing cells labeled by $hh^{-1}$ where $h\in \mathcal H$ (the so-called $0$-cells) to the boundary of $\Delta$. In what follows we identify $\partial \Delta $ with $c^\prime$.

The maps $e_i\mapsto b_i$ naturally induce a continuous map $\phi $ from $c^\prime$ to $S$ whose image is $c$. Observe that 4) implies that no edge of $\Delta $ can belong to two $\mathcal S$-cells, for otherwise these $\mathcal S$-cells could be replaced with a single cell. Thus every internal edge $e$ of $\Delta $ belongs to an $\mathcal R$-cell and hence $\Lab (e)\in X\cup Y$, where $Y$ is the set of all letters from $\mathcal H$ that appear in relations $R\in \mathcal R$. Since (\ref{rp2}) is strongly bounded, $Y$ is finite. By Proposition \ref{prop-ind} and Lemma \ref{rhX}, we can assume without loss of generality that for every $y\in Y$ there exists $x_y\in X$ such that $x_y$ and $y$ represent the same element of $G$. This allows us to extend $\phi $ to the $1$-skeleton of $\Delta $ by mapping every internal edge $e$ of $\Delta $ to the corresponding edge of $\Gamma (G, X)\subseteq S$ (edges labeled by $y\in Y$ are mapped to the corresponding edges labeled by $x_y\in X$).

Let $f(n)=Cn$ be a relative isoperimetric function of (\ref{rp2}) and let $M=\max_{R\in \mathcal R} \| R\| $. Note that $M<\infty $ as (\ref{rp2}) is bounded.  The map $\phi\colon Sk^{(1)} (\Delta )\to S$ gives rise to a decomposition of $[c]$ into the sum of at most $C\ell (c)$ homotopy classes of loops of length at most $M$ corresponding to $\mathcal R$-cells of $\Delta $ (here we use the fact that no $e_i$ belongs to the boundary of an $\mathcal R$-cell, which is ensured by 3)) plus $[s_1]+\cdots + [s_m]$, where all $s_i$ are images of boundaries of $\mathcal S$-cells of $\Delta $. Clearly the total length of all loops $[s_i]$ does not exceed $2$ times the total number of internal edges of $\Delta $ plus $\ell (c)$.  Again taking into account that every internal edge $e$ of $\Delta $ belongs to an $\mathcal R$-cell and using the isoperimetric inequality we obtain
\begin{equation}\label{tlsi}
\sum\limits_{i=1}^m \ell (s_i)\le 2MC\ell (c^\prime )+\ell (c)\le (2MC+1)\ell (c).
\end{equation}

Note that every $s_i$ is a loop in $\{ g_iH_{j(i)}\} \times R_{j(i)}$ for some $g_i\in G$ and $j(i)\in \{ 1, \ldots, n\}$, which is an isometric copy of the (hyperbolic) graph $R_{j(i)}$. Therefore there exist constants $A,B$ such that every $[s_i]$ decomposes into the sum of at most $A\ell (s_i)$ homotopy classes of loops of length at most $B$. Consequently $[c]$ decomposes into the sum of at most $(C+A(2MC+1))\ell(c)$ homotopy classes of loops of length at most $\max \{ M, B\}$.  This completes the proof of (a).

(b)Applying Proposition \ref{proper} to the collection $A=(A_1, \ldots, A_n)$, where $A_i=[H_i\curvearrowright \Gamma (H_i, H_i)]$, we obtain that $\Ind_X(A)=[G\curvearrowright\G]$. It follows from our assumption that the relative Cayley graph $\G$ is hyperbolic. Thus it remains to verify condition (b) from Definition \ref{he-def}.

Arguing by contradiction, assume that condition (b) from the definition of a hyperbolically embedded collection of subgroups does not hold. We are going to construct a specific collection of actions of subgroups $H_i$ on hyperbolic spaces such that the corresponding induced action of $G$ is not a hyperbolic extension.

By the Higman-Neumann-Neumann theorem, every countable group embeds in a finitely generated group. We embed $H_i$ into a finitely generated group $K_i$. Let $Z_i$ be a finite generating set of $K_i$ and let $|\cdot |_{Z_i}$ denote the corresponding word length. That is, for some $j\in \{ 1, \ldots, n\}$, some $C\in \mathbb N$, and every $N\in \mathbb N$, there exist a path $q$ of length at most $C$ in $$\Delta_j = \G \setminus E(\Gamma (H_j, H_j))$$ connecting $1$ to a vertex $h\in H_j$ such that $|h|_{Z_j}\ge N$. Let $R_i= \mathcal H(\Gamma (K_i,Z_i))$ for all $i$ (see Definition \ref{GMhoro}) and let $\mathcal A=(H_1\curvearrowright R_1, \ldots, H_n\curvearrowright R_n)$.

We fix some transversal $\mathcal T$ and a collection of base points $\mathcal B$ as in Section 3.3. By Lemma \ref{GM}, all $R_i$ are hyperbolic. Therefore, so is the space $S=S_{X,\mathcal T,\mathcal B, \mathcal A}$ of the induced action.
Let $u =qr$, where $r$ is the edge of $\G $ labeled by $h^{-1}\in H_j$ and connecting $h$ to $1$. Then $r$ is an isolated $H_j$-component of $u$. Let $v$ be a loop in $S$ obtained from $u$ by replacing all $H_i$-components of $u$ (for all $i$) with geodesics in corresponding graphs $\{ gH_i\} \times R_i$. We call these subpaths of $v$ \emph{$R_i$-components}. Since $\Ind_X(A)$ is an extension of $A$, geodesics in the graphs $\{ gH_i\} \times R_i$ are $(\lambda, c)$-quasi-geodesics  in $S$ for some $\lambda\ge 1$, $c>0$.

Let $s$ denote the subpath of $v$ corresponding to the $H_j$-component $r$ of $u$; thus $s$ is a geodesic in $\{1H_j\} \times R_j$ and therefore it is a $(\lambda,c)$-quasi-geodesic in $S$. If $N$ is large enough compared to $C$, the hyperbolicity constant $\delta$ of $S$, and the constant $\kappa =\kappa (\delta, \lambda, c)$ provided by Lemma \ref{qg}, the combination of Lemma \ref{qg} and Lemma \ref{Ols} provide us with a subpath $s_0$ of $s$ and a subpath $t_0$ of some other $R_i$-component $t$ of $v$ such that $s_0$ and $t_0$ belong to $(2\kappa +15\delta)$-neighborhoods of each other and $\ell (s_0) \ge 2(2\kappa +15\delta) +4$. Note that $t$ cannot belong to $\{1H_j\} \times R_j$ as $r$ is an isolated $H_j$-component of $u$. Since $s_0$ is a geodesic in $\{ 1H_j\} \times R_j$, which is an isometric copy of the combinatorial horoball $R_j$, it must contain a vertical subsegment of length greater than $2\kappa +15\delta$ by Lemma \ref{GM}. Thus $s_0$ cannot belong to the closed $(2\kappa +15\delta)$-neighborhood of $\Gamma (G,X)$. This contradicts the fact that $s_0$ belongs to a $(2\kappa +15\delta)$-neighborhood of $t_0$. Indeed $t_0$ belongs to some $\{gH_i\}\times R_i\ne \{ 1H_j\} \times R_j$ and every path in $S$ originating in $\{ 1H_j\} \times R_j$ and terminating in $\{gH_i\}\times R_i$ must intersect $\Gamma(G,X)$. This contradiction completes the proof of part (b) of Definition \ref{he-def} and the theorem.
\end{proof}

The following immediate corollary is a generalization of Theorem \ref{main2}.

\begin{cor}\label{cor-he}
Let $G$ be a group, $\Hi$ a collection of hyperbolically embedded subgroups of $G$. Then the extension problem for $\Hi$ and $G$ is solvable.
\end{cor}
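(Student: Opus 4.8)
The plan is to read this off directly from Theorem~\ref{he}(a). Since $\Hi$ is a hyperbolically embedded collection, Definition~\ref{he-def} supplies a relative generating set $X$ of $G$ with respect to $\Hi$ for which $\Hi\h(G,X)$. It then remains to show that an arbitrary collection of (isometric) actions $\{H_i\curvearrowright R_i\}_{i=1}^n$ admits an extension to a $G$-action on some metric space.

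First I would normalize the input. By Proposition~\ref{geo}, each action $H_i\curvearrowright R_i$ extends to an action $H_i\curvearrowright R_i'$ on a \emph{geodesic} space $R_i'$, via a coarsely $H_i$-equivariant quasi-isometric embedding $R_i\to R_i'$; this produces an element $A=([H_1\curvearrowright R_1'],\dots,[H_n\curvearrowright R_n'])\in\AHi$. Then I would apply Theorem~\ref{he}(a) to $A$ with this $X$: the induced action $\Ind_X(A)$ is an extension of $A$, so any $G\curvearrowright S$ representing $\Ind_X(A)$ comes equipped, for each $i$, with a coarsely $H_i$-equivariant quasi-isometric embedding $R_i'\to S$. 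Composing $R_i\to R_i'\to S$ and using that a composition of two coarsely $H_i$-equivariant quasi-isometric embeddings is again such a map, one obtains coarsely $H_i$-equivariant quasi-isometric embeddings $R_i\to S$ for every $i$; hence $G\curvearrowright S$ is an extension of $\{H_i\curvearrowright R_i\}$. Since the collection was arbitrary, the extension problem for $\Hi$ and $G$ is solvable.

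There is no real obstacle here: all the substance has already been carried out in Theorem~\ref{he}(a) (which itself rests on Corollary~\ref{he->au}, asserting that hyperbolically embedded collections are incompressible, together with Lemma~\ref{inc->ext}). The only bookkeeping points are the passage from concrete actions to equivalence classes in $\AHi$ --- legitimate by Proposition~\ref{AABB} and the stability of quasi-isometric embeddings under composition --- and the implicit fact that distinct subgroups of a hyperbolically embedded collection intersect in finite groups, which is what makes the freely chosen equivalence classes $A_i$ automatically compatible on the intersections (cf.\ Remark~\ref{onesubgr}) and is already encoded in the hypothesis $\Hi\h(G,X)$. Note that the hyperbolicity assertion of Theorem~\ref{he}(a) plays no role in this corollary; only its bare extendability half is invoked.
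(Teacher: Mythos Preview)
Your proposal is correct and matches the paper's own treatment: the paper states this as an ``immediate corollary'' of Theorem~\ref{he} without further proof, and your argument is precisely the unpacking of that immediacy (invoke Definition~\ref{he-def} to obtain $X$, normalize via Proposition~\ref{geo}, apply Theorem~\ref{he}(a), compose the resulting embeddings). One small comment: your final parenthetical about finite intersections and compatibility in the sense of Remark~\ref{onesubgr} is superfluous --- Theorem~\ref{he}(a) asserts outright that $\Ind_X(A)$ is an extension for \emph{every} $A\in\AHi$, so no compatibility check is needed and that fact plays no role here.
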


Yet another corollary follows immediately from Theorem \ref{he} and Proposition \ref{relhypdef}.

\begin{cor}
Let $G$ be a group, $\Hi$ a collection of subgroups of $G$.
\begin{enumerate}
\item[(a)] Suppose that $G$ is hyperbolic relative to $\Hi$. Then for every collection (\ref{a1an}), the induced action $\Ind(A)$ is an extension of $A$ and if each $A_i$ is hyperbolic, then so is $\Ind(A)$.
\item[(b)] Conversely, suppose that $G$ is finitely generated relative to $\Hi$, the subgroups $H_1, \ldots, H_n$  are countable, and for every collection (\ref{a1an}), where each $A_i$ is hyperbolic,  $\Ind(A)$ is a hyperbolic extension of $A$. Then $G$ is hyperbolic relative to $\Hi$.
\end{enumerate}
\end{cor}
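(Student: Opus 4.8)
The plan is to deduce both parts directly from Theorem~\ref{he} and Proposition~\ref{relhypdef}, the latter being the dictionary that turns relative hyperbolicity of $G$ with respect to $\Hi$ into hyperbolic embeddedness $\Hi\h(G,X)$ for some \emph{finite} relative generating set $X$.

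For part (a), I would start by applying Proposition~\ref{relhypdef} to produce a finite set $X\subseteq G$ with $\Hi\h(G,X)$. By condition (a) of Definition~\ref{he-def}, $X$ is then a relative generating set of $G$ modulo $\Hi$, so $G$ is finitely generated relative to $\Hi$; consequently the unadorned map $\Ind$ is defined and, by the independence statement recorded after Proposition~\ref{prop-ind}, satisfies $\Ind(A)=\Ind_X(A)$ for every collection $A$ as in~(\ref{a1an}). Now Theorem~\ref{he}(a), applied with this $X$, says precisely that $\Ind_X(A)$ is an extension of $A$, and that it is hyperbolic as soon as each $A_i$ is; rewriting $\Ind_X$ as $\Ind$ finishes part (a).

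For part (b), the hypotheses already include that $G$ is finitely generated relative to $\Hi$, so I would fix any finite relative generating set $X$; again $\Ind=\Ind_X$. The assumption that $\Ind(A)$ is a hyperbolic extension of $A$ for every collection~(\ref{a1an}) with all $A_i$ hyperbolic therefore becomes exactly the hypothesis of Theorem~\ref{he}(b) for this $X$, and since $H_1,\dots,H_n$ are assumed countable, Theorem~\ref{he}(b) yields $\Hi\h(G,X)$. As $X$ is finite, Proposition~\ref{relhypdef} then gives that $G$ is hyperbolic relative to $\Hi$, as required.

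Since the statement is a formal consequence of two results already in hand, there is no genuine obstacle; the only things to be careful about are the minor bookkeeping points that the finite set $X$ furnished by Proposition~\ref{relhypdef} is a relative generating set (so that $\Ind$ makes sense in part (a)) and that $\Ind$ and $\Ind_X$ coincide for finite $X$, both of which are immediate from the definitions in Section~3.3.
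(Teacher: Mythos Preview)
Your proposal is correct and follows exactly the route the paper takes: the corollary is stated there as an immediate consequence of Theorem~\ref{he} and Proposition~\ref{relhypdef}, with no further argument given. Your write-up simply unpacks the bookkeeping (finiteness of $X$, and the identification $\Ind=\Ind_X$ for finite $X$) that the paper leaves implicit.
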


We consider a couple of examples illustrating that part (b) of Theorem \ref{main4} can fail under certain weaker assumptions.

\begin{ex}\label{ex-c}
Let $H=Sym(\mathbb N)$ and let $G=H\times \mathbb Z/2\mathbb Z$. As we already mentioned, all actions of $H$ on metric spaces have bounded orbits. This easily implies that for every $A\in \AH$, $\Ind(A)$ is an extension of $A$. In addition, if $[H\curvearrowright R]$ is hyperbolic then the space of the induced action is quasi-isometric to two copies of $R$ glued along a bounded subset; in particular, this space is also hyperbolic. Thus all assumptions of part (b) of Theorem \ref{he} are satisfied except countability of $H$. However the conclusion fails: $G$ is not hyperbolic relative to $H$ as peripheral subgroups of relatively hyperbolic groups must be almost malnormal.
\end{ex}

\begin{ex}\label{ex-d}
Let $G=\mathbb Z \times \mathbb Z/2\mathbb Z$. Then $H=\mathbb Z \times \{ 1\}\le G$ is a retract of $G$, hence every action of $H$ on a hyperbolic space extends to an action of $G$ on the same hyperbolic space, see Example  \ref{ex2} (b). However $G$ is not hyperbolic relative to $H$. This shows that the condition that $\Ind (A)$ is a hyperbolic extension of $A$ for every hyperbolic $A\in \AH$ cannot be replaced with the assumption that every action of $H$ on a hyperbolic space extends to an action of $G$ on a hyperbolic space. More specifically, when the action $A$ is of $\mathbb Z$ on its combinatorial horoball, the action $\Ind(A)$ is on the Cayley graph of $G$ where we attach a combinatorial horoball onto each coset of $\mathbb Z$. This space is not hyperbolic.
\end{ex}

We now turn to the proof of Corollary \ref{tfhyp} from the introduction. Recall that two elements $a, b$ of infinite order of a group $H$ are called \emph{commensurable} in $H$ if some non-trivial powers of $a$ and $b$ are conjugate in $H$. We will need the following.

\begin{defn}
We say that a group embedding $H\le G$ is \emph{commensurability preserving} if infinite order elements of $H$ are commensurable in $H$ whenever they are commensurable in $G$.
\end{defn}

\begin{ex}
If $H$ is almost malnormal in $G$, then the embedding $H\le G$ is commensurability preserving. Indeed suppose that $a, b\in H$ are infinite order elements and are commensurable in $G$. Then there exists $t\in G$ and $m,n\in \mathbb N$ such that $t^{-1}a^mt=b^{\pm n}$. In particular, the intersection $H\cap t^{-1}Ht$ contains $\langle b^n\rangle$ and therefore it is infinite. By almost malnormality, we get $t\in H$, which means that $a$ and $b$ are commensurable in $H$. Note, however, that malnormality is a strictly stronger condition. (Hint: consider the embedding $2\mathbb Z\le \mathbb Z$.)
\end{ex}

We will need two more lemmas.

\begin{lem}\label{lemhyp}
Let $H$ be a subgroup of a hyperbolic group $G$ and let $a,b\in H$ be two non-commensurable (in $H$) elements of infinite order. Then there exists an action of $H$ on a metric space such that the orbits of $\langle a\rangle $ are bounded while the orbits of $\langle b\rangle $ are unbounded.
\end{lem}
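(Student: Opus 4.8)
The plan is to build the required $H$--action inside the Cayley graph of $G$, by coning off the $H$--orbit of a quasi-axis of $a$. Since $G$ is hyperbolic and $a$ has infinite order, let $E=E_G(a)$ be the maximal virtually cyclic subgroup of $G$ containing $a$; recall that $\langle a\rangle$ has finite index in $E$, that $E$ is quasiconvex, and that $E\h G$, so that distinct left cosets of $E$ have uniformly bounded coarse intersection in $\Gamma=\Gamma (G,X)$ for a finite generating set $X$ (this is the geometric content of near--malnormality of $E$). Fix a quasi-axis $\alpha_a=\langle a\rangle\cdot 1\subseteq E$ of $a$, and let $\widehat\Gamma$ be obtained from $\Gamma$ by coning off the $H$--invariant family of left cosets $\{hE:h\in H\}$ (attach to each such coset a new vertex joined by an edge of length $1$ to every point of that coset). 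Then $H$ acts isometrically on the connected metric space $\widehat\Gamma$, and this is the action I propose.

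First I would check that $\langle a\rangle$ is elliptic. Since $\langle a\rangle\cdot 1=\alpha_a\subseteq E=1\cdot E$, and the coset $E$ has diameter at most $2$ in $\widehat\Gamma$, the orbit of the base vertex $1$ under $\langle a\rangle$ is bounded; because $\widehat\Gamma$ is a genuine (finite--valued, connected) metric space on which $H$ acts by isometries, one bounded orbit forces all orbits of $\langle a\rangle$ (in fact of $H\cap E$) to be bounded.

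Next, to see that $\langle b\rangle$ has unbounded orbits: $b$ has infinite order in the hyperbolic group $G$, hence is loxodromic on $\Gamma$ with a quasi-axis $\alpha_b$, and I claim $\alpha_b$ stays within bounded distance of no coned coset. If $a$ and $b$ are not commensurable in $G$, then $\alpha_b$ is not within bounded Hausdorff distance of $g\alpha_a$ for any $g\in G$, hence not of any coned coset. If instead $t^{-1}b^pt=a^q$ with $p,q\neq 0$ and $t\in G$, then $\alpha_b$ lies within bounded distance of $t\alpha_a\subseteq tE$, and here the hypothesis that $a,b$ are non--commensurable \emph{in $H$} enters through the following observation: $t\notin H\cdot E$. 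Indeed, if $t=he$ with $h\in H$, $e\in E$, then $a^q=t^{-1}b^pt=e^{-1}h^{-1}b^ph\,e$, so $h^{-1}b^ph=ea^qe^{-1}\in E$, and this element lies in $H$; being of infinite order in the virtually cyclic group $E$ it satisfies $(h^{-1}b^ph)^m=a^r$ for some $m,r\neq 0$, an identity which holds in $H$ since all four elements do, whence $b^{pm}$ is conjugate in $H$ to $a^r$ — contradicting non--commensurability in $H$. Consequently $tE$ is not one of the cosets $\{hE\}$, and as distinct cosets of $E$ have bounded coarse intersection, $tE$, and therefore $\alpha_b$, lies within bounded distance of no coned coset. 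In either case, since coning off a uniformly quasiconvex, uniformly separated family of subsets of a hyperbolic space yields a hyperbolic space in which a quasi-geodesic line transverse to the family remains quasi-geodesic, $\alpha_b$ is still a quasi-axis in $\widehat\Gamma$; so $b$ is loxodromic on $\widehat\Gamma$ and $\langle b\rangle$ has unbounded orbits, as required.

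The main obstacle is the last geometric assertion. One cones off only the $H$--orbit of $E$, not the full $G$--orbit, so the statement $E\h G$ (which concerns coning off all cosets of $E$) cannot be applied directly; instead one must quote, or reprove, the more basic fact that coning off any uniformly quasiconvex family of subsets of a hyperbolic space with uniformly bounded pairwise coarse intersections produces a hyperbolic space in which transverse geodesics stay uniformly quasi-geodesic — the uniform separation of $\{hE:h\in H\}$ being inherited from that of the full family $\{gE:g\in G\}$. Everything else (the algebraic step $t\notin H\cdot E$, ellipticity of $\langle a\rangle$, loxodromicity of $b$ on $\Gamma$) is elementary.
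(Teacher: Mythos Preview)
Your approach is correct in outline and genuinely different from the paper's. The paper does not build the space by hand: it invokes \cite[Theorem 6.8]{DGO} applied to the action of $H$ on the Cayley graph of $G$, which (since this action is proper, hence acylindrical, and since non-commensurability in $H$ forces $a,b$ to have distinct fixed-point pairs on $\partial G$) yields virtually cyclic subgroups $E(a),E(b)\le H$ with $\{E(a),E(b)\}\h(H,X)$ for some $X\subseteq H$. It then feeds the trivial action of $E(a)$ and a geometric action of $E(b)$ into the paper's own extension machinery (Theorem~\ref{he}) to produce the desired $H$--action. So the paper's proof is a two-line application of black boxes already developed in the paper, whereas you construct the metric space explicitly via a partial cone-off of $\Gamma(G,X)$.

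Your route has the virtue of being concrete and of avoiding the induced-action formalism, but the ``main obstacle'' you flag is real and your stated general principle is a bit imprecise as written. Two remarks sharpen it. First, you do not need $\widehat\Gamma$ to be hyperbolic at all --- the lemma only asks for unbounded $\langle b\rangle$--orbits --- so that part of the cited principle is irrelevant. Second, the assertion ``a quasi-geodesic line transverse to the family remains quasi-geodesic'' needs care precisely in your Case~2: there $\alpha_b$ \emph{does} fellow-travel a coset, namely $tE$, which simply happens not to be in the coned family. The clean way to finish is a projection argument: the nearest-point projection $\pi\colon\Gamma\to tE$ sends each coned coset $hE\ne tE$ to a uniformly bounded set (this is exactly the bounded-coarse-intersection hypothesis), and $\Gamma$--edges to bounded sets, so any path of $\widehat\Gamma$--length $\le K$ moves a uniformly bounded amount under $\pi$; since $\pi(b^n)\to\infty$ along $tE$, the $\langle b\rangle$--orbit is unbounded. (In Case~1 you can shortcut even further: the $1$--Lipschitz map $\widehat\Gamma\to\Gamma(G,X\sqcup E)$ coning off \emph{all} cosets shows unboundedness, since $b$ is loxodromic there.) With this in place your argument is complete; the algebraic step $t\notin H\cdot E$ is correct as you wrote it.
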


\begin{proof}
By \cite[Theorem 6.8]{DGO} applied to the action of $H$ on the Cayley graph of $G$ with respect to a finite generating set, $a$ and $b$ are contained in virtually cyclic subgroups $E(a)$, $E(b)$ of $H$ such that $\{ E(a), E(b)\} \h (H, X)$ for some $X\subseteq H$. Let $A\in \mathcal A(E(a))$ be the equivalence class of a geometric action of $E(a)$ and let $B\in \mathcal A(E(b))$ be the the equivalence class of the trivial action on the point. By Theorem \ref{he}, there is an extension of the pair $(A, B)$ to an $H$--action $C\in \AH$. Clearly $C$ satisfies the required conditions.
\end{proof}

\begin{lem}\label{vc}
Let $G$ be a virtually cyclic group. Then the extension problem is solvable for all subgroups of $G$.
\end{lem}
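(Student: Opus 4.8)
The plan is to derive this from Theorem \ref{main1}. First I would dispose of the trivial case: if $G$ is finite then every subgroup $H\le G$ is finite, so the extension problem for $H\le G$ is solvable by Example \ref{ex2}(a). So assume $G$ is infinite. Being virtually cyclic and infinite, $G$ has a cyclic subgroup of finite index, which is necessarily infinite and hence isomorphic to $\mathbb Z$; replacing it by its normal core in $G$ (the intersection of its finitely many conjugates), which is again of finite index in $G$ and, as a finite-index subgroup of $\mathbb Z$, again isomorphic to $\mathbb Z$, I obtain a subgroup $N\lhd G$ with $N\cong\mathbb Z$ and $[G:N]<\infty$.

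Next I would verify that $G$ and $N$ satisfy the hypotheses of Theorem \ref{main1}. The group $G$ is finitely generated because it contains the finitely generated finite-index subgroup $N$; and $G$ is elementary amenable, being an extension of the finite group $G/N$ by the abelian group $N$ and using closure of $EG$ under extensions. Moreover $N$ is a finite-index free abelian subgroup of $G$, and the conjugation homomorphism $G\to {\rm Aut}(N)$ has image in ${\rm Aut}(\mathbb Z)\cong\mathbb Z/2\mathbb Z$, whose nontrivial element is inversion; thus the conjugation action of $G$ on $N$ factors through the action of $\mathbb Z/2\mathbb Z$ by inversion. Theorem \ref{main1} now gives that the extension problem is solvable for all subgroups of $G$, which is the desired conclusion.

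There is essentially no obstacle here: the only ingredients are standard properties of virtually cyclic groups (finite generation, elementary amenability, existence of an infinite cyclic finite-index normal subgroup) together with the fact that $\mathbb Z$ has a unique nontrivial automorphism. One should, however, observe that this is not circular, since the proof of Theorem \ref{main1} in Section 4.1 does not invoke Lemma \ref{vc}. (If one preferred to avoid Theorem \ref{main1}, one could instead reduce via Lemma \ref{finind} to the case of a finite-index $H\cong\mathbb Z$ and check incompressibility of $H$ in $G$ directly using Theorem \ref{main3}, but the argument above is shorter.)
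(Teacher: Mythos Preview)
Your proof is correct and follows essentially the same approach as the paper: reduce to the infinite case, find a finite-index normal subgroup $A\cong\mathbb Z$, and apply Theorem \ref{main1} using that ${\rm Aut}(\mathbb Z)\cong\mathbb Z/2\mathbb Z$. Your version is simply more explicit about the details (existence of the normal cyclic subgroup via the normal core, finite generation, elementary amenability, non-circularity) that the paper leaves to the reader.
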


\begin{proof}
Without loss of generality we can assume that $G$ is infinite. Let $A\cong \mathbb Z$  be a normal cyclic subgroup of finite index in $G$. It is clear that Theorem \ref{main1} applies to $G$ since $Aut(\mathbb Z)\cong \mathbb Z/2\mathbb Z$. Thus the extension problem is solvable for all subgroups of $G$.
\end{proof}

Corollary \ref{tfhyp} is a simplified version of the following.

\begin{cor}\label{hyp}
Let $G$ be a hyperbolic group.
\begin{enumerate}
\item[(a)] Suppose that $H$ is quasiconvex in $G$ and either virtually cyclic or almost malnormal. Then the extension problem is solvable for $H\le G$.
\item[(b)] Conversely, if the extension problem is solvable for a subgroup $H\le G$, then $H$ is quasiconvex and the embedding $H\le G$ is commensurability preserving.
\end{enumerate}
\end{cor}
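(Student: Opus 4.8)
The plan is to treat the two parts separately, reducing part~(a) to the solvability results already established for hyperbolically embedded subgroups. A preliminary observation I will use repeatedly is that solvability of the extension problem is \emph{transitive}: if $H\le K\le G$, the extension problem is solvable for $H\le K$, and it is solvable for $K\le G$, then it is solvable for $H\le G$. Indeed, given an action $H\curvearrowright R$, first extend it to an action $K\curvearrowright S$ via a coarsely $H$--equivariant quasi-isometric embedding $R\to S$, then extend $K\curvearrowright S$ to an action $G\curvearrowright T$ via a coarsely $K$--equivariant quasi-isometric embedding $S\to T$. The composition $R\to T$ is a quasi-isometric embedding, and it is coarsely $H$--equivariant because a coarsely $K$--equivariant map is a fortiori coarsely $H$--equivariant and post-composing a coarsely $H$--equivariant map by a Lipschitz map changes the equivariance defect only by a bounded amount.

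For part~(a), if $H$ is finite the trivial action of $G$ on any $H$--space is an extension (Example~\ref{ex2}(a)), so we may assume $H$ is infinite. Suppose first that $H$ is quasiconvex and virtually cyclic. Fix $g\in H$ of infinite order and let $E=E_G(g)$ be the maximal virtually cyclic subgroup of $G$ containing $g$; any virtually cyclic subgroup of $G$ containing $g$ is contained in $E$, so $H\le E$. Applying \cite[Theorem~6.8]{DGO} to the (acylindrical) action of $G$ on a Cayley graph of $G$ with respect to a finite generating set, in which $g$ is loxodromic, we get $E\h G$, so the extension problem is solvable for $E\le G$ by Corollary~\ref{cor-he}. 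Since $E$ is virtually cyclic, the extension problem is solvable for $H\le E$ by Lemma~\ref{vc}. By transitivity, it is solvable for $H\le G$. Now suppose $H$ is quasiconvex and almost malnormal. Then, as is well known (Bowditch; see also \cite{DGO}), $G$ is hyperbolic relative to $H$, equivalently $\{H\}\h(G,X)$ for some finite $X$ by Proposition~\ref{relhypdef}; hence the extension problem is solvable for $H\le G$ by Corollary~\ref{cor-he}.

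For part~(b), assume the extension problem is solvable for $H\le G$. By Proposition~\ref{undist}(a) the subgroup $H$ is finitely generated, and since the geometric action of $H$ on its own Cayley graph extends to $G$, Proposition~\ref{undist}(b) shows that $H$ is undistorted in $G$. A finitely generated undistorted subgroup of a hyperbolic group is quasiconvex: undistortion says that the inclusion of a Cayley graph of $H$ into a Cayley graph of $G$ is a quasi-isometric embedding, so by stability of quasi-geodesics (Lemma~\ref{qg}) geodesic triangles of $H$ map to uniformly thin configurations and therefore pull back to uniformly thin triangles, whence $H$ is hyperbolic; and then the image of any geodesic of $H$ is a quasi-geodesic in $G$ lying (Lemma~\ref{qg}) uniformly close to a geodesic, so $H$ is quasiconvex. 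Finally, to see that $H\le G$ is commensurability preserving, suppose for contradiction that $a,b\in H$ have infinite order, are commensurable in $G$, but not in $H$. By Lemma~\ref{lemhyp} there is an action of $H$ on a metric space $R$ in which $\langle a\rangle$ has bounded orbits and $\langle b\rangle$ has unbounded orbits; by solvability this extends to an action $G\curvearrowright S$, and a coarsely $H$--equivariant quasi-isometric embedding $R\to S$ carries both properties over to $S$. Commensurability in $G$ gives $m,n\ge 1$ and $t\in G$ with $t\langle a^m\rangle t^{-1}=\langle b^n\rangle$; since $\langle a^m\rangle\le\langle a\rangle$ has bounded orbits in $S$, so does its $G$--conjugate $\langle b^n\rangle$; but $\langle b^n\rangle$ has finite index in $\langle b\rangle$, so each $\langle b\rangle$--orbit is a finite union of translates of bounded $\langle b^n\rangle$--orbits, hence bounded, contradicting the choice of the action. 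This is essentially the argument of Example~\ref{vfree}.

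The main obstacles I anticipate are making the transitivity observation and the identification of the intermediate group $E_G(g)$ precise in the virtually cyclic case (and locating the right external input $E_G(g)\h G$), and, in part~(b), the implication ``undistorted $\Rightarrow$ quasiconvex'' for subgroups of hyperbolic groups; the commensurability argument is routine.
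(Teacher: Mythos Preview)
Your proof is correct and follows essentially the same route as the paper's: for (a) you use Bowditch's result in the almost malnormal case and the chain $H\le E\le G$ with $E\h G$ via \cite[Theorem~6.8]{DGO} together with Lemma~\ref{vc} in the virtually cyclic case; for (b) you use Proposition~\ref{undist} plus the standard equivalence of undistortion and quasiconvexity, and the contradiction via Lemma~\ref{lemhyp}. The only differences are cosmetic: you make the transitivity step explicit (the paper just chains the two extensions without naming the principle), and you spell out the undistorted $\Rightarrow$ quasiconvex implication and the final orbit argument, both of which the paper leaves as ``well-known'' or implicit. One small phrasing issue: in your transitivity sketch, ``post-composing by a Lipschitz map'' alone does not bound the equivariance defect---you also need (and have) that the second map is itself coarsely $H$--equivariant; both ingredients are used in the estimate.
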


\begin{proof}
(a) Assume first that $H$ is quasiconvex and almost malnormal. Then by a result of Bowditch \cite{Bow}, $G$ is hyperbolic relative to $H$ and hence the extension problem for $H\le G$ is solvable by Theorem \ref{EP<->inc}. Now assume that $H$ is virtually cyclic. If $H$ is finite the claim is obvious, so we assume that $H$ is infinite. Then there exists a maximal virtually cyclic subgroup $E$ of $G$ containing $H$. By Lemma \ref{vc}, every $H$-action on a metric space extends to an $E$-action. By \cite[Theorem 6.8]{DGO} we have $E\h G$ and therefore every $E$-action in turn extends to a $G$--action. Thus the extension problem is solvable for $H\le G$ in this case as well.

(b) Assume that the extension problem is solvable for $H$. Then $H$ is finitely generated and undistorted in $G$ by Proposition \ref{undist}. This is well-known to be equivalent to quasiconvexity.

To prove that the embedding $H\le G$ is commensurability preserving we argue by contradiction. Assume that  there are elements $a,b\in H$ of infinite order that are commensurable in $G$ but not in $H$. By Lemma \ref{lemhyp}, there exists an $H$--action such that the orbits of $\langle a\rangle $ are bounded while the orbits of $\langle b\rangle$ are not. Since the extension problem is solvable for $H\le G$, there is an action of $G$ with the same property. However, this contradicts the fact that $a$ and $b$ are commensurable in $G$.
\end{proof}

The following example shows that the sufficient condition for the extension problem to be solvable from part (a) is not necessary.

\begin{ex}\label{HxK}
Let $G=H\times K$, where $K$ is a non-trivial finite group and $H$ is a finitely generated non-cyclic free group. Then $G$ is hyperbolic and $H$ is neither virtually cyclic nor malnormal. However the extension problem for $H\le G$ is solvable since $H$ is a retract of $G$.
\end{ex}

We conclude with few open problems.

\begin{prob}
Is the sufficient condition for the extension problem to be solvable from Corollary \ref{hyp} (a) necessary in case $G$ is torsion free?
\end{prob}

The negative answer can likely be obtained by studying the case of $G=F(a,b)$, the free group with basis $\{ a, b\}$, and  $H=\langle a^2, b^2\rangle \le G$.

\begin{prob}
Is the necessary condition for the extension problem to be solvable from Corollary \ref{hyp} (b) sufficient?
\end{prob}

It would be also interesting to address the extension problem for individual subgroups in other classes of groups.

\begin{prob}
Describe incompressible subgroups of finitely generated nilpotent groups, polycyclic groups, free solvable groups, etc.
\end{prob}

\vspace{1cm}

\noindent \textbf{Carolyn Abbott: } Department of Mathematics, University of California, Berkeley, Berkeley 94720, USA. \\
E-mail: \emph{c\underline{\hspace{.35em}}abbott@math.berkeley.edu}

\bigskip

\noindent \textbf{David Hume: } Mathematical Institute, University of Oxford, Woodstock Road, Oxford
OX2 6GG, UK.\\
E-mail: \emph{david.hume@maths.ox.ac.uk}

\bigskip

\noindent \textbf{Denis Osin: } Department of Mathematics, Vanderbilt University, Nashville 37240, USA.\\
E-mail: \emph{denis.v.osin@vanderbilt.edu}

\end{document}